\documentclass{article}

\usepackage[sumlimits]{amsmath}
\usepackage{amssymb}
\usepackage{amsthm}

\mathcode`:="603A

\usepackage{color}

\def\twoheaddownarrow{\rlap{$\downarrow$}\raise-.5ex\hbox{$\downarrow$}}
\def\twoheaduparrow{\rlap{$\uparrow$}\raise.5ex\hbox{$\uparrow$}}

\def\threeheaddownarrow{\rlap{\rlap{$\downarrow$}\raise-.4ex\hbox{$\downarrow$}}\raise-0.8ex\hbox{$\downarrow$}}

\def\threeheaduparrow{\rlap{\rlap{$\uparrow$}\raise.4ex\hbox{$\uparrow$}}\raise0.8ex\hbox{$\uparrow$}}

\usepackage[hidelinks]{hyperref}

\usepackage[a4paper,portrait,margin=2cm]{geometry}

\usepackage{graphicx}

\usepackage{leftindex}

\usepackage[backend=bibtex,citestyle=alphabetic,bibstyle=alphabetic,maxcitenames=3,maxbibnames=5,firstinits=true]{biblatex}

\DeclareNameAlias{default}{last-first}

\usepackage{stmaryrd} 

\usepackage{tikz}
\usetikzlibrary{arrows}
\usetikzlibrary{fadings}

\usepackage{tikz-cd}
\tikzcdset{every label/.append style = {font = \small}}

\usepackage{todonotes}

\newcommand{\hideEM}[1]{}

\usepackage{xspace}
\def\ie{i.e.\xspace}
\def\eg{e.g.\xspace}

\newcommand{\id}{\mathsf{id}}

\DeclareFontFamily{OT1}{pzc}{}
\DeclareFontShape{OT1}{pzc}{m}{it}{<->s*[1.30]pzcmi7t}{}
\DeclareMathAlphabet{\mathpzc}{OT1}{pzc}{m}{it}
\def\ct#1{\mathpzc{#1}} 

\newcommand{\RQ}{\mathbb{R}}

\newcommand{\qle}{\sqsubseteq} 
\newcommand{\qJ}{\bigsqcup} 
\newcommand{\qj}{\sqcup} 
\newcommand{\qM}{\bigsqcap}
\newcommand{\qm}{\sqcap}
\newcommand{\qT}{\otimes} 
\newcommand{\qt}{\top} 
\newcommand{\qI}{\mathsf{u}} 

\newcommand{\qwb}{\twoheaddownarrow}

\newcommand{\qtb}{\threeheaddownarrow}

\newcommand{\qLowerSet}{\mathop{\downarrow}}  

\newcommand{\A}{\ct{A}}
\newcommand{\B}{\ct{B}}
\newcommand{\Set}{\ct{Set}}
\newcommand{\CL}{\ct{CL}}
\newcommand{\Po}{\ct{Po}}
\newcommand{\MS}{\ct{Met}}

\newcommand{\QUnif}{\ct{QU}} 
\newcommand{\QU}{{\cal U}}
\newcommand{\QV}{{\cal V}}

\newcommand{\Q}{\mathbb{Q}}

\newcommand{\Top}{\ct{Top}}

\newcommand{\R}{R}

\newcommand{\upperSetOf}[1]{\ensuremath{\mathop{\uparrow} {#1}}} 

\newcommand{\Idl}{\ensuremath{\mathrm{Idl}}} 

\newcommand{\cl}[1]{\overline{#1}}

\newcommand{\PS}{\mathsf{P}}
\newcommand{\setf}[1]{\ensuremath{\{{#1}\}}}
\newcommand{\setb}[2]{\ensuremath{\{{#1}\mid{#2}\}}}
\newcommand{\famb}[2]{\ensuremath{({#1}\mid{#2})}}

\newcommand{\CM}{\mathsf{C}}
\newcommand{\PM}{\mathsf{P}}

\newcommand{\defeq}{\stackrel{\vartriangle}{=}}
\newcommand{\defiff}{\stackrel{\vartriangle}{\iff}}

\newcommand{\M}{M}

\usepackage{etoolbox}

\newcommand{\Int}[2][]{\mathsf{int}\ifblank{#1}{}{_{#1}}(#2)} 

\theoremstyle{plain} 

\newtheorem{definition}{Definition}[section]

\newtheorem{corollary}[definition]{Corollary}
\newtheorem{lemma}[definition]{Lemma}

\newtheorem{proposition}[definition]{Proposition}

\newtheorem{theorem}[definition]{Theorem}

\theoremstyle{definition}

\newtheorem{example}[definition]{Example}
\newtheorem{remark}[definition]{Remark}

\newcommand{\RD}{\mathsf{RD}}

\newcommand{\rTo}{\longrightarrow}
\newcommand{\rInto}{\hookrightarrow}

\begin{document}

\title{Metrization of Quasi-Uniformities, Powerset Monads, and
  Qualitative Robustness Analysis}

\author{Francesco Dagnino\thanks{DIBRIS, University of Genoa,
    Italy, Email:\href{mailto:francesco.dagnino@unige.it}{\texttt{francesco.dagnino@unige.it}}, ORCID: 0000-0003-3599-3535}
    \and
    Amin Farjudian\thanks{School of
      Mathematics, University of Birmingham, United Kingdom, Email:
    \href{mailto:A.Farjudian@bham.ac.uk}
    {\texttt{A.Farjudian@bham.ac.uk}}, ORCID: 0000-0002-1879-0763, }
  \and
Eugenio Moggi\thanks{DIBRIS, University of Genoa,
    Italy, Email:\href{mailto:moggi@unige.it}{\texttt{moggi@unige.it}}, ORCID: 0000-0001-8018-6543}
}

\date{}

\maketitle

\begin{abstract}
  We study the relationship between quasi-uniform spaces, topological
  spaces, and quantale-valued metric spaces.  Our main result is a
  metrization theorem establishing an equivalence between the category
  of quasi-uniform spaces and a category of quantale-valued metric
  spaces. We also obtain a quantale-based metrization theorem for
  arbitrary topological spaces that refines existing
  constructions. These results identify quasi-uniformities as the
  appropriate qualitative counterpart of quantale-valued metrics.
  Building on this correspondence, we show that the Hausdorff-Smyth
  monad on quantale-valued metric spaces, which is used in
  quantitative robustness analysis, arises as a lifting of a
  corresponding monad on quasi-uniform spaces along the
  equivalence. This provides a unified categorical framework
  connecting topology, quasi-uniformity, and quantitative robustness
  analysis.
\end{abstract}

  \textbf{Keywords:} Quasi-uniform space, Quantale-valued metric
  space, Robustness, Monad, Continuous lattice.

  \section{Introduction}

  Notions of metric (or distance) and continuity are fundamental in
  quantitative analysis, including robustness analysis.  The setting
  of classical metric spaces can be too restrictive, because many
  topological spaces of practical importance are not metrizable. As a
  result, various generalizations of classical metric spaces have been
  proposed to enable quantitative analysis on spaces that are not
  classically
  metrizable~\parencite[Chapter~E]{Hart_et_al:Encyclopedia_Topology:2003}. One
  such generalization is provided by \emph{quantale-valued} metric
  spaces~\parencite{Flagg:Quantales_continuity_spaces:1997,CookW21,Dagnino_Farjudian_Moggi:Robust_Topology:arXiv:2025},
  whereby some axioms of the classical metrics have been relaxed
  ({\eg}, symmetry is not required) and metrics take values in
  (continuous) quantales, which include the quantale of the extended
  non-negative real numbers
  (Definition~\ref{def:Quantale_valued_metric_spaces}).

One of the main motivations for this paper comes from \emph{robustness
analysis}, which studies how (mathematical models of) systems react to
perturbations~\parencite{Moggi_Farjudian_Duracz_Taha:Reachability_Hybrid:2018,Farjudian_Moggi:Robustness_Scott_Continuity_Computability:2023,Dagnino_Farjudian_Moggi:Robust_Topology:arXiv:2025}.
Metrics provide a natural mechanism for measuring perturbations, and
as a result, common approaches to robustness analysis rely on a metric
structure.
In particular we identify robust analyses with continuous maps between
certain \emph{hyperspaces}, namely topological spaces on subsets of a
powerset $\PS(X)$.
The hypertopologies relevant for robust analyses, called \emph{robust
  topologies}, are determined by metrics on the set $X$.  In general,
\emph{topologically equivalent} metrics on $X$ (\ie, those generating
the same open ball topology on $X$) may determine different robust
topologies on $\PS(X)$.  Only in special cases (\eg, when the metric
space $X$ is compact) the robust topology depends only on the open
ball topology generated by the metric on $X$, and is related to known
topologies, namely the Scott topology and the upper Vietories
topology~\parencite[Theorem
A.4]{Moggi_Farjudian_Duracz_Taha:Reachability_Hybrid:2018}. Therefore,
replacing quantale-valued metric spaces with topological spaces is not
a viable option for a \emph{qualitative} robustness analysis.

In~\parencite[Theorem~6.9]{Dagnino_Farjudian_Moggi:Robust_Topology:arXiv:2025},
we show that the robust topology on $\PS(X)$, determined by a
continuous quantale-valued metric on $X$, is the open ball topology
for a continuous quantale-valued metric on $\PS(X)$, called the
Hausdorff-Smyth metric. Moreover, this construction on metrics extends
to a monad, called the Hausdorff-Smyth monad, on the category of
continuous quantale-valued metric spaces and \emph{uniformly
  continuous} maps.

In this paper we show that the category of quasi-uniform spaces and
quasi-uniformly continuous
maps~\parencite{Fletcher_Lindgren:Quasi_Uniform:Book:1982} provides the
correct qualitative framework for robustness analysis, whereby
continuous quantale-valued metrics are replaced with structures
(called \emph{quasi-uniformities}) of binary relations (called
\emph{entourages}), see Definition~\ref{def:quasi_unif}.  These
structures retain enough information to reason about uniform
continuity (and continuity).

More generally, we investigate the relationship between the
qualitative and quantitative perspectives.  In order to make these
concepts precise, we work within the framework of \emph{topological
functors} (see Definition~\ref{def:TF}), identifying as qualitative
those having certain strictness properties (see
Remark~\ref{rem:ms-equiv}).
Our starting point is the observation that continuous quantale-valued
metric spaces induce both a topology and a quasi-uniformity
(Definition~\ref{def:Phi}), yielding a network of forgetful functors
between several categories, as depicted in
Diagram~\eqref{diag:Met_QU_Top}.
While the passage from metrics to quasi-uniformities and topologies is
straightforward, the converse direction---the reconstruction of a
metric-like structure from qualitative data---is more subtle, and
amounts to proving that certain forgetful functors are equivalences.
Finally, Theorem~\ref{thm:HS_Lifting_PS_monad}, which states that the
Hausdorff-Smyth monad on metric spaces is a \emph{lifting} of one of
three Hausdorff-like monads on quasi-uniform spaces, shows that
quasi-uniformities are suitable for robustness analysis.

\paragraph{Contributions.} The main contribution of this paper is to
establish (at a category-theoretic level) connections between the
qualitative frameworks of topological spaces and quasi-uniform spaces
on the one hand, and the quantitative framework of continuous
quantale-valued metric spaces on the other hand. This shows that, in
the context of robustness analysis, quasi-uniform spaces provide the
right qualitative abstraction over metric spaces, and continuous
quantale-valued metric spaces provide a quantitative realization. In
more details, our contributions are as follows:

\begin{description}
\item[Relationship between topological and quasi-uniform spaces:] We
  show that the category of topological spaces is a coreflective
  subcategory of the category of quasi-uniform spaces
  (Theorem~\ref{thm:las:T}). In particular, each quasi-uniformity on
  $X$ induces a topology on $X$ and, for every topology $\tau$ on $X$,
  there exists the largest quasi-uniformity whose induced topology
  coincides with $\tau$.
  
\item[Metrization of quasi-uniform spaces:] Our central result
  (Theorem~\ref{thm:met:qu}) shows that the category of quasi-uniform
  spaces is equivalent to a category of quantale-valued metric
  spaces and uniformly continuous maps. This is achieved by
  constructing, from a base for a quasi-uniformity
  (Definition~\ref{def:QUnif_Base_Subbase}), a prime-continuous
  quantale (Definition~\ref{def:continuous_prime_cont_latt}) via a
  rounded-lowerset construction (Section~\ref{subsec:Rounded_Lowersets}).

\item[Metrization of topologies via quantales:] We give a construction
  showing that the category of topological spaces is equivalent to a
  category of quantale-valued metric spaces and continuous maps
  (Theorem~\ref{thm:met:top}). This refines known results
  ({\eg}, \parencite{Flagg:Quantales_continuity_spaces:1997}) by
  producing smaller and more manageable quantales.

\item[Powerset-like monads:] Building on the above equivalences, in
  Section~\ref{sec:Powerset-like-Monads}, we give three liftings of
  the powerset monad on the category of sets along the topological
  functor from the category of quasi-uniform spaces (and, thus, from
  the equivalent category of metric spaces) into the category of
  sets. This provides a unified categorical account of Hausdorff-type
  constructions in both quasi-uniform and metric settings.
\end{description}

\subsection{Structure of the Paper}
\label{subsec:structure}

After reviewing the necessary categorical, order-theoretic, and metric
preliminaries in Section~\ref{sec:preliminaries}, we introduce
quasi-uniform spaces and their relation to topological spaces in
Section~\ref{sec:q_unif_spaces}.  Section~\ref{sec:metrization}
includes the metrization results, showing that quasi-uniform
spaces and topological spaces can be metrized by continuous quantale-valued
metrics. Section~\ref{sec:Powerset-like-Monads} applies these results
to powerset constructions and Hausdorff-type monads, culminating in
the connection with the Hausdorff-Smyth monad used in robustness
analysis. Finally, Section~\ref{sec:concluding_remarks} contains some
concluding remarks.

\section{Preliminaries}
\label{sec:preliminaries}

We write $\PS(X)$ for the set of subsets of $X$, $\PS_f(X)$ for the
set of finite subsets of $X$, and $A \subseteq_f A'$ when $A$ is a
\emph{finite} subset of $A'$.
We denote by $\omega$ the set of natural numbers, and identify a
natural number $n\in\omega$ with the set of its predecessors.

We assume basic familiarity with Category
Theory~\parencite{borceux1994handbook1}.  We write $\Set$ for the
category of sets and (total) maps, $\Po$ for the category of preorders
and monotone maps, and $\Top$ for the category of topological spaces
and continuous maps. We use the symbol `$\in$' for set membership
({\eg}, $x \in X$), but we use `$:$' for membership of function types
(\eg, $f: X \to Y$) and to denote objects and arrows in categories
(\eg, $X:\Top$ and $f:\Top(X,Y)$). We write $(a_i\in A_i\mid i\in I)$
for an $I$-indexed family of elements, \ie, an element in the product
$\prod_{i\in I} A_i$, or simply $(a_i\mid i\in I)$ when the
$I$-indexed family of sets $(A_i\mid i\in I)$ is clear from the
context.

\subsection{Topological Functors}

Most of the categories we consider in this paper have a
\emph{topological functor} into $\Set$.  We recall the definition of
topological functor and some of their properties (\eg, see~\parencite[Sec
  7.3]{borceux1994handbook2}).

  \begin{definition}[Topological Functor]
    \label{def:TF}
    A functor $U:\A\to\B$ is called \emph{topological} $\defiff$ for
    every $B : \B$ and class $I$, if $\famb{A_i}{i\in I}$ is a family
    of objects in $\A$ and $\famb{f_i:B\to U(A_i)}{i\in I}$ is a
    family of arrows in $\B$ (called $I$-cone from $B$ in $\B$), then
    there exist $A : \A$ and a family $\famb{g_i:A\to A_i}{i\in I}$ of
    arrows in $\A$ (\ie, an $I$-cone from $A$ in $\A$) such that
  \begin{itemize}
  \item $U(A)=B$ and $\forall i\in I.U(g_i)=f_i$
  \item if $\famb{g'_i:A'\to A_i}{i\in I}$ is an $I$-cone from $A'$ in
    $\A$ such that $\forall i\in I.U(g'_i)=f_i\circ f$ for some
    $f:U(A')\to B$ in $\B$, then there exists a unique $g:A'\to A$ in
    $\A$ such that $U(g)=f$ and $\forall i\in I.g'_i=g_i\circ g$.
  \end{itemize}
  The pair $(A,\famb{g_i:A\to A_i}{i\in I})$ is called an
  \emph{initial structure} for
  $(B,\famb{A_i}{i\in I},\famb{f_i:B\to U(A_i)}{i\in I})$.
  The topological functor $U$ is called
  \begin{itemize}
  \item \emph{strict} when every $(B,\famb{A_i}{i\in I},\famb{f_i:B\to
    U(A_i)}{i\in I})$ has a unique initial structure,
  \item \emph{small} when for every $B\in\B$ the class
    $\famb{A\in\A}{U(A)=B}$ is a set.
  \end{itemize}
\end{definition}

The most relevant properties of topological functors are summarized in
the following proposition.
\begin{proposition}\label{prop:TF}
  If $U:\A\to\B$ is a topological functor, then:
  
  \begin{enumerate}
  \item $U^{op}:\A^{op}\to\B^{op}$ is a topological functor too.
  \item $U$ is faithful, surjective on objects, and creates limits and
    colimits.
  \item $U$ has full\&faithful left- and right-adjoints, denoted $L$
    and $R$, respectively. Moreover, they can be chosen to be
    \emph{sections}, \ie, $U\circ L=U\circ R=\id_\B$.
  \item For every $B\in\B$ the fiber $\A_B$, \ie, the sub-category of
    $\A$ whose arrows are the $g$ such that $U(g)=\id_B$, is a
    \emph{complete preorder}, \ie, it has all joins (including large ones).
  \item If $U$ is also strict, then the fibers are posets.
  \item If $U$ is also small, then the fibers are small.
  \end{enumerate}
\end{proposition}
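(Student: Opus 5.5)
The plan is to derive everything from the defining universal property of initial structures. The one nontrivial ingredient is the standard fact that topologicity is self-dual, item~1.

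\textbf{Fibers (items 4--6).} Fix $B$. For objects $A,A'$ of the fiber $\A_B$, put $A'\le A$ iff there is $g:A'\to A$ with $U(g)=\id_B$. By faithfulness (proved directly below) such a $g$ is unique, so $\A_B$ is a preorder. Given a family $(A_i\mid i\in I)$ in $\A_B$, which may be a class, take the initial structure $A$ for the cone $(\id_B:B\to U(A_i))_i$. The universal property says exactly that $A'\le A$ iff $A'\le A_i$ for all $i$. So $A$ is a meet, and the fiber has all meets, hence all joins, i.e.\ it is a complete preorder.

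Faithfulness itself comes from the empty cone. Let $D_B$ be the initial structure for $(B,\emptyset,\emptyset)$. Then for every $A'$ and every $f:U(A')\to B$ there is a unique $g:A'\to D_B$ with $U(g)=f$. Now take $g_1,g_2:A'\to A$ with $U(g_1)=U(g_2)=f$, and form the initial structure for the singleton cone $(\id_{U(A)}:U(A)\to U(A))$ with target $A$. Uniqueness in the universal property for the cone $(g'_1)=(g_1)$, applied against $\id_A$, forces $g_1=g_2$.

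Strictness makes initial structures unique. If $A\le A'\le A$ in $\A_B$, both $A$ and $A'$ are initial for the cone consisting of $\id_B$ into $A$, so $A=A'$; hence the fibers are posets (item~5). Smallness (item~6) is immediate from the definition.

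\textbf{Self-duality (item 1).} I expect this to be the main obstacle. Given a cocone $(f_i:U(A_i)\to B)_i$, consider the class $S$ of all pairs $(C,h)$ with $U(C)=B$ and $U(h_i)$ factoring appropriately, i.e.\ all $C$ in $\A_B$ such that every $f_i$ lifts to $A_i\to C$. The final structure is the initial structure for the class-indexed cone $(\id_B: B\to U(C))_{C\in S}$; this is the meet in the fiber of all $C\in S$. One must check that each $f_i$ lifts into this meet. The lifts $A_i\to C$ for $C\in S$ form a cone over $S$ with image $f_i$, so the universal property gives a lift $A_i\to\text{meet}$. One must also check that the meet is final, which is the same argument with an arbitrary cocone $g'_i:A_i\to A'$. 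The size issue is handled because the definition explicitly allows class-indexed cones.

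\textbf{Items 2--3.} Surjectivity on objects follows from $D_B$. For adjoints, $R(B):=D_B$ is a right adjoint section: the empty-cone property is precisely $\A(A',D_B)\cong\B(U A',B)$. Dually, via item~1, the final structure for the empty cocone gives a left adjoint section $L$. Both adjoints are full and faithful because the counit $URB=B$ (respectively the unit) is the identity.

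For limits, take a diagram $D:\mathcal{J}\to\A$ and a limit cone $(p_j:B\to UD_j)$ in $\B$; this needs $\B$ to have it, which is the sense of "creates". Form the initial structure $A$ on this cone. Naturality of the lifted $g_j$ follows from faithfulness. The universal property of the initial structure, combined with the limit property in $\B$, shows that $A$ is a limit; uniqueness of the lift is again faithfulness. Colimits follow dually by item~1.
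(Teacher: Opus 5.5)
The paper gives no proof of this proposition; it refers to Borceux, Sec.~7.3. Your overall architecture is the standard one: the empty cone gives $R$, class-indexed cones give meets in the fibers, and duality comes from a meet of candidate structures. But the step nearly everything rests on, faithfulness, is not established.

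Your singleton-cone argument is vacuous. The pair $(A,\id_A)$ is itself an initial structure for $\id_{U(A)}:U(A)\to U(A)$. Its universal property only says that the unique $g$ with $U(g)=f$ and $\id_A\circ g=g_1$ is $g_1$, and likewise for $g_2$. Uniqueness is relative to the commutation condition, so it never compares $g_1$ with $g_2$, and the same holds for any initial lift $c:C\to A$ of that cone.

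In fact no argument using only set-indexed cones can work. The functor from $\Set$ to the terminal category has initial structures for all set-indexed cones (they are just small products), but it is not faithful.

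The missing idea is a Freyd-style size argument with a proper class $I$ (e.g.\ the ordinals) and the fact that hom-sets of $\A$ are sets:
\begin{itemize}
\item Take the initial structure $(C,(c_\alpha:C\to A)_{\alpha\in I})$ for the cone $(f:U(A')\to U(A))_{\alpha\in I}$.
\item For each $\beta\in I$, the cone from $A'$ with component $g_2$ at $\beta$ and $g_1$ elsewhere yields a unique $k_\beta:A'\to C$ over $\id_{U(A')}$.
\item If $g_1\neq g_2$, then $c_\beta\circ k_\beta=g_2\neq g_1=c_\beta\circ k_\gamma$ for $\gamma\neq\beta$, so $\beta\mapsto k_\beta$ injects a proper class into the set $\A(A',C)$.
\end{itemize}
This is exactly why the definition quantifies over classes.

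Once faithfulness is repaired, the rest of your argument goes through. This includes preorder fibers, naturality of lifted limit cones, and uniqueness of the lifts $A_i\to C$ in item~1, which also avoids a choice over a class.

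Item~1 still needs one further step. Finality is not literally ``the same argument'', because a competing sink $g'_i:A_i\to A'$ comes with $f:B\to U(A')$, and $A'$ need not lie in $\A_B$. You must first pull $A'$ back along $f$:
\begin{itemize}
\item The initial structure $c:C\to A'$ for the singleton cone $f:B\to U(A')$ lies in $\A_B$.
\item It belongs to your class $S$, because each $g'_i$ factors through $c$ via a lift of $f_i$.
\item The required arrow is $c\circ m_C$, where $m_C$ is the projection of the meet onto $C$.
\item The equations $c\circ m_C\circ h_i=g'_i$ and the uniqueness of this arrow then follow from faithfulness.
\end{itemize}
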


  \begin{example}
    \label{example:TF}
    The forgetful functors $U:\Top\to\Set$ and $U:\Po\to\Set$, namely
    $U(X,\tau)=U(X,\qle)\defeq X$ and $U(f)=f$ ({\ie}, on arrows they
    are hom-sets inclusion), are examples of strict and small
    topological functors.
The restriction of these functors to the full sub-categories
$\Top_0$ of $T_0$-topological spaces and $\Po_0$ of posets are not
topological, because they do not have right-adjoints.
\end{example}

\subsection{Preorders, Lattices, and Quantales}

  Given a preorder $(X,\qle)$ its \emph{dual} is $(X,\qle^o)$, where
  $x\qle^o y\defiff y\qle x$. The definition of dual applies to any
  binary relation on $X$ (see Section~\ref{sec:BR}). The dual induces
  an \emph{involution} on $\Po$, \ie, an endofunctor $-^o:\Po\rTo \Po$
  which is its own inverse, whose action on arrows is $f^o=f$.

\begin{definition}
Given a preorder $(X,\qle)$ and a subset $A\subseteq X$, we say that:
\begin{itemize}
\item $A$ is an \emph{upperset} $\defiff\forall x \in A. \forall y \in
  X. \
  x\qle y\implies y\in A$.
\item $A$ is \emph{filtered} $\defiff$ $A$ is non-empty and
  $\forall x,y\in A.\exists z\in A.z\qle x,y$.
\end{itemize}
A \emph{lowerset} and a \emph{directed} subset in $(X,\qle)$ are
defined as an upperset and a filtered subset in the dual $(X,\qle^o)$.
\end{definition}

\begin{definition}
  A \emph{complete lattice} $(Q,\qle)$ is a poset in which every
  subset $A\subseteq Q$ has a \emph{join} $\qJ A$ (and as a
  consequence, also a \emph{meet} $\qM A$).  A \emph{quantale}
  $(Q,\qle,\qT,\qI)$ is a monoid $(Q,\qT,\qI)$ on a complete lattice
  $(Q,\qle)$ satisfying the following distributivity law:
  \begin{equation*}
     \forall A,B\subseteq Q.\quad
    \left(\qJ A\right) \qT \left( \qJ B \right) = \qJ\setb{x\qT y}{x\in A\land y\in B}.    
  \end{equation*}
  Given a quantale $(Q,\qle,\qT,\qI)$, its \emph{dual} is the quantale
  $(Q,\qle,\qT^o,\qI)$, where $x\qT^oy\defeq y\qT x$.
\end{definition}
If $(Q,\qle)$ is a complete lattice, then so is $(Q,\qle^o)$. This is
false for quantales, \ie, if $(Q,\qle,\qT,\qI)$ is a quantale, then
$(Q,\qle^o,\qT,\qI)$ fails to satisfy distributivity (unless $Q$ is
trivial).
However, on the wider class of preordered monoids (\ie, monoids in
$\Po$), both $(Q,\qle^o,\qT,\qI)$ and $(Q,\qle,\qT^o,\qI)$ are
preordered monoids.

  \begin{definition}
    \label{def:monoidal:maps}
    Given two quantales (more generally, two preordered monoids)
    $(Q,\qle,\qT,\qI)$ and $(Q',\qle',\qT',\qI')$, a monotone map
    $f:(Q,\qle)\to (Q',\qle')$ is
  \begin{itemize}
  \item \emph{lax-monoidal} $\defiff$ $\qI'\qle' f(\qI)$ and $\forall
    q_0,q_1\in Q.f(q_0)\qT' f(q_1)\qle' f(q_0\qT q_1)$
  \item \emph{strict-monoidal} $\defiff$ $\qI'= f(\qI)$ and $\forall
    q_0,q_1\in Q.f(q_0)\qT' f(q_1)=f(q_0\qT q_1)$.
  \end{itemize}
\end{definition}
The map $(Q,\qle,\qT,\qI)\mapsto(Q,\qle,\qT^o,\qI)$ on preordered
monoids extends to an involution on the category of preordered monoids
and lax-monoidal maps, and restricts to an involution on the
sub-category of preordered monoids and strict monoidal maps.
The map $(Q,\qle,\qT,\qI)\mapsto(Q,\qle^o,\qT,\qI)$ on preordered
monoids extends to an involution only on the sub-category of
preordered monoids and strict monoidal maps.

The following result identifies inequalities preserved by lax-monoidal
maps (between preordered monoids).
\begin{proposition}
  \label{prop:lax}
  If a map $f: (Q,\qle,\qT,\qI) \to (Q',\qle',\qT',\qI')$ is
  lax-monoidal, then
  \begin{equation*}
    (\qT_{i\in n} q_i)\qle q \implies (\qT'_{i\in n} f(q_i))\qle' f(q),   
  \end{equation*}
  where $(\qT_{i\in 0} q_i)\defeq\qI$ and $(\qT_{i\in (n+1)} q_i)\defeq
  (\qT_{i\in n} q_i)\qT q_n$.  Moreover, we write $q^n$ for
  $(\qT_{i\in n} q)$.
\end{proposition}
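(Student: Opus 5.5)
We argue by induction on $n$, proving first the auxiliary claim that for every finite family $(q_i\mid i\in n)$
\begin{equation*}
  (\qT'_{i\in n} f(q_i)) \qle' f(\qT_{i\in n} q_i).
\end{equation*}
The stated implication then follows at once. If $(\qT_{i\in n} q_i)\qle q$, monotonicity of $f$ gives $f(\qT_{i\in n} q_i)\qle' f(q)$. Chaining this with the auxiliary claim, by transitivity of $\qle'$, yields $(\qT'_{i\in n} f(q_i))\qle' f(q)$.

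\emph{Base case $n=0$.} Both empty products are units by definition, so the claim reads $\qI'\qle' f(\qI)$. This is exactly the first clause of lax-monoidality in Definition~\ref{def:monoidal:maps}.

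\emph{Inductive step.} We use the recursive definition $(\qT'_{i\in n+1} f(q_i)) = (\qT'_{i\in n} f(q_i))\qT' f(q_n)$. By the induction hypothesis, the first factor is below $f(\qT_{i\in n} q_i)$. Since $(Q',\qle',\qT',\qI')$ is a preordered monoid (a monoid in $\Po$), $\qT'$ is monotone in each argument, so
\begin{equation*}
  (\qT'_{i\in n} f(q_i))\qT' f(q_n) \qle' f(\qT_{i\in n} q_i)\qT' f(q_n).
\end{equation*}
The second clause of lax-monoidality, applied with $q_0=\qT_{i\in n} q_i$ and $q_1=q_n$, gives
\begin{equation*}
  f(\qT_{i\in n} q_i)\qT' f(q_n) \qle' f\bigl((\qT_{i\in n} q_i)\qT q_n\bigr) = f(\qT_{i\in n+1} q_i).
\end{equation*}
Transitivity of $\qle'$ closes the induction.

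The only point needing any care is monotonicity of $\qT'$ in its first argument. For quantales this follows from distributivity: if $x\qle' y$ then $y=\qJ\{x,y\}$, so $y\qT' z = (x\qT' z)\qj (y\qT' z)$, which lies above $x\qT' z$. For general preordered monoids it holds by definition. Everything else is immediate.
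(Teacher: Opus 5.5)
Your proof is correct and follows the same route as the paper: an induction on $n$ establishing $(\qT'_{i\in n} f(q_i))\qle' f(\qT_{i\in n} q_i)$, followed by monotonicity of $f$. Your remark justifying monotonicity of $\qT'$ via distributivity is a harmless extra detail that the paper leaves implicit.
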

\begin{proof}
  We prove  by induction on $n$ that
  $(\qT'_{i\in n} f(q_i))\qle' f(\qT_{i\in n} q_i)$.
  For the base case $0$ we have $\qI'\qle'f(\qI)$, because $f$ is lax-monoidal.
  For the inductive step $n+1$ we have:
  \begin{itemize}
   \item $(\qT'_{i\in (n+1)} f(q_i))=$ by definition
   \item $(\qT'_{i\in n} f(q_i))\qT' f(q_n)\qle'$ by the induction
     hypothesis and monotonicity of $\qT'$
   \item $f(\qT_{i\in n} q_i)\qT' f(q_n)\qle'$ by $f$ lax-monoidal
   \item $f(\qT_{i\in (n+1)} q_i)$.
  \end{itemize}
  Therefore, by $(\qT_{i\in n} q_i)\qle q$ and monotonicity of $f$, we
  get $(\qT'_{i\in n} f(q_i))\qle' f(\qT_{i\in n} q_i)\qle' f(q)$.
\end{proof}

Strict-monoidal maps preserve a wider class of inequalities, namely
$(\qT_{i\in m} q_i)\qle (\qT_{j\in n} q'_j)$.  However, most of the
maps we consider are only lax-monoidal.

\begin{definition}
Given two complete lattices $(Q,\qle)$ and $(Q',\qle')$ and a monotone
map $f:Q\to Q'$, we say that:
\begin{itemize}
\item $f$ is \emph{Scott continuous} $\defiff$ $\qJ f(D)=f(\qJ D)$,
  for every directed $D\subseteq Q$.
\item $f$ is \emph{join-preserving} $\defiff$ $\qJ f(D)=f(\qJ D)$,
  for every $D\subseteq Q$.
\end{itemize}
\end{definition}

The category $\Po$ is a cartesian closed category (CCC), and
restricting to complete lattices (and monotonic maps) yields a full
sub-CCC.  However, a more interesting CCC is the category $\CL$ of
complete lattices and Scott continuous maps. We focus on the full
sub-CCC of $\CL$ consisting of continuous lattices
(see~\parencite{AbramskyJung94-DT,Gierz-ContinuousLattices-2003,Goubault-Larrecq:Non_Hausdorff_topology:2013})
and on \emph{continuous quantales}
(see~\parencite[Sec~3.1]{Dagnino_Farjudian_Moggi:Robust_Topology:arXiv:2025}).
\begin{definition}
Given a complete lattice $(Q,\qle)$ and $x,y\in Q$, we say that:
\begin{enumerate}
\item $x$ is \textbf{way-below} $y$ (notation $x\ll y$) $\defiff$
  $y\qle\qJ D\implies\exists d \in D.x\qle d$, for every directed
  $D\subseteq Q$.
\item $x$ is \textbf{totally-below} $y$ (notation $x\lll y$) $\defiff$
  $y\qle\qJ D\implies\exists d \in D.x\qle d$, for every
  $D\subseteq Q$.
\end{enumerate}
We write $\qwb y$ and $\qtb y$ to denote the sets
$\setb{x \in Q}{x\ll y}$ and $\setb{x \in Q}{x\lll y}$, respectively.
\end{definition}

The relations $\ll$ and $\lll$ have the following properties:
\begin{proposition}
  \label{prop:orders_and_approx}
  Given a complete lattice $(Q,\qle)$ and $x, x_0, x_1 \in Q$, one has:
  \begin{enumerate}
  \item $x_0\lll x_1\implies x_0\ll x_1\implies x_0\qle x_1$.
  \item $x_0'\qle x_0\ll x_1\qle x_1'\implies x_0'\ll x_1'$.
  \item \label{item:order_totally_below}
    $x_0'\qle x_0\lll x_1\qle x_1'\implies x_0'\lll x_1'$.
  \item $\bot\ll x$ and $\qwb x$ is directed. More precisely,
    $x_0,x_1\ll x\implies x_0\qj x_1\ll x$.
  \end{enumerate}
\end{proposition}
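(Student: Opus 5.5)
All four items follow by unfolding the definitions of $\ll$ and $\lll$. Each comes down to a single instantiation of the quantified directed (or arbitrary) family $D$, or to a short monotonicity argument. I treat the items in order.

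For item 1, let $x_0\lll x_1$. Every directed $D$ is in particular a subset of $Q$, so the defining condition of $\lll$ applies to it, and $x_0\ll x_1$ follows at once. Now assume $x_0\ll x_1$ and take $D=\setf{x_1}$, which is directed since it is nonempty and $x_1\qle x_1,x_1$. Then $x_1\qle\qJ D=x_1$, so some $d\in D$ satisfies $x_0\qle d$, that is, $x_0\qle x_1$.

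Items 2 and 3 are proved the same way. Suppose $x_0'\qle x_0\ll x_1\qle x_1'$, and let $D$ be directed with $x_1'\qle\qJ D$. By transitivity $x_1\qle\qJ D$, so there is $d\in D$ with $x_0\qle d$, and therefore $x_0'\qle d$. For item 3 the argument is word for word the same with ``directed'' replaced by ``arbitrary''.

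For item 4, first $\bot\ll x$. A directed $D$ is nonempty by definition, so it has some element $d$, and $\bot\qle d$ holds for it. Next, let $x_0,x_1\ll x$ and let $D$ be directed with $x\qle\qJ D$. Then there are $d_0,d_1\in D$ with $x_0\qle d_0$ and $x_1\qle d_1$. Because $D$ is directed, there is $d\in D$ with $d_0,d_1\qle d$, so $x_0\qj x_1\qle d$. This gives $x_0\qj x_1\ll x$. Consequently $\qwb x$ is nonempty (it contains $\bot$) and is closed under binary joins, which is enough for it to be directed.

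The one point needing care is the nonemptiness built into the definition of directed, which is the dual of filtered. It is used twice: once to check that $\setf{x_1}$ is an admissible $D$ in item 1, and once to get $\bot\ll x$ in item 4. If the empty family were allowed as a directed set, then $\bot\ll x$ would fail whenever $x=\bot$.
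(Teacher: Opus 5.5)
Your proof is correct. The paper states this proposition without proof, as standard facts, and your direct unfolding of the definitions of $\ll$ and $\lll$ (instantiating $D=\{x_1\}$, chaining $\qle$, and using that directed sets are nonempty and have upper bounds) is the routine argument it leaves implicit, including your accurate observation that admitting the empty family would break $\bot\ll\bot$.
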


\begin{definition}
  \label{def:continuous_prime_cont_latt}
  Given a complete lattice $(Q,\qle)$ or a quantale
  $(Q,\qle,\qT,\qI)$, we say that:
  \begin{enumerate}
  \item $Q$ is \textbf{continuous} $\defiff\forall y \in Q.y=\qJ\qwb
    y$.
  \item $B\subseteq Q$ is a \textbf{base} for $Q$ $\defiff\forall y \in
    Q.\qwb_B y$ is directed and $y=\qJ\qwb_B y$, where $\qwb_B
    y\defeq\setb{x \in B}{x\ll y}$.

  \item $Q$ is \textbf{prime-continuous} $\defiff\forall
    y \in Q.y=\qJ \qtb y$.
  \item $B\subseteq Q$ is a \textbf{prime-base} for $Q$
    $\defiff\forall y \in Q.  y=\qJ \qtb_B y$, where
    $\qtb_B y \defeq \setb{x \in B}{x\lll y}$.
  \end{enumerate}
\end{definition}

A fundamental feature of the way-below and totally-below relations is
the \emph{interpolation} property:
    
\begin{proposition}[Interpolation
    {\parencite[Lemma~2.2.15]{AbramskyJung94-DT}}]\label{prop:int:1}\
  \begin{itemize}
  \item If $B$ is a base for the continuous lattice $Q$, then $q_1 \ll
    q_2\implies\exists q \in B.q_1\ll q\ll q_2$.
  \item If $B$ is a prime-base for the prime-continuous lattice $Q$,
    then $q_1 \lll q_2\implies\exists q \in B.q_1\lll q\lll q_2$.
  \end{itemize}
\end{proposition}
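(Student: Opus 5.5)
The plan is to prove each item separately, following the standard argument for continuous domains. For each relation, I first show that the set of elements of the base lying below $q_2$ can be "refined" to a larger set whose join is still $q_2$. Then I apply the defining property of $\ll$ (respectively $\lll$) to that set.

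\emph{Way-below case.} Suppose $q_1\ll q_2$ and let $B$ be a base. Consider
\[ D\defeq\setb{x\in B}{\exists y\in Q.\ x\ll y\ll q_2}. \]
The first step is to show $\qJ D=q_2$. Since $Q$ is continuous, $q_2=\qJ\qwb q_2$. Each $y\in\qwb q_2$ satisfies $y=\qJ\qwb_B y$, and every $x\in\qwb_B y$ lies in $D$. Hence $q_2=\qJ_{y\ll q_2}\qJ\qwb_B y=\qJ D$.

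The second step is to show that $D$ is directed. It is nonempty, because $\qwb_B\bot$ is directed and hence nonempty, and its elements lie in $D$. Now take $x_0\ll y_0\ll q_2$ and $x_1\ll y_1\ll q_2$. By Proposition~\ref{prop:orders_and_approx}, $y\defeq y_0\qj y_1\ll q_2$, and also $x_0,x_1\ll y$. Since $\qwb_B y$ is directed and contains $x_0,x_1$, it contains an upper bound $x$ of both, and $x\in D$.

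Now apply $q_1\ll q_2=\qJ D$ to get some $x\in D$ with $q_1\qle x$, where $x\ll y\ll q_2$. Then $q_1\ll y$ by Proposition~\ref{prop:orders_and_approx}, and repeating the argument with $y$ in place of $q_2$ yields the required interpolant. A cleaner route avoids the repetition by taking $D'\defeq\setb{x\in B}{\exists z\in B.\ x\ll z\ll q_2}$. The same two steps show $\qJ D'=q_2$ and that $D'$ is directed; for the join one uses that $y=\qJ\qwb_B y$ and $z\ll y$ imply $z\in\qwb_B y$. Then $q_1\qle x\ll z\ll q_2$ with $z\in B$, so $q_1\ll z\ll q_2$ and $z$ is the interpolant.

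\emph{Totally-below case.} Let $B$ be a prime-base and take
\[ D\defeq\setb{x\in B}{\exists z\in B.\ x\lll z\lll q_2}. \]
Here no directedness is needed, because $\lll$ quantifies over arbitrary subsets. So I only need $\qJ D=q_2$. This follows from $q_2=\qJ\qtb_B q_2$ together with $z=\qJ\qtb_B z$ for each $z\in\qtb_B q_2$. Then $q_1\lll q_2=\qJ D$ gives $q_1\qle x\lll z\lll q_2$, and Proposition~\ref{prop:orders_and_approx}(\ref{item:order_totally_below}) yields $q_1\lll z\lll q_2$ with $z\in B$.

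I expect the main obstacle to be the directedness of $D'$ in the first item. The join $y_0\qj y_1$ of two witnesses need not belong to $B$, so the witnesses $z_0,z_1$ have to be merged by interpolating inside the base. Concretely, I would take $x_0\ll z_0\ll q_2$ and $x_1\ll z_1\ll q_2$, note that $z_0\qj z_1\ll q_2$, and then use directedness of $\qwb_B(z_0\qj z_1)$. If this still does not produce a witness $z\in B$ for the merged element, I would fall back on the non-base set $D$ with witnesses $y\in Q$ and the repetition argument described above.
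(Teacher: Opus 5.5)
Your second item is correct and complete. $\qJ D=q_2$ follows by applying the prime-base property to $q_2$ and then to each $z\in\qtb_B q_2$. No directedness is needed, and item~\ref{item:order_totally_below} of Proposition~\ref{prop:orders_and_approx} turns $q_1\qle x\lll z\lll q_2$ into $q_1\lll z\lll q_2$. In the first item, you also correctly show that your set $D$ (with witnesses $y\in Q$) is directed with join $q_2$.

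The gap is the final step of the first item, exactly where you expected trouble, and neither route closes it as written.
\begin{itemize}
\item The route through $D$ only delivers $q_1\ll y\ll q_2$ with $y\in Q$, plus some $x\in B$ with merely $q_1\qle x$. ``Repeating the argument with $y$ in place of $q_2$'' applies it to $q_1\ll y$ and just returns $q_1\qle x'\ll y'\ll y$ with $x'\in B$ and $y'\in Q$. That is the same situation one level down; it never produces an element of $B$ way-above $q_1$.
\item For $D'$, merging via $z_0\qj z_1$ fails for the reason you give, so directedness of $D'$ is unproved. Your fallback is the repetition step just described.
\end{itemize}

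The missing idea is one clause of the definition of base that you never use: $\qwb_B q_2$ is itself directed with join $q_2$. Hence any $y\ll q_2$ lies below some $b\in\qwb_B q_2$, and any $z_0,z_1\in\qwb_B q_2$ have an upper bound in $\qwb_B q_2$. Either route can then be finished.
\begin{itemize}
\item For $D'$: pick $z\in\qwb_B q_2$ with $z_0,z_1\qle z$. Then $x_0,x_1\ll z$, and directedness of $\qwb_B z$ gives $x\in B$ above both with $x\ll z\ll q_2$, so $x\in D'$. Also $\bot\in D'$, since $\qwb_B\bot\neq\emptyset$ forces $\bot\in B$. The join is best computed as $q_2=\qJ\qwb_B q_2=\qJ\setb{\qJ\qwb_B z}{z\in\qwb_B q_2}\qle\qJ D'$.
\item For the route through $D$: from $y\ll q_2$ take $b\in\qwb_B q_2$ with $y\qle b$. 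Then $q_1\ll y\qle b\ll q_2$ gives $q_1\ll b\ll q_2$ with $b\in B$. Equivalently, rerun your argument on $y\ll q_2$, i.e.\ with $y$ in place of $q_1$, not $q_2$.
\end{itemize}

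With this repair, your first item is the standard direct argument behind the Abramsky--Jung lemma that the paper merely cites. The paper adds only that in a lattice the single-element and finite-set forms agree via $\qJ A\ll q$. Your second item gives a direct proof of the $\lll$ case, which the cited $\ll$-lemma does not literally cover.
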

The interpolation property for continuous lattices given above is
equivalent to that for continuous domains given
in~\parencite[Lemma~2.2.15]{AbramskyJung94-DT}, because in a continuous
lattice $(\forall q'\in A.q'\ll q)\iff(\qJ A)\ll q$ for every
$A\subseteq_f Q$, while in a domain the finite join $\qJ A$ may not
exists.

\begin{proposition}[{\parencite[Lemma~3.19]{Dagnino_Farjudian_Moggi:Robust_Topology:arXiv:2025}}]\label{prop:int:2}\
  \begin{itemize}
  \item If $B$ is a base for the quantale $Q$, then
    $q \ll q_1\qT q_2\implies \exists q'\in B.q'\ll q_2\land q \ll q_1 \qT q'$.
  \item If $B$ is a prime-base for the quantale $Q$,
    then $q \lll q_1\qT q_2\implies \exists q'\in B.q'\lll q_2\land q
    \lll q_1 \qT q'$.
  \end{itemize}
  Both claims hold also when $\qT$ is replaced by its dual $\qT^o$.
\end{proposition}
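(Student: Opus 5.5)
I would prove Proposition~\ref{prop:int:2} by reducing it to the interpolation property of Proposition~\ref{prop:int:1} together with the distributivity law of the quantale. I treat the way-below case first; the totally-below case follows the same template with arbitrary joins in place of directed ones.

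\textbf{Way-below case.} Assume $q\ll q_1\qT q_2$, where $B$ is a base for $Q$. Since $B$ is a base, $q_2=\qJ\qwb_B q_2$ and $\qwb_B q_2$ is directed. Distributivity, applied with $A=\setf{q_1}$, gives
\[ q_1\qT q_2=\qJ\setb{q_1\qT q'}{q'\in\qwb_B q_2}. \]
The family $\setb{q_1\qT q'}{q'\in \qwb_B q_2}$ is directed, because $\qT$ is monotone and $\qwb_B q_2$ is directed. Note that $q\ll q_1\qT q_2$ does not by itself give $q\qle$ some member of this family, since we need $q$ way-below a member, not just below one. So I would first interpolate, using Proposition~\ref{prop:int:1} or just the interpolation property of the continuous lattice $Q$. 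This yields $p$ with $q\ll p\ll q_1\qT q_2$. Applying the definition of $p\ll q_1\qT q_2$ to the directed join above gives some $q'\in\qwb_B q_2$ with $p\qle q_1\qT q'$. Then $q\ll p\qle q_1\qT q'$, and Proposition~\ref{prop:orders_and_approx}(2) gives $q\ll q_1\qT q'$. Together with $q'\ll q_2$ and $q'\in B$, this is the claim.

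\textbf{Totally-below case.} Here $B$ is a prime-base, so $q_2=\qJ\qtb_B q_2$. Distributivity again expresses $q_1\qT q_2$ as the join of the family $\setb{q_1\qT q'}{q'\in\qtb_B q_2}$, and this time no directedness is needed because $\lll$ refers to arbitrary joins. I interpolate $q\lll p\lll q_1\qT q_2$ using Proposition~\ref{prop:int:1}, taking the prime-base to be $Q$ itself, which is legitimate since $Q$ is prime-continuous. This gives $q'\in\qtb_B q_2$ with $p\qle q_1\qT q'$, and Proposition~\ref{prop:orders_and_approx}(\ref{item:order_totally_below}) then yields $q\lll q_1\qT q'$.

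\textbf{Dual multiplication.} For $\qT^o$ the same argument goes through, because $(Q,\qle,\qT^o,\qI)$ is again a quantale with the same order. Hence the same base, the same relations $\ll$ and $\lll$, and the same distributivity law are available.

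\textbf{Main obstacle.} The main subtlety is recognising that the interpolation step is needed. It is also worth checking that interpolation in $Q$ itself is available. For a base $B$ of $Q$, the lattice $Q$ is automatically continuous, since $\qJ\qwb y\sqsupseteq\qJ\qwb_B y=y$, and similarly in the prime case. So Proposition~\ref{prop:int:1} applies with base $Q$.
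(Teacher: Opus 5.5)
Your proof is correct. Interpolating first ($q\ll p\ll q_1\qT q_2$, resp.\ with $\lll$) and writing $q_1\qT q_2=\qJ\setb{q_1\qT q'}{q'\in\qwb_B q_2}$ (resp.\ over $\qtb_B q_2$) by distributivity lets you pick one term above $p$; that is the natural argument, and the dual case does follow by applying it to the quantale $(Q,\qle,\qT^o,\qI)$, which has the same order and hence the same (prime-)bases. The paper gives no proof of this statement here, importing it as Lemma~3.19 of the authors' companion paper, so there is no in-paper argument to compare against.
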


The following result relates bases and prime-bases.

\begin{proposition}\label{prop:base:pbase}
  If $P$ is a prime-base and $B$ is a base for a prime-continuous
  lattice $(Q,\qle)$, then
  \begin{enumerate}
  \item $B$ is a prime-base for $Q$
  \item $P_\qj$ is a base for $Q$, where $P_\qj$ is the closure of $P$
    under finite joins computed in $Q$.
  \end{enumerate}
\end{proposition}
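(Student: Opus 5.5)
For item 1 we must show $y=\qJ\qtb_B y$ for every $y\in Q$. Since $\qtb_B y\subseteq\qtb y\subseteq\qwb y$ and every element of $\qwb y$ lies below $y$ (Proposition~\ref{prop:orders_and_approx}), the inequality $\qJ\qtb_B y\qle y$ is immediate, and only $y\qle\qJ\qtb_B y$ needs work. Because $Q$ is prime-continuous, $y=\qJ\qtb y$, so it suffices to show that each $x\lll y$ lies below $\qJ\qtb_B y$. Given $x\lll y$, we first apply the interpolation property for $\lll$ with respect to the prime-base $Q$ itself (Proposition~\ref{prop:int:1}) to obtain $z$ with $x\lll z\lll y$. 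Next, since $B$ is a base, $z=\qJ\qwb_B z$. Applying the definition of $x\lll z$ to the set $\qwb_B z$ yields some $b\in B$ with $b\ll z$ and $x\qle b$. Finally, $b\ll z$ gives $b\qle z$, and together with $z\lll y$ this yields $b\lll y$ by Proposition~\ref{prop:orders_and_approx}(\ref{item:order_totally_below}). Hence $b\in\qtb_B y$, so $x\qle b\qle\qJ\qtb_B y$. This gives $y\qle\qJ\qtb_B y$, as required.

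For item 2 we must show, for each $y$, that $\qwb_{P_\qj} y$ is directed and that its join is $y$. Directedness follows from Proposition~\ref{prop:orders_and_approx}(4). The empty join $\bot$ belongs to $P_\qj$ and satisfies $\bot\ll y$, so the set is nonempty. If $x_0,x_1\in P_\qj$ with $x_0,x_1\ll y$, then $x_0\qj x_1\in P_\qj$ and $x_0\qj x_1\ll y$. For the join, $\qtb_P y\subseteq\qwb_{P_\qj} y$, since $P\subseteq P_\qj$ and $\lll$ implies $\ll$. Therefore $y=\qJ\qtb_P y\qle\qJ\qwb_{P_\qj}y\qle y$.

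I do not expect a serious obstacle; the only delicate step is in item 1. There we must interpolate first, so that $b$ is obtained below some $z\lll y$ and not merely below $y$. Without this step we would only get $b\ll y$, which is too weak to place $b$ in $\qtb_B y$.
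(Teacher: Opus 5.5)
Your proof is correct. Item~2 follows the paper's argument; you also make explicit that $\bot$, the empty join, belongs to $P_\qj$, which the directedness (nonemptiness) of $\qwb_{P_\qj}\bot$ requires.

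For item~1 your route differs. You bound each $x\lll y$ above by a \emph{single} element of $\qtb_B y$. That forces you to interpolate $x\lll z\lll y$ (Proposition~\ref{prop:int:1} with $Q$ as its own prime-base), and then to apply the total-below property of $x$ to the set $\qwb_B z$, whose join is $z$.

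The paper instead approximates $x$ from below. It uses $x=\qJ\qwb_B x$, and every $b\in\qwb_B x$ satisfies $b\qle x\lll y$, hence $b\lll y$ by Proposition~\ref{prop:orders_and_approx}. So $x\qle\qJ\qtb_B y$, and taking the join over all $x\lll y$ gives $y\qle\qJ\qtb_B y$. This uses only the base property of $B$ and the monotonicity of $\lll$, with no interpolation.

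Your closing remark that interpolation is unavoidable is therefore true only for your strategy of dominating $x$ by one base element. Your version does yield the cofinality of $\qtb_B y$ in $\qtb y$ directly. That also follows at once from the paper's conclusion, by applying the definition of $x\lll y$ to the set $\qtb_B y$, whose join is $y$. So the paper's argument is the more economical of the two.
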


\begin{proof}
  For the first claim, we prove $\forall y\in Q.y=\qJ\setb{b\in
    B}{b\lll y}$. Let $S\defeq\setb{b\in B}{\exists x\in Q.b\ll x\lll
    y}$, then $\qJ S=\qJ_{x\lll y}(\qJ\qwb_B x)=\qJ_{x\lll
    y}x=y=\qJ\qwb_By$. But $S\subseteq\setb{b\in B}{b\lll
    y}\subseteq\qwb_By$, thus $y=\qJ\setb{b\in B}{b\lll y}$.

  For the second claim, we prove $\forall y\in Q.\qwb_{P_\qj}y$ is
  directed and $y=\qJ\qwb_{P_\qj}y$. The set $\qwb_{P_\qj}y$ is
  directed because $P_\qj$ is closed under finite joins.  Let
  $S\defeq\setb{p\in P}{p\lll y}$. Then, $\qJ S=y=\qJ\qwb y$. But
  $S\subseteq\qwb_{P_\qj}y\subseteq\qwb y$, thus $y=\qwb_{P_\qj}y$.
\end{proof}

\begin{example}
  The following examples demonstrate proper inclusion between some
  lattice-related concepts:
  \begin{enumerate}
  \item The diamond lattice $M_3 \defeq \setf{\bot, x, y, z, \top}$
    with  $\bot<x,y,z<\top$ is a continuous lattice, but not
    prime-continuous, because it is not distributive.

  \item The poset $O(\Q)$ of open subsets of the rational numbers
    (with the Euclidean topology) ordered by inclusion is a locale,
    but not a continuous lattice, because $\Q$ is not locally compact.
    
  \item Recall that an open set $W$ is said to be regular if it is the
    interior of its closure, {\ie}, $W = \Int{\cl{W}}$. The
    poset $RO([0,1])$ of the regular open subsets of the unit interval
    ordered by inclusion is a complete boolean algebra, thus a
    quantale, which is continuous, but not prime-continuous.
  \end{enumerate}
\end{example}

Products of (prime-)continuous lattices are (prime-)continuous; the
complete lattice of Scott continuous maps between (prime-)continuous
lattices is (prime-)continuous.  Thus (prime-)continuous lattices form
full sub-CCCs of the category $\CL$ of complete lattices and
Scott continuous maps.
It is well-known that continuous domains coincide with the Scott
continuous retracts of algebraic domain~\parencite[Ch VII, Sec
  2.3]{Johnstone86}.  When domains are replaced by complete lattices,
a stronger result holds, namely continuous lattices coincide with the
Scott continuous retracts of prime-algebraic lattices.

\begin{proposition}
  In the category $\CL$ of complete lattices and Scott continuous
  maps, every continuous lattice is a retract of a prime-algebraic
  lattice.
\end{proposition}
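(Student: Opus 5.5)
Given a continuous lattice $(Q,\qle)$, I take as the prime-algebraic lattice the powerset $\PS(Q)$ ordered by inclusion; more economically one could use the lattice of lowersets of $(Q,\qle)$. Both are prime-algebraic: every subset is the union of its singletons (resp.\ every lowerset is the union of the principal lowersets $\qLowerSet x$). A singleton (resp.\ principal lowerset) is totally below any set containing it, since a union covers $\{x\}$ only if some member contains $x$. I would work with lowersets $L(Q)$, since then the embedding below is monotone.

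I then define the two maps of the retraction. The section is $s:Q\to L(Q)$, $s(y)\defeq\qwb y$. The retraction is $r:L(Q)\to Q$, $r(A)\defeq\qJ A$. The first step is to check $r\circ s=\id_Q$: we have $r(s(y))=\qJ\qwb y=y$, precisely by continuity of $Q$ (Definition~\ref{def:continuous_prime_cont_latt}). Here $\qwb y$ is a lowerset by Proposition~\ref{prop:orders_and_approx}(2), so $s$ lands in $L(Q)$, and $s$ is monotone by the same item.

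The second step is Scott continuity of $r$. In fact $r$ preserves all joins, because joins in $L(Q)$ are unions and $\qJ\bigcup_i A_i=\qJ_i\qJ A_i$. So $r$ is trivially Scott continuous.

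The third step, which I expect to be the main obstacle, is Scott continuity of $s$: for directed $D\subseteq Q$ we need $\qwb(\qJ D)=\bigcup_{d\in D}\qwb d$. The inclusion $\supseteq$ follows from monotonicity. For $\subseteq$, take $x\ll\qJ D$ and apply interpolation (Proposition~\ref{prop:int:1}, with base $B=Q$) to obtain $q$ with $x\ll q\ll\qJ D$. By the definition of way-below, applied to the directed set $D$, there is some $d\in D$ with $q\qle d$. Hence $x\ll d$ by Proposition~\ref{prop:orders_and_approx}(2), so $x\in\qwb d$. Together these steps exhibit $Q$ as a Scott continuous retract of the prime-algebraic lattice $L(Q)$ in $\CL$.
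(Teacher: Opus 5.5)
Your proof is correct, and it uses the same pair of maps as the paper: the section $y\mapsto\qwb y$ and the retraction $A\mapsto\qJ A$. The differences are the intermediate lattice and how Scott continuity is obtained.

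The paper takes a base $B$ closed under finite joins and factors through the ideal completion $\Idl(B,\qle)$, landing in $\PS(|B|)$. There Scott continuity comes for free from two adjunctions: $\qwb_B$ is left adjoint to $\qJ$ on ideals, and ideal generation is left adjoint to the inclusion $\Idl(B,\qle)\to\PS(|B|)$.

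You instead use the lowerset lattice of all of $Q$. This is $\RD(Q,\qle)$, which is prime-algebraic as the paper notes after Theorem~\ref{thm:apb:pcl}, and you check continuity of $s$ by hand. Your route is more elementary and self-contained. The paper's route gives a smaller prime-algebraic lattice when $Q$ has a small base, e.g.\ $\PS(\omega)$ for a countable base, which fits its interest in countably based quantales.

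Two minor remarks. First, your third step does not need interpolation. The set $I\defeq\bigcup_{d\in D}\qwb d$ is directed and $\qJ I=\qJ D$, so $x\ll\qJ D$ directly gives $x\qle e\ll d$ for some $d\in D$. This is exactly the content of the adjunction $\qwb\dashv\qJ$ that the paper relies on. Second, $s$ would also be monotone as a map into $\PS(Q)$, so preferring lowersets to the full powerset is a cosmetic choice rather than a necessity.
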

\begin{proof}
  If $B$ is a base for the continuous lattice $(X,\qle)$ and
  $\Idl(B,\qle)$ denotes the ideal completion of the join-semilattice
  $(B,\qle)$, then $\qwb_B:(X,\qle)\to \Idl(B,\qle)$ has a
  right-adjoint $R(I) \defeq \qJ I$ in $\CL$ (see
  also~\parencite[Proposition~3.1.6]{AbramskyJung94-DT}).
  The inclusion $\Idl(B,\qle)\to \PS(|B|)$ has a left-adjoint in $\CL$,
  namely, $L(A) \defeq$ the ideal in $(B,\qle)$ generated by $A$.
  Hence, $(X,\qle)$ is a retract of the prime-algebraic lattice
  $\PS(|B|)$.
\end{proof}

\subsection{Topologies and Metric Spaces}

\begin{definition}
  \label{def:top_base_subbase}
  Given a topological space $(X,\tau)$ and a set $B\subseteq\tau$ of
  its open subsets, we say that:
\begin{itemize}
\item $B$ is a \emph{base} for $\tau$ $\defiff$ every open set in
  $\tau$ is the union of a family of open sets in $B$.
\item $B$ is a \emph{sub-base} for $\tau$ $\defiff$ $\tau$ is the
  smallest topology generated by (\ie, containing) $B$, or
  equivalently, the closure of $B$ under finite intersections is a
  base for $\tau$.
\end{itemize}
Given $A\subseteq X$, we write $\Int[\tau]{A}$ for the interior of
$A$, \ie, the biggest $O\in\tau$ contained in $A$, and omit the
subscript $\tau$ when the topology is clear from the context.

The \emph{specialization preorder} $\leq_\tau$ on $X$ induced by
$\tau$ is $x\leq_\tau y\defiff\forall O\in B.x\in O\implies y\in O$,
where $B$ is any sub-base for $\tau$.
\end{definition}

\begin{remark}
In Definition~\ref{def:top_base_subbase},  the preorder $\leq_\tau$
does not depend on the choice of sub-base $B$. In fact, $x \leq_\tau y
\iff x \in \cl{\setf{y}}$.
\end{remark}

Diagram~\eqref{diag:forget1} below describes the relations between
$\Top$, $\Po$ and $\Set$, where:
\begin{itemize}
\item $U:\Top\to\Set$ and $U:\Po\to\Set$ are the topological functors
  defined in Example~\ref{example:TF}.
\item $P:\Top\to\Po$ is the functor defined by
  $P(X,\tau)\defeq(X,\leq_\tau)$ and on arrows is hom-sets inclusion.
\item $L_P:\Po\to\Top$ is the functor defined by
  $L_P(X,\leq)\defeq (X,\tau_\leq)$, where $\tau_\leq$ is the set of
  uppersets in $(X,\leq)$ (also known as the Alexandroff topology),
  and on arrows is hom-sets inclusion.
\end{itemize}

\begin{equation}
\begin{tikzcd}[row sep = large, column sep = huge]
  \Top \arrow[r, "P"'] \arrow[rd, "U"'] & \Po
  \arrow[l,bend right,dashed,"\bot","L_P"'] \arrow[d, "U"]\\
  &\Set
\end{tikzcd}
\label{diag:forget1}
\end{equation}
The functor $P$ is faithful and $L_P$ is a left-adjoint to $P$ and
also a $P$-section (\ie, $P\circ L_P=\id_\Po$). Since $L_P$ is
full\&faithful, it follows that $\Po$ is (isomorphic to) a
coreflective sub-category of $\Top$.  The functor $P$ does not have a
right-adjoint. Therefore, it is not topological.

We recall the definition of a quantale-valued metric space (see
\parencite{Dagnino_Farjudian_Moggi:Robust_Topology:arXiv:2025}), and define
the categories $\MS_c$ and $\MS_u$ with continuous and uniformly
continuous maps, respectively. We do this by mimicking the standard
definition of (uniformly) continuous map using epsilon-delta in the
context of continuous quantales.

\begin{definition}[Quantale-valued Metric Space]
  \label{def:Quantale_valued_metric_spaces}
  A \emph{quantale-valued metric space} is a triple $(X,d,Q)$
  consisting of a set $X$, a continuous quantale $(Q,\qle,\qT,\qI)$
  and a map $d:X^2\to Q$, called a \emph{metric}, such that
  \begin{equation*}
    \forall x\in X.\ \qI\qle d(x,x)\quad\mbox{and}\quad
  \forall x,y,z\in Q.\ d(x,y)\qT d(y,z)\qle d(x,z).
  \end{equation*}
  Its \emph{dual} is the metric space $(X,d^o,Q^o)$, where $Q^o$ is
  the dual of the continuous quantale $Q$ and $d^o(x,y)\defeq d(y,x)$.

  Given two quantale-valued metric spaces $(X,d,Q)$ and $(X',d',Q')$
  and a map $f:X\to X'$, we say that:
  \begin{itemize}
  \item $f$ is \emph{continuous} $\defiff\forall x\in
    X.\forall\epsilon\ll'\qI'.\exists\delta\ll\qI.  \forall y\in
    X.\delta\ll d(x,y)\implies\epsilon\ll' d'(f(x),f(y))$.
  \item $f$ is \emph{uniformly continuous}
    $\defiff\forall\epsilon\ll'\qI'.\exists\delta\ll\qI.  \forall
    x,y\in X.\delta\ll d(x,y)\implies\epsilon\ll' d'(f(x),f(y))$.
  \end{itemize}
  $\MS_c$ denotes the category of quantale-valued metric spaces and
  continuous maps, and $\MS_u$ denotes the sub-category of
  quantale-valued metric spaces and uniformly continuous maps.
\end{definition}

For every metric space $(X, d, Q)$, it is straightforward to verify
that its dual $(X,d^o,Q^o)$ is also a metric space.  This
dual induces an involution on $\MS_u$, but not on $\MS_c$.
\begin{proposition}
  \label{prop:Met_u_dual_involution}
  The dual on metric spaces extends to an involution on $\MS_u$ such that
  $f^o=f$.
\end{proposition}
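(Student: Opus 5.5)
We have to show three things: the dual of an object of $\MS_u$ is again an object; if $f:\MS_u((X,d,Q),(X',d',Q'))$ then the same underlying map $f$ is in $\MS_u((X,d^o,Q^o),(X',d'^o,Q'^o))$; and applying the dual twice returns the original data. Functoriality is then automatic, because $f^o=f$ preserves identities and composition.

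\textbf{Objects and involutivity.} The paper already notes that $(X,d^o,Q^o)$ is a metric space, but we check it quickly. The quantale $Q^o$ has the same complete lattice $(Q,\qle)$ as $Q$, with multiplication $x\qT^o y= y\qT x$ and the same unit $\qI$. It is a quantale because distributivity is symmetric in the two arguments. It is continuous because continuity depends only on $(Q,\qle)$. Reflexivity $\qI\qle d^o(x,x)=d(x,x)$ is immediate. For transitivity we compute
\[
d^o(x,y)\qT^o d^o(y,z)=d^o(y,z)\qT d^o(x,y)=d(z,y)\qT d(y,x)\qle d(z,x)=d^o(x,z).
\]
Since $(\qT^o)^o=\qT$ and $(d^o)^o=d$, we get $((X,d,Q)^o)^o=(X,d,Q)$ on the nose.

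\textbf{Arrows.} This is the only step with content, and it is where we see why $\MS_c$ fails. The way-below relation $\ll$ depends only on the lattice $(Q,\qle)$, so it is the same in $Q$ and $Q^o$, and both have unit $\qI$. Now assume $f$ is uniformly continuous from $(X,d,Q)$ to $(X',d',Q')$. Given $\epsilon\ll'\qI'$, choose $\delta\ll\qI$ as provided by uniform continuity. For any $x,y\in X$ with $\delta\ll d^o(x,y)=d(y,x)$, apply the hypothesis to the pair $(y,x)$; this is legitimate because the condition quantifies over all pairs. It gives $\epsilon\ll' d'(f(y),f(x))=d'^o(f(x),f(y))$, so $f$ is uniformly continuous between the duals.

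For plain continuity the same swap would turn $\delta$ into something depending on $y$ rather than on the fixed point $x$. That is exactly why the dual is an involution on $\MS_u$ and not on $\MS_c$. With these three steps, $(-)^o$ is an endofunctor of $\MS_u$ and it is its own inverse.
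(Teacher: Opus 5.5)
Your proposal is correct and follows the paper's proof. The key step is the same: swap the pair $(x,y)$ to $(y,x)$ in the uniform-continuity condition, using that $\ll$ and $\qI$ are unchanged when passing to $Q^o$. Your checks on objects, involutivity and functoriality spell out details the paper treats as straightforward.
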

\begin{proof}
  Assume that $f: \MS_u( (X_1, d_1, Q_1), ( X_2, d_2, Q_2))$. By
  definition of uniform continuity, we have:
  \begin{equation*}
    \forall\epsilon\ll_2 \qI_2.\exists\delta\ll_1\qI_1.\forall
    x,y\in X_1.\ \delta\ll_1 d_1(x,y)\implies\epsilon\ll_2 d_2(f(x),f(y)).
  \end{equation*}
  By the equivalences $\delta \ll_1 d_1^o( x, y) \iff \delta \ll_1
  d_1( y, x)$ and $\epsilon\ll_2 d_2^o(f(x),f(y)) \iff \epsilon\ll_2
  d_2(f(y),f(x))$, we conclude $f: \MS_u( (X_1, d_1^o, Q_1^o), ( X_2,
  d_2^o, Q_2^o))$.
\end{proof}

\begin{example}
  \label{example:Met_c_dual_no_involution}
  Consider the metric space $M \defeq ( [0,1], d, \RQ_+)$, in which:
  \begin{itemize}
  \item $\RQ_+$ is the quantale $([0,+\infty],\geq,+,0)$ on the
    extended non-negative reals~\parencite{lawvere1973metric};
  \item $\forall x,y \in [0,1]: \ d( x, y) \defeq \max( x-y, 0)$.
  \end{itemize}
  The function $f: M \to M$ such that $f(x)=\mbox{$0$ if $x=0$ else
    $1$}$ is continuous, \ie, $f : \MS_c( M, M)$, but $f=f^o$ is not
  continuous as a map on $M^o = ( [0,1], d^o, \RQ_+)$, where $d^o( x,
  y) = d( y, x)$.
  More precisely, $f: M^o \to M^o$ is not continuous at $x = 0$. To that
  end, take $\epsilon\in (0,1)$, then for any $\delta \in ( 0, 1]$ and
  $y \in (0, \delta)$, we have $d^o( 0, y) = d( y, 0) = y < \delta$,
  but $d^o( f( 0), f( y)) = d( f( y) , f( 0)) = d( 1, 0) = 1 > \epsilon$.
\end{example}

\subsection{Quantales of Binary Relations}
\label{sec:BR}

In this section, we recall some general facts about binary relations
which will be used in subsequent sections.
\begin{proposition}\label{prop:BR}
  Given a set $X$:
  \begin{enumerate}
  \item The quadruple $(\PS(X^2),\subseteq,\qT,\Delta)$ is a quantale,
    where
    $R_0\qT R_1\defeq\setb{(x,z)}{\exists y\in X.(x,y)\in R_0\land
      (y,z)\in R_1}$ is composition of binary relations and
    $\Delta\defeq \setb{(x,x)}{x\in X}$ is the diagonal relation.

  \item The map $-^o:\PS(X^2)\to\PS(X^2)$ such that
    $R^o\defeq\setb{(y,x)}{(x,y)\in R}$ is the dual of $R$, is a strict-monoidal
    isomorphism from $(\PS(X^2),\subseteq,\qT,\Delta)$ to its dual,
    whose tensor is $R_0\qT^o R_1\defeq R_1\qT R_0=(R_0^o\qT
    R_1^o)^o$, moreover $-^o$ preserves joins and meets, \ie, $(\cup_i
    R_i)^o=\cup_i R_i^o$ and $(\cap_i R_i)^o=\cap_i R_i^o$.

\item If all $R_i\in\PS(X^2)$ (for $i\in I$) have any of the
  following properties, then so does $\cap_i R_i$: reflexivity
  ($\Delta\subseteq R$), symmetry ($R^o\subseteq R$), transitivity
  ($R^2\subseteq R$).
  \end{enumerate}
\end{proposition}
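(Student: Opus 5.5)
The proof is a direct verification of the three claims from the definitions, working with elements $(x,z)$ of $X^2$ throughout.

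For the first claim, $(\PS(X^2),\subseteq)$ is a complete lattice, with joins given by unions and meets by intersections. Composition $\qT$ is associative: $(x,w)\in(R_0\qT R_1)\qT R_2$ iff there are $y,z$ with $(x,y)\in R_0$, $(y,z)\in R_1$ and $(z,w)\in R_2$, and $R_0\qT(R_1\qT R_2)$ unfolds to the same condition. The diagonal $\Delta$ is a two-sided unit, since $(x,z)\in\Delta\qT R$ iff $(x,z)\in R$, and symmetrically on the right. For distributivity, I will unfold membership in $(\bigcup A)\qT(\bigcup B)$: it means there exist $y$, $R\in A$ and $S\in B$ with $(x,y)\in R$ and $(y,z)\in S$. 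Since existential quantifiers commute, this is exactly membership in $\bigcup\setb{R\qT S}{R\in A\land S\in B}$. The cases $A=\emptyset$ or $B=\emptyset$ are included, as both sides are then empty.

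For the second claim, $(R^o)^o=R$, so $-^o$ is a bijection. It is monotone in both directions, hence an order isomorphism, and therefore it preserves all joins and meets; this can also be checked elementwise. For monoidality, $\Delta^o=\Delta$. To get $(R_0\qT R_1)^o=R_1^o\qT R_0^o=R_0^o\qT^o R_1^o$, I note that $(z,x)\in(R_0\qT R_1)^o$ iff $\exists y.(x,y)\in R_0\land(y,z)\in R_1$, iff $\exists y.(z,y)\in R_1^o\land(y,x)\in R_0^o$. The remaining identity $R_1\qT R_0=(R_0^o\qT R_1^o)^o$ follows by applying this equation to $R_0^o,R_1^o$ and using that $-^o$ is an involution.

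For the third claim, each property is stable under intersection for the following reasons.
\begin{itemize}
\item Reflexivity is stable because $\Delta\subseteq R_i$ for all $i$ gives $\Delta\subseteq\bigcap_i R_i$.
\item Symmetry is stable because $(\bigcap_i R_i)^o=\bigcap_i R_i^o\subseteq\bigcap_i R_i$, using the previous claim.
\item Transitivity is stable because composition is monotone, which follows from distributivity. Hence $(\bigcap_i R_i)^2\subseteq R_j^2\subseteq R_j$ for every $j$, and so $(\bigcap_i R_i)^2\subseteq\bigcap_j R_j$.
\end{itemize}
The empty intersection, namely $X^2$, trivially has all three properties.

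I do not expect a real obstacle. The only points needing care are the empty-family edge cases and keeping the order of factors straight in the dual tensor $\qT^o$.
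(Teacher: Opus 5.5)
Your verification is correct and complete, including the empty-family edge cases and the order of factors in $\qT^o$. The paper gives no proof of this proposition; it presents it as a recalled standard fact about binary relations. Your direct elementwise check is the argument the paper implicitly relies on.
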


We define several lax-monoidal maps between quantales of binary
relations.  Therefore, thanks to Proposition~\ref{prop:lax}, we get
that such maps preserve certain properties of binary relations, such
as reflexivity and transitivity.

\begin{proposition}
  \label{prop:BR:dagger}
  Given a map $f:X\to Y$, the map
  $f^\dagger\defeq(f\times
  f)^{-1}:\PS(Y^2)\to\PS(X^2)$
  is lax-monoidal.
\end{proposition}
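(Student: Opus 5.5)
We unfold Definition~\ref{def:monoidal:maps} for $f^\dagger$, where $(x,x')\in f^\dagger(S)\iff(f(x),f(x'))\in S$ for $S\subseteq Y^2$. Three things must be checked: monotonicity of $f^\dagger$ with respect to $\subseteq$, preservation of the unit up to inclusion, and the lax inequality for composition. Each is a direct element-chasing argument in the quantale of binary relations of Proposition~\ref{prop:BR}.

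Monotonicity holds because inverse images preserve inclusions: if $S\subseteq S'$, then $(f\times f)^{-1}(S)\subseteq(f\times f)^{-1}(S')$.

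For the unit, we need $\Delta_X\subseteq f^\dagger(\Delta_Y)$. Given $(x,x)\in\Delta_X$, we have $(f(x),f(x))\in\Delta_Y$, so $(x,x)\in f^\dagger(\Delta_Y)$. The reverse inclusion fails when $f$ is not injective, which is why $f^\dagger$ is only lax and not strict.

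For the tensor, we need $f^\dagger(S_0)\qT f^\dagger(S_1)\subseteq f^\dagger(S_0\qT S_1)$. Take $(x,z)$ in the left-hand side, with a witness $y\in X$ such that $(f(x),f(y))\in S_0$ and $(f(y),f(z))\in S_1$. Then $f(y)\in Y$ witnesses $(f(x),f(z))\in S_0\qT S_1$, hence $(x,z)\in f^\dagger(S_0\qT S_1)$. Again only an inclusion holds in general, since a witness in $Y$ need not lie in the image of $f$.

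None of these steps is a real obstacle; the only point needing care is the direction of the inequalities, which determines that $f^\dagger$ is lax- rather than strict-monoidal. It is also worth remarking that $f^\dagger$ preserves arbitrary unions and intersections, and that it commutes with $-^o$. These facts are not needed here, but will be useful together with Proposition~\ref{prop:lax} for transferring reflexivity and transitivity of entourages along $f^\dagger$.
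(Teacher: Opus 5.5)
Your proposal is correct and follows essentially the same argument as the paper: monotonicity from inverse images, then the unit inclusion $\Delta_X\subseteq f^\dagger(\Delta_Y)$, then the tensor inclusion using $f(y)$ as the witness in $Y$. Your remarks on why only inclusions hold in general, and that $f^\dagger$ commutes with $-^o$, are also accurate.
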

\begin{proof}
  Since $f^\dagger$ is an inverse image map, it is monotone (and
  preserves unions and intersections).  Moreover, it is lax-monoidal
  because: $\forall x\in X.(x,x)\in
  f^\dagger(\Delta_Y)=\setb{(x,x')}{f(x)=f(x')}$ and $(x_0,x_1)\in
  f^\dagger(R_0)\land (x_1,x_2)\in f^\dagger(R_1)\iff
  (f(x_0),f(x_1))\in R_0\land (f(x_1),f(x_2))\in R_1\implies
  (f(x_0),f(x_2))\in (R_0\qT_Y R_1) \implies ( x_0, x_2 ) \in
  f^\dagger(R_0\qT_Y R_1)$.
\end{proof}

\begin{proposition}\label{prop:BR:liftS}
  The map $-_S:\PS(X^2)\to\PS(\PS(X)^2)$ such that $(A,A')\in
  R_S\defiff A'\subseteq R(A)$, where $R(A)\defeq\setb{x'\in
    X}{\exists x\in A.(x,x')\in R}$, is lax-monoidal.
\end{proposition}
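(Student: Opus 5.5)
We have to check three things: that $-_S$ is monotone, that $\Delta_{\PS(X)}\subseteq \Delta_S$, and that $R_S\qT R'_S\subseteq (R\qT R')_S$. Here $\Delta_{\PS(X)}$ is the diagonal on $\PS(X)$, and the composition on the left is taken in the quantale $\PS(\PS(X)^2)$ of Proposition~\ref{prop:BR}. All three reduce to elementary facts about the image operator $R(A)$. So the plan is first to record these facts and then read off each condition directly from the definition $(A,A')\in R_S\iff A'\subseteq R(A)$.

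\textbf{Image lemma.} We need three properties of images.
\begin{itemize}
\item Monotonicity in the relation: if $R\subseteq R'$ then $R(A)\subseteq R'(A)$.
\item Monotonicity in the set: if $A\subseteq B$ then $R(A)\subseteq R(B)$.
\item Composition: $R'(R(A))=(R\qT R')(A)$, with $\qT$ the relational composition of Proposition~\ref{prop:BR}, i.e.\ first $R$, then $R'$.
\end{itemize}
The composition identity follows by unfolding: $z\in R'(R(A))$ iff there are $y$ and $x\in A$ with $(x,y)\in R$ and $(y,z)\in R'$, which is exactly $z\in(R\qT R')(A)$. We also note $\Delta(A)=A$.

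\textbf{Monotonicity of $-_S$.} If $R\subseteq R'$ and $A'\subseteq R(A)$, then $A'\subseteq R'(A)$ by the first property. Hence $R_S\subseteq R'_S$.

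\textbf{Unit.} For any $A\subseteq X$ we have $A\subseteq A=\Delta(A)$, so $(A,A)\in\Delta_S$. Hence $\Delta_{\PS(X)}\subseteq\Delta_S$. In fact $\Delta_S$ is the inclusion preorder $\setb{(A,A')}{A'\subseteq A}$, which contains the diagonal.

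\textbf{Lax tensor.} Suppose $(A,A'')\in R_S\qT R'_S$. Then there is $A'$ with $A'\subseteq R(A)$ and $A''\subseteq R'(A')$. By the second property $R'(A')\subseteq R'(R(A))$, and by the composition identity this equals $(R\qT R')(A)$. So $A''\subseteq (R\qT R')(A)$, i.e.\ $(A,A'')\in(R\qT R')_S$.

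There is no real obstacle. The only point needing care is the orientation of composition: $R_0\qT R_1$ means "first $R_0$, then $R_1$", and this must match the order in which the image operators are applied, $R'(R(A))$. With the convention of Proposition~\ref{prop:BR} it matches, so the inclusion goes the right way.
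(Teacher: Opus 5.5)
Your proposal is correct and follows the paper's proof essentially verbatim: monotonicity from $R\subseteq R'\implies R(A)\subseteq R'(A)$, the unit from $(\Delta_X)_S$ being the reverse-inclusion preorder, and the lax tensor via $A''\subseteq R'(A')\subseteq R'(R(A))=(R\qT R')(A)$. The only difference is that you make explicit the monotonicity of $R'(-)$ in its set argument, which the paper uses silently.
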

\begin{proof}
  Monotonicity follows from $R\subseteq R'\implies R(A)\subseteq
  R'(A)$.
  Moreover, $-_S$ is lax-monoidal because:
  $\forall
  A,A'\in\PS(X).(A,A')\in(\Delta_X)_S\iff A'\subseteq A$ and
  $(A,A'')\in R_S\qT R'_S\iff \exists A'.A'\subseteq R(A)\land
  A''\subseteq R'(A')\implies A''\subseteq R'(R(A))=(R\qT R')(A)$.
\end{proof}

The lax-homomorphism $-_S$ is an example of a lax-homomorphism
$\PS(X^2)\to\PS(\M(X)^2)$ induced by a \emph{relator} for an
endofunctor $\M$ on $\Set$ (see~\parencite[Definition 19]{gavazzo2019phd}), in
this case $\M$ is the covariant powerset functor.  Moreover, the
following closure properties of lax-homomorphisms between quantales of
binary relations should be compared with similar closure properties of
relators for an endofunctor in~\parencite[Prop 8]{gavazzo2019phd}.

\begin{proposition}\label{prop:BR:main}\
  \begin{enumerate}
  \item If $F_i:\PS(X^2)\to\PS(Y^2)$ is lax-monoidal for every $i\in
    I$, then so is $F(R)\defeq\cap_i F_i(R)$.
  \item If $F:\PS(X^2)\to\PS(Y^2)$ is lax-monoidal (strict-monoidal),
    then so is $F^o(R)\defeq F(R^o)^o$.
  \end{enumerate}
\end{proposition}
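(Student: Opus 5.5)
Both claims are verified directly from Definition~\ref{def:monoidal:maps}, using only that $\PS(Y^2)$ is a complete lattice under $\subseteq$ and the properties of $-^o$ in Proposition~\ref{prop:BR}.

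\textbf{Claim 1 (intersection).} Monotonicity of $F(R)=\cap_i F_i(R)$ follows from monotonicity of each $F_i$ and of intersection. For the unit, every $F_i$ is lax-monoidal, so $\Delta_Y\subseteq F_i(\Delta_X)$ for all $i$, hence $\Delta_Y\subseteq\cap_i F_i(\Delta_X)=F(\Delta_X)$. For the tensor, composition is monotone in each argument, so $F(R_0)\qT F(R_1)\subseteq F_i(R_0)\qT F_i(R_1)\subseteq F_i(R_0\qT R_1)$ for every $i$. Taking the intersection over $i$ gives $F(R_0)\qT F(R_1)\subseteq F(R_0\qT R_1)$. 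The edge case $I=\emptyset$, where $F(R)=Y^2$, is trivially lax-monoidal.

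\textbf{Claim 2 (dual).} $F^o(R)=F(R^o)^o$ is monotone because $-^o$ is monotone: it preserves unions by Proposition~\ref{prop:BR}. For the unit, $\Delta_X^o=\Delta_X$, so $F^o(\Delta_X)=F(\Delta_X)^o\supseteq\Delta_Y^o=\Delta_Y$, with equality in the strict case. For the tensor, the key identity is $(R_0\qT R_1)^o=R_1^o\qT R_0^o$, which is the content of Proposition~\ref{prop:BR}(2). Using it,
\begin{equation*}
F^o(R_0)\qT F^o(R_1)=F(R_0^o)^o\qT F(R_1^o)^o=\bigl(F(R_1^o)\qT F(R_0^o)\bigr)^o\subseteq F(R_1^o\qT R_0^o)^o=F\bigl((R_0\qT R_1)^o\bigr)^o=F^o(R_0\qT R_1).
\end{equation*}
The inclusion uses lax-monoidality of $F$ applied to the pair $(R_1^o,R_0^o)$, together with monotonicity of $-^o$. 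In the strict case the inclusion becomes an equality, so $F^o$ is again strict-monoidal.

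The only step needing care is getting the order reversal right in Claim~2, namely applying the lax inequality to $(R_1^o,R_0^o)$ rather than $(R_0^o,R_1^o)$; everything else is routine.
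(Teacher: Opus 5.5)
Your proof is correct and follows essentially the same route as the paper. For the intersection you verify each lax-monoidal condition componentwise; for the dual you use $(R_0\qT R_1)^o=R_1^o\qT R_0^o$ to reduce the tensor inequality to lax-monoidality of $F$ at $(R_1^o,R_0^o)$, with equalities in the strict case.
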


\begin{proof}
  For the first implication, each property of $F$ follows from
  the corresponding property of $F_i$ for all $i\in I$:
  \begin{itemize}
  \item $R_0\subseteq R_1\implies F(R_0)\subseteq F(R_1)$, because
    $R_0\subseteq R_1\implies$ (by definition of $F(R_0)$ and $F_i$
    monotone)\\ $\forall i.F(R_0)\subseteq F_i(R_0)\subseteq
    F_i(R_1)\implies$ (by definition of $F(R_1)$)\\ $F(R_0)\subseteq
    F(R_1)$.

  \item $\Delta_Y\subseteq F(R)$, because $\Delta_Y\subseteq
    F(R)\iff\forall i \in I.\ \Delta_Y\subseteq F_i(R)$ by definition
    of $F$.

\item $F(R_0)\qT F(R_1)\subseteq F(R_0\qT R_1)$, because
  $\forall i \in I.\ F(R_0)\qT F(R_1)\subseteq F_i(R_0)\qT
  F_i(R_1)\subseteq F_i(R_0\qT R_1)$ by definition of $F$,
  monotonicity of $\qT$, and $F_i$ being lax-monoidal. This is
  equivalent to $F(R_0)\qT F(R_1)\subseteq F(R_0\qT R_1)$, by
  definition of $F$.
\end{itemize}

For the second implication as well, each property of $F^o$ follows
from the corresponding property of $F$:
  \begin{itemize}
  \item $R_0\subseteq R_1\implies F^o(R_0)\subseteq F^o(R_1)$, because
    $R_0\subseteq R_1\iff R_0^o\subseteq R_1^o\implies$ (by
    monotonicity of $F$)\\ $F(R_0^o)\subseteq F(R_1^o)\iff F(R_0^o)^o\subseteq
    F(R_1^o)^o$.
  \item $\Delta_Y\subseteq F^o(\Delta_X)$, because
    $F(\Delta_X^o)^o=F(\Delta_X)^o$ and $\Delta_Y\subseteq
    F(\Delta_X)\iff\Delta_Y\subseteq F(\Delta_X)^o$.
  \item $F^o(R_0)\qT F^o(R_1)\subseteq F^o(R_0\qT R_1)$, because
    $\forall R_0,R_1.(R_0\qT R_1)^o=R_1^o\qT R_0^o$ implies
    $F^o(R_0\qT R_1)=F(R_1^o\qT R_0^o)^o$ and $F^o(R_0)\qT
    F^o(R_1)=F(R_0^o)^o\qT F(R_1^o)^o=(F(R_1^o)\qT F(R_o^o))^o$,
    thus\\ $(F(R_1^o)\qT F(R_o^o))^o\subseteq F(R_1^o\qT R_0^o)^o\iff
    F(R_1^o)\qT F(R_0^o)\subseteq F(R_1^o\qT R_0^o)$ and the RHS
    follows from $\forall R_0,R_1.F(R_0)\qT F(R_1)\subseteq F(R_0\qT
    R_1)$.
  \end{itemize}
  In the case of $F$ being strict-monoidal, the proof is obtained by
  replacing $\subseteq$ with $=$ in the last two items.
\end{proof}

\section{Quasi-Uniform Spaces}
\label{sec:q_unif_spaces}

We recall the definition of the category $\QUnif$ of quasi-uniform
spaces and quasi-uniformly continuous maps (for more details see
\parencite{Fletcher_Lindgren:Quasi_Uniform:Book:1982}~and~\parencite[Chapter~E]{Hart_et_al:Encyclopedia_Topology:2003}).
Moreover, we prove that the category of topological
spaces and continuous maps is a coreflective
subcategory of the category of quasi-uniform spaces and
quasi-uniformly continuous maps.
The definitions exploit quantales of binary relations and properties
of lax-monoidal maps between them (see Section~\ref{sec:BR}).

\begin{definition}[Quasi-Uniform Space]
  \label{def:quasi_unif}
  A quasi-uniform space is a pair $(X,\QU)$ consisting of a set $X$
  and a subset $\QU\subseteq\PS(X^2)$ (called a
  \emph{quasi-uniformity}) of binary relations (called
  \emph{entourages}) with the properties:
  \begin{enumerate}
  \item $\QU$ is a filter in the complete lattice $(\PS(X^2),\subseteq)$, \ie
    \begin{itemize}
    \item $\forall R_1 \in \QU. \forall R_2\in\PS(X^2).\ R_1\subseteq
    R_2\implies R_2\in\QU$, \ie, $\QU$ is an upperset;
  \item $X^2\in\QU$ and $\forall R_1,R_2\in\QU.\ R_1\cap R_2\in\QU$,
    \ie, $\QU$ is closed under finite intersections.
    \end{itemize}
  \item Identity property, \ie, $\forall R\in\QU.\Delta\subseteq R$,
    where $\Delta$ is the diagonal on $X$.
  \item Composition property, \ie, $\forall R\in\QU.\exists
    R_0\in\QU.R_0^2\subseteq R$.
  \end{enumerate}
  Given two quasi-uniform spaces $(X,\QU)$ and $(X',\QU')$, a map
  $f:X\to X'$ is called \emph{quasi-uniformly continuous} from
  $(X,\QU)$ to $(X',\QU')$ when $\forall
  R'\in\QU'.f^\dagger(R')\in\QU$, where $f^\dagger$ is defined in
  Proposition~\ref{prop:BR:dagger}.

  We let $\QUnif$ denote the category of quasi-uniform spaces and
  quasi-uniformly continuous maps.
\end{definition}

By analogy with topologies, one can define a notion of base and
sub-base for quasi-uniformities.

\begin{definition}[Base and Sub-base]
  \label{def:QUnif_Base_Subbase}
  Given a quasi-uniformity $\QU$ on $X$ and a subset
  $B\subseteq\QU$, we say that:
  \begin{itemize}
  \item $B$ is a \emph{base} for $\QU$ $\defiff$ $\QU$ is the upperset in
    $\PS(X^2)$ generated by $B$, \ie, $\forall R\in\QU.\exists
    R'\in B.R'\subseteq R$.
  \item $B$ is a \emph{sub-base} for $\QU$ $\defiff$ $\QU$ is the
    filter in $\PS(X^2)$ generated by $B$, or equivalently, the
    closure of $B$ under finite intersections is a base for $\QU$.
  \end{itemize}
\end{definition}

The biggest quasi-uniformity on $X$ is the principal filter
$\upperSetOf{\Delta}$ in $\PS(X^2)$ generated by the diagonal $\Delta$
on $X$, while the smallest quasi-uniformity is the principal filter
$\upperSetOf{X^2} = \setf{X^2}$.

The following result gives an intrinsic characterization of bases for
quasi-uniformities on $X$.

\begin{proposition}[{\parencite[pp.~2]{Fletcher_Lindgren:Quasi_Uniform:Book:1982}}]
  \label{prop:qu:base}
  A non-empty subset $B\subseteq\PS(X^2)$ is a base for a
  quasi-uniformity on $X$ $\iff$
  \begin{equation*}
    (\forall R\in B.\Delta\subseteq R)\land
  (\forall R_0,R_1\in B.\exists R\in B.R\subseteq R_0\cap R_1)\land
  (\forall R\in B.\exists R_0\in B.R_0^2\subseteq R).
  \end{equation*}
\end{proposition}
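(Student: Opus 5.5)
I will prove the two implications separately, working directly from Definition~\ref{def:quasi_unif} and Definition~\ref{def:QUnif_Base_Subbase}. For ($\Rightarrow$), assume $B$ is a base for a quasi-uniformity $\QU$, so $B\subseteq\QU$ and every element of $\QU$ contains some element of $B$. The three conditions then follow by transporting the axioms of $\QU$ back into $B$. First, $\Delta\subseteq R$ for every $R\in B$ because $R\in\QU$ and $\QU$ has the identity property. Second, given $R_0,R_1\in B$, we have $R_0\cap R_1\in\QU$ since $\QU$ is a filter, and the base property yields $R\in B$ with $R\subseteq R_0\cap R_1$. Third, given $R\in B\subseteq\QU$, the composition property gives $R'\in\QU$ with $R'^2\subseteq R$. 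The base property gives $R_0\in B$ with $R_0\subseteq R'$. Monotonicity of composition in the quantale $\PS(X^2)$ (Proposition~\ref{prop:BR}) then gives $R_0^2\subseteq R'^2\subseteq R$.

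For ($\Leftarrow$), given a non-empty $B$ satisfying the three conditions, I define $\QU\defeq\setb{R\in\PS(X^2)}{\exists R'\in B.R'\subseteq R}$, the upperset generated by $B$. I then check that $\QU$ is a quasi-uniformity; once that is done, $B$ is a base for it by construction.
\begin{itemize}
\item $\QU$ is an upperset by definition.
\item $X^2\in\QU$ because $B$ is non-empty and every $R'\in B$ satisfies $R'\subseteq X^2$.
\item $\QU$ is closed under binary intersections: if $R'_0\subseteq R_0$ and $R'_1\subseteq R_1$ with $R'_i\in B$, the second condition gives $R\in B$ with $R\subseteq R'_0\cap R'_1\subseteq R_0\cap R_1$.
\item The identity property holds because each element of $\QU$ contains an element of $B$, and each element of $B$ contains $\Delta$.
\item The composition property holds: for $R\in\QU$ pick $R'\in B$ with $R'\subseteq R$, then $R_0\in B\subseteq\QU$ with $R_0^2\subseteq R'\subseteq R$.
\end{itemize}

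There is no real obstacle here. The only points needing care are the use of non-emptiness to obtain $X^2\in\QU$, and the use of monotonicity of $\qT$ in the composition step of ($\Rightarrow$).
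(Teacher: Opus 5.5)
Your proof is correct. Both directions are checked directly against Definitions~\ref{def:quasi_unif} and~\ref{def:QUnif_Base_Subbase}; non-emptiness of $B$ is used exactly where it is needed (to get $X^2$ into the generated upperset), and monotonicity of composition is used to push the composition property down to $B$. The paper gives no proof of its own, only a citation to Fletcher and Lindgren, and your argument is the standard direct verification behind that reference; the inclusion $B\subseteq\QU$ that the paper's notion of base requires is immediate, since each $R\in B$ contains itself.
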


In the rest of this section we relate $\QUnif$ to other categories
($\Top$, $\Po$ and $\Set$) by extending Diagram~\eqref{diag:forget1}.
Furthermore, we define an involution on $\QUnif$, which
extends/refines that on $\Po$. In Diagram~\eqref{diag:forget2} below:

\begin{itemize}
\item The functors $U:\Top\to\Set$, $U:\Po\to\Set$, $P:\Top\to\Po$ and
  $L_P:\Po\to\Top$ are those from Diagram~\eqref{diag:forget1}.
\item $U:\QUnif\to\Set$ is the strict and small topological functor defined in
    Theorem~\ref{thm:qu:TF}.
\item $T:\QUnif\to\Top$ is the faithful functor defined in
  Proposition~\ref{prop:qu}.
\item $L_T:\Top\to\QUnif$ is the $T$-section, which is left-adjoint to $T$ (see
  Theorem~\ref{thm:las:T})
\item $-^o:\QUnif\to\QUnif$ is the involution defined in
  Proposition~\ref{prop:qu}, and its relations with the involution
  on $\Po$ follow from Proposition~\ref{prop:qu:sp}~and~\ref{prop:aqu}.
\end{itemize}

\begin{equation}
  \begin{tikzcd}[row sep = large, column sep = huge]
    \QUnif \arrow[r, "T"'] \arrow[rd, "U"'] &
    \Top \arrow[l,bend right,dashed,"\bot","L_T"'] \arrow[d, "U"] \arrow[r, "P"']&
    \Po \arrow[l,bend right,dashed,"\bot","L_P"'] \arrow[ld, "U"] \\
  &\Set
  \end{tikzcd}
  \quad
  \begin{tikzcd}[row sep = large, column sep = huge]
  \Po\arrow[d, "-^o"'] \arrow[r, "L_T\circ L_P"]&
  \QUnif\arrow[d, "-^o"] \arrow[r, "P\circ T"]&
  \Po\arrow[d, "-^o"]\\
  \Po\arrow[r, "L_T\circ L_P"'] &\QUnif\arrow[r, "P\circ T"'] &\Po
\end{tikzcd}
\label{diag:forget2}
\end{equation}

To prove that the functor $U:\QUnif\to\Set$ is topological, we first
establish some properties of its fibers, where the fiber $\QUnif_X$ is
isomorphic to the poset of quasi-uniformities on $X$ ordered by the
superset relation.

\begin{proposition}\label{prop:quX:CL}
  Let $\QUnif_X$ be the poset of quasi-uniformities on $X$ ordered by
  $\QU\sqsubseteq\QV\defiff\QU\supseteq\QV$, then:
  \begin{enumerate}
  \item $\QUnif_X$ is a complete lattice, and the meet $\qM_{i\in
    I}\QU_i$ is the quasi-uniformity generated by the sub-base
    $\bigcup_{i\in I}\QU_i$.
  \item If $f:\Set(X,X')$, then the map
    $\QUnif_f:\QUnif_{X'}\to\QUnif_X$, sending a quasi-uniformity
    $\QU'$ on $X'$ to the upperset in $\PS(X^2)$ generated by
    $\setb{f^\dagger(R')}{R' \in \QU'}$, is meet preserving.
  \item If $f:\Set(X,X')$, $\QU\in\QUnif_X$, and $\QU'\in\QUnif_{X'}$,
    then
    $f:\QUnif((X,\QU),(X',\QU'))\iff\QU\sqsubseteq\QUnif_f(\QU')$.
  \end{enumerate}
\end{proposition}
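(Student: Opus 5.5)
The three claims will be proved in order, and the key ingredients are Proposition~\ref{prop:qu:base}, Proposition~\ref{prop:BR:dagger} and Proposition~\ref{prop:lax}.

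\emph{Claim 1.} It suffices to show that arbitrary meets exist in $\QUnif_X$; joins then follow from standard complete-lattice theory. Concretely, $\qJ$ is the intersection $\bigcap_i\QU_i$, which is easily checked to be a quasi-uniformity. Because the order is reverse inclusion, the meet should be the smallest quasi-uniformity containing every $\QU_i$. Let $\QV$ be the upperset generated by finite intersections $R_1\cap\dots\cap R_n$ with each $R_k\in\QU_{i_k}$; the empty intersection $X^2$ covers the case $I=\emptyset$. I will verify the three conditions of Proposition~\ref{prop:qu:base} for this base:
\begin{itemize}
\item reflexivity holds because each $R_k$ contains $\Delta$;
\item closure under binary intersection holds by construction;
\item for the composition property, pick $S_k\in\QU_{i_k}$ with $S_k^2\subseteq R_k$, and note that $(\bigcap_k S_k)^2\subseteq\bigcap_k S_k^2\subseteq\bigcap_k R_k$ by monotonicity of composition.
\end{itemize}
Any quasi-uniformity containing all $\QU_i$ is a filter, so it contains $\QV$. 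Hence $\QV$ is the meet, generated by the sub-base $\bigcup_i\QU_i$.

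\emph{Claim 2.} First, $\QUnif_f(\QU')$ must be a quasi-uniformity. Set $B=\setb{f^\dagger(R')}{R'\in\QU'}$ and apply Proposition~\ref{prop:qu:base}:
\begin{itemize}
\item reflexivity follows from $\Delta_X\subseteq f^\dagger(\Delta_{X'})\subseteq f^\dagger(R')$;
\item intersections are handled by $f^\dagger(R_0')\cap f^\dagger(R_1')=f^\dagger(R_0'\cap R_1')$;
\item composition follows from lax-monoidality via Proposition~\ref{prop:lax}: if $R_0'^2\subseteq R'$, then $f^\dagger(R_0')^2\subseteq f^\dagger(R')$.
\end{itemize}
For meet preservation, $\QUnif_f$ is monotone, which gives one inequality. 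For the other, I need every generator of $\QUnif_f(\qM_i\QU'_i)$ to lie in the meet $\qM_i\QUnif_f(\QU'_i)$. Such a generator is $f^\dagger(R_1'\cap\dots\cap R_n')$ with $R_k'\in\QU'_{i_k}$, and it equals $\bigcap_k f^\dagger(R_k')$ because $f^\dagger$ preserves intersections. This is a finite intersection of elements of $\bigcup_i\QUnif_f(\QU'_i)$, so it belongs to the meet. The empty family is consistent with this, since $f^\dagger(X'^2)=X^2$. Together with the monotone direction this gives equality.

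\emph{Claim 3.} This is just unfolding the definitions. The map $f$ is quasi-uniformly continuous iff $f^\dagger(R')\in\QU$ for all $R'\in\QU'$. Since $\QU$ is an upperset, that holds iff the upperset generated by these images is contained in $\QU$, i.e. iff $\QUnif_f(\QU')\subseteq\QU$. By the reversed order this reads $\QU\sqsubseteq\QUnif_f(\QU')$.

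The only mildly delicate step is in Claim~2: checking the composition property of $\QUnif_f(\QU')$ through lax-monoidality, and getting the direction of meet preservation right given the reversed order. Everything else is routine.
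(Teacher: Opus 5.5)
Your arguments for the three claims follow the paper's proof essentially step for step, and they are correct:
\begin{itemize}
\item the meet is the filter generated by $\bigcup_i\QU_i$, checked against Proposition~\ref{prop:qu:base};
\item $\QUnif_f$ is well-defined because $f^\dagger$ is lax-monoidal;
\item $\QUnif_f$ preserves meets because $f^\dagger$ preserves intersections;
\item Claim~3 is an unfolding of the definitions.
\end{itemize}

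There is, however, one false statement that you should delete. It is the aside that the join $\qJ_i\QU_i$ is the intersection $\bigcap_i\QU_i$, ``easily checked to be a quasi-uniformity''. An intersection of quasi-uniformities need not have the composition property. The natural candidate $\bigcup_i R_{0,i}$ fails because its square contains the cross terms $R_{0,i}\qT R_{0,j}$.

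Here is a concrete counterexample. On $X=\{a,b,c\}$ take the preorders $\leq_1\defeq\Delta\cup\{(a,b)\}$ and $\leq_2\defeq\Delta\cup\{(b,c)\}$, and the quasi-uniformities $\upperSetOf{\leq_1}$ and $\upperSetOf{\leq_2}$. Their intersection is the principal filter generated by $S\defeq\Delta\cup\{(a,b),(b,c)\}$. Every $R_0$ in it contains $S$, so $(a,c)\in R_0^2$, while $(a,c)\notin S$. Hence no $R_0$ in the intersection satisfies $R_0^2\subseteq S$.

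The actual join, with respect to $\sqsubseteq$, is the $\subseteq$-largest quasi-uniformity contained in $\bigcap_i\QU_i$, namely the meet of all upper bounds. In the example it is $\upperSetOf{(\Delta\cup\{(a,b),(b,c),(a,c)\})}$. For $I=\emptyset$ the intersection would moreover be all of $\PS(X^2)$, which is not a quasi-uniformity when $X\neq\emptyset$.

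You already observe that the existence of all meets makes $\QUnif_X$ a complete lattice. The sentence is therefore not load-bearing, and your proof is complete once it is removed.
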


\begin{proof}
  We prove that the meet $\qM_{i\in I}\QU_i$
  is the filter $\QU$ in $\PS(X^2)$ generated by
  $B\defeq\bigcup_{i\in I}\QU_i$, \ie, $B$ is a sub-base for $\QU$.
  First, we prove that $\QU\in\QUnif_X$.  By definition, $\QU$ is a
  filter.  The identity property $\forall R\in\QU.\Delta\subseteq R$
  hold, because $\forall R\in B.\Delta\subseteq R$.
  For the composition property, consider $R'\in\QU$:
  \begin{itemize}
  \item by definition of $\QU$, we have $\bigcap_{i\in
    I_0}R'_i\subseteq R'$ for some $I_0\subseteq_f I$ and
    $I_0$-indexed family $(R'_i\in\QU_i|i \in I_0)$;
  \item by the composition property of $\QU_i$, for each $i\in I_0$
    there exists an $R_i\in\QU_i$ such that $R_i^2\subseteq R'_i$;
    
  \item let $R\defeq\bigcap_{i\in I_0}R_i\in\QU$, then
    $R^2\subseteq\bigcap_{i\in I_0}R_i^2\subseteq\bigcap_{i\in
    I_0}R'_i\subseteq R'$.
  \end{itemize}
  Finally, to prove that $\QU$ is the meet $\qM_{i\in I}\QU_i$, it
  suffices to observe that for every $\QU'\in\QUnif_X$ we have
  the following chain of equivalences:
  $\QU\subseteq\QU'\iff B\subseteq\QU'\iff\forall i\in
  I.\QU_i\subseteq\QU'$.
  
  The map $\QUnif_f$ is well-defined and monotone, because $f^\dagger$
  is lax-monoidal (see Proposition~\ref{prop:BR:dagger}).  Therefore,
  if $B'$ is a base for a quasi-uniformity on $X'$ (\ie, $B'$
  satisfies the properties required in
  Proposition~\ref{prop:qu:base}), then
  $B \defeq \setb{f^\dagger(R')}{R' \in B'}$ is a base for a quasi-uniformity
  on $X$.
  Finally, $\QUnif_f$ is meet-preserving, because:
  \begin{itemize}
  \item If $B'_i$ is a sub-base for $\QU'_i$, then
    $\bigcup_i B'_i$ is a sub-base for $\qM_i \QU'_i$.
  \item If $B_i'$ is a sub-base for $\QU'_i$, then
    $\setb{f^\dagger(R)}{R\in B'_i}$ is a sub-base for $\QUnif_f(\QU'_i)$,
    since $f^\dagger$ preserves intersections (and unions) of relations.
  \end{itemize}
  The third claim follows immediately from the definition of arrows in
  $\QUnif$ (see Definition~\ref{def:quasi_unif}).
\end{proof}

\begin{remark}
  \label{rmk:quX:CL}
  There are alternative descriptions of $\qM_{i\in I}\QU_i$ in special
  cases.  If the family $\QU_i$ is filtered in $\QUnif_X$ (i.e.,
  directed with respect to subset inclusion), then its meet is
  $\cup_{i\in I}\QU_i$. For a finite family indexed over (say)
  $I \in \omega$, if $B_i$ is a base for $\QU_i$ for each $i\in I$,
  then $B\defeq\setb{\cap_{i\in I}R_i}{\forall i\in I.R_i\in B_i}$ is
  a base for $\qM_{i\in I}\QU_i$.
\end{remark}

\begin{theorem}\label{thm:qu:TF}
  Let $U:\QUnif\to\Set$ be the functor such that $U(X,\QU)\defeq X$
  and on arrows is hom-sets inclusion, then $U$ is a strict and small
  topological functor.
\end{theorem}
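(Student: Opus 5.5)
The plan is to build initial structures directly from Proposition~\ref{prop:quX:CL}. First, $U$ is a functor because identities are quasi-uniformly continuous ($\id^\dagger(R)=R$). It is also closed under composition, since $(g\circ f)^\dagger=f^\dagger\circ g^\dagger$ as inverse images, so $g^\dagger(R'')\in\QU'$ and then $f^\dagger(g^\dagger(R''))\in\QU$.

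Smallness is immediate, because the fiber over $X$ is the set $\QUnif_X\subseteq\PS(\PS(X^2))$. For strictness, note that objects of $\QUnif$ with $U(A)=X$ are exactly quasi-uniformities on $X$, and the fiber order is $\sqsubseteq$ (i.e.\ $\supseteq$), which is antisymmetric. Once the initial structure is characterized as a specific element of this poset, it is unique. This agrees with Proposition~\ref{prop:TF}: a poset fiber forces uniqueness, since two initial structures over the same cone would be mutually below each other via identity-carried maps.

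For existence, take a set $B$ and a family $(X_i,\QU_i)$ with maps $f_i:B\to X_i$, indexed by a class $I$. Define $\QU\defeq\qM_{i\in I}\QUnif_{f_i}(\QU_i)$ in $\QUnif_B$, i.e.\ the filter generated by the sub-base $\bigcup_i\setb{f_i^\dagger(R)}{R\in\QU_i}$. Because $I$ may be a proper class, I will point out that this union is still a subset of the set $\PS(B^2)$, so the meet from Proposition~\ref{prop:quX:CL}(1) makes sense; its proof used no size assumption on $I$. By Proposition~\ref{prop:quX:CL}(3), each $f_i:(B,\QU)\to(X_i,\QU_i)$ is a morphism, since $\QU\sqsubseteq\QUnif_{f_i}(\QU_i)$.

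For the universal property, suppose $g'_i:(A',\QU')\to(X_i,\QU_i)$ with $g'_i=f_i\circ f$ for a map $f:A'\to B$. Uniqueness of $g$ is trivial because $U$ is faithful (hom-set inclusion), so it only remains to show that $f$ is quasi-uniformly continuous into $(B,\QU)$. A generic entourage of $\QU$ contains a finite intersection $\bigcap_{k\in K}f_{i_k}^\dagger(R_k)$ with $R_k\in\QU_{i_k}$. Since $f^\dagger$ is monotone and preserves intersections,
\begin{equation*}
f^\dagger\Bigl(\bigcap_{k\in K}f_{i_k}^\dagger(R_k)\Bigr)=\bigcap_{k\in K}(f_{i_k}\circ f)^\dagger(R_k)=\bigcap_{k\in K}{g'_{i_k}}^\dagger(R_k)\in\QU',
\end{equation*}
where membership in $\QU'$ follows from continuity of each $g'_{i_k}$ and closure of $\QU'$ under finite intersections. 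Upward closure of $\QU'$ then handles arbitrary entourages of $\QU$.

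The only point needing care is the class-sized index set, handled by the observation above; everything else is routine bookkeeping with $f^\dagger$.
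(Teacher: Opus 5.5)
Your proof is correct and follows essentially the same route as the paper: the initial structure is the meet in the fiber of the pulled-back quasi-uniformities $\QUnif_{f_i}(\cdot)$, the cone property comes from Proposition~\ref{prop:quX:CL}, uniqueness of the mediating arrow from faithfulness, and strictness and smallness from the fibers being small posets. The only difference is minor: where the paper uses meet-preservation of $\QUnif_f$ to conclude $\QUnif_f(\QU)\subseteq\QU'$, you inline that step as a direct computation with finite intersections of sub-base entourages, and you also spell out the class-size and antisymmetry points that the paper leaves implicit.
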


\begin{proof}
  First, we prove that $U$ is topological (see
  Definition~\ref{def:TF}).
  Given a family $\famb{(Y_i,\QV_i)}{i\in I}$ of quasi-uniform spaces
  and a family $\famb{f_i:X\to Y_i}{i\in I}$ of arrows in $\Set$,
  consider the following quasi-uniformities
  $\QU_i\defeq\QUnif_{f_i}(\QV_i)$ and $\QU\defeq\qM_{i\in I}\QU_i$ on
  $X$.  We show that $\famb{f_i:(X,\QU)\to (Y_i,\QV_i)}{i\in I}$ is an
  initial structure for the triple $(X,\famb{(Y_i,\QV_i)}{i\in
    I},\famb{f_i:X\to Y_i}{i\in I})$.
  By definition of meet in $\QUnif_X$, we have
  $\forall i\in I.\QUnif_{f_i}(\QV_i)\subseteq\QU$.  Therefore, by
  Proposition~\ref{prop:quX:CL},
  $\famb{f_i:(X,\QU)\to (Y_i,\QV_i)}{i\in I}$ is an $I$-cone from
  $(X,\QU)$ in $\QUnif$.

  Given any $I$-cone $\famb{g_i:(X',\QU')\to (Y_i,\QV_i)}{i\in I}$
  from $(X',\QU')$ in $\QUnif$ (\ie, $\forall i\in
  I.\QUnif_{g_i}(\QV_i)\subseteq\QU'$) such that $\forall i\in
  I.g_i=f_i\circ f$ for some $f:\Set(X',X)$, we prove that
  $f:\QUnif( (X',\QU'), (X,\QU))$ (\ie,
  $\QUnif_f(\QU)\subseteq\QU'$). Uniqueness follows from $U$ being
  faithful.
  By Proposition~\ref{prop:quX:CL}, $\QUnif_f$ preserves meets,
  therefore $$\QUnif_f(\QU)=\qM_{i\in I}\QUnif_f(\QUnif_{f_i}(\QV_i))=
  \qM_{i\in I}\QUnif_{f_i\circ f}(\QV_i)= \qM_{i\in
    I}\QUnif_{g_i}(\QV_i)\subseteq\QU'.$$
  Finally, as a topological functor $U$ is strict and small, because
  the fibers $\QUnif_X$ are posets.
\end{proof}

We define the functor $T:\QUnif\to\Top$ and the involution $-^o$ on
$\QUnif$, and then we prove the properties stated in
Diagram~\eqref{diag:forget2}.

\begin{proposition}
  \label{prop:qu}
  If $(X,\QU)$ is a quasi-uniform space and $B$ is a base for
  $\QU$:
  \begin{itemize}
  \item Let $\tau_\QU \defeq \setb{O\subseteq X}{\forall x\in
    O.\exists R\in B.R(x)\subseteq O}$, then $\tau_\QU$ is a topology
    on $X$ independent of the choice of the base $B$ for $\QU$.
    Furthermore, let $T(X,\QU) \defeq (X,\tau_\QU)$, then $T$ extends
    to a functor $T:\QUnif\to\Top$, which on arrows is hom-sets
    inclusion.
    
\item Let $\QU^o \defeq \setb{R^o}{R\in\QU}$ where $R^o$ is the dual
  of $R$. Then, $\QU^o$ is a quasi-uniformity on $X$ with base $B^o$.
  Let the \emph{dual quasi-uniform space} of $(X,\QU)$ be
  $(X,\QU)^o \defeq (X,\QU^o)$. Then, $-^o$ extends to an involution
  on $\QUnif$, which is the identity on hom-sets.
  \end{itemize}
\end{proposition}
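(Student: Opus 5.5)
The plan is to verify the two items separately, using only the filter, identity and composition properties of Definition~\ref{def:quasi_unif} together with the lax-monoidal facts of Section~\ref{sec:BR}.

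\emph{Item 1.} First I show that $\tau_\QU$ does not depend on $B$: I will prove it equals $\tau'\defeq\setb{O}{\forall x\in O.\exists R\in\QU.R(x)\subseteq O}$. Since $B\subseteq\QU$, every set in $\tau_\QU$ is in $\tau'$. Conversely, any $R\in\QU$ contains some $R'\in B$, and then $R'(x)\subseteq R(x)$. Next I check that $\tau'$ is a topology. It contains $\emptyset$ and $X$ (the latter via $X^2\in\QU$), and it is closed under arbitrary unions, because a single witness $R$ for $x\in O_i$ also works for $\bigcup_i O_i$. For binary intersections, if $R_1(x)\subseteq O_1$ and $R_2(x)\subseteq O_2$, then $R_1\cap R_2\in\QU$ witnesses $x\in O_1\cap O_2$. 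Notice that neither the identity nor the composition property is needed here.

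For functoriality, let $f:\QUnif((X,\QU),(X',\QU'))$ and $O'\in\tau_{\QU'}$; I must show $f^{-1}(O')\in\tau_\QU$. Take $x\in f^{-1}(O')$ and pick $R'\in\QU'$ with $R'(f(x))\subseteq O'$. Then $R\defeq f^\dagger(R')\in\QU$, and any $y\in R(x)$ satisfies $(f(x),f(y))\in R'$, so $f(y)\in O'$. Hence $R(x)\subseteq f^{-1}(O')$. Identities and composition are preserved automatically, since $T$ acts as hom-set inclusion.

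\emph{Item 2.} I use Proposition~\ref{prop:BR}(2): the map $-^o$ is an order-isomorphism that preserves intersections, fixes $\Delta$ and $X^2$, and satisfies $(R^2)^o=(R^o)^2$. From this, each axiom transfers directly. $\QU^o$ is an upperset because $S\supseteq R^o$ implies $S^o\supseteq R$, so $S^o\in\QU$. It is closed under finite intersections since $R_1^o\cap R_2^o=(R_1\cap R_2)^o$. The identity property holds since $\Delta=\Delta^o\subseteq R^o$. For composition, if $R_0^2\subseteq R$ then $(R_0^o)^2=(R_0^2)^o\subseteq R^o$. That $B^o$ is a base for $\QU^o$ follows the same way: $R'\subseteq R$ implies $R'^o\subseteq R^o$.

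For arrows, the key identity is $f^\dagger(R'^o)=f^\dagger(R')^o$, which is immediate from the definition of $f^\dagger$ as an inverse image under $f\times f$. So $f$ being quasi-uniformly continuous for $\QU,\QU'$ implies it is quasi-uniformly continuous for $\QU^o,\QU'^o$. Finally, $(\QU^o)^o=\QU$ because $R^{oo}=R$, so $-^o$ is an involution acting as the identity on hom-sets.

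There is no real obstacle here; the only step requiring some care is the base-independence of $\tau_\QU$, which I handle by comparing both descriptions with the version quantified over all of $\QU$.
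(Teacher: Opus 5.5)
Your proposal is correct and follows essentially the same route as the paper: the filter properties give closure under unions and finite intersections, $f^\dagger$ pulls back a witnessing entourage to show continuity, and the intersection-preserving strict-monoidal involution $-^o$ together with $f^\dagger(R^o)=f^\dagger(R)^o$ handles the dual space. You also spell out two points the paper leaves implicit: base-independence of $\tau_\QU$ (by comparing with the version quantified over all of $\QU$), and that $B^o$ is a base for $\QU^o$.
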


\begin{proof}
  First, we prove that $\tau_\QU$ is a topology on $X$, \ie, it is
  closed under arbitrary unions and finite intersections:
  \begin{itemize}
  \item if $A\subseteq\tau_\QU$ and $O'=\cup A$, then $x\in O'\implies
    x\in O$ for some $O\in A\subseteq \tau_\QU$, thus $\exists
    R\in\QU.R(x)\subseteq O\subseteq O'$.
  \item if $A\subseteq_f\tau_\QU$ and $O'=\cap A$, then
    $x\in O'\implies\forall O\in A.x\in O$. Thus,
    $\forall O\in A.\exists R_O\in\QU.R_O(x)\subseteq O$. Let
    $R=\cap\setb{R_O}{O\in A}$, we have $R\in\QU$ (because $\QU$ is a
    filter) and $R(x)=\cap\setb{R_O(x)}{O\in A}\subseteq O'$.
  \end{itemize}
  We prove that $f:\QUnif((X',\QU'),(X,\QU))$ implies
  $f:\Top((X',\tau_{\QU'}),(X,\tau_{\QU}))$, \ie,
  $O\in\tau_{\QU}$ implies $O'\defeq f^{-1}(O)\in\tau_{\QU'}$:
  \begin{itemize}
  \item $x\in O'\implies f(x)\in O\implies
    R(f(x))\subseteq O$ for some $R\in\QU$, therefore
  \item $R'\defeq f^\dagger(R)\in\QU'$ (because $f$ is quasi-uniformly
    continuous) and
  \item $R'(x)=\setb{x'}{f(x')\in R(f(x))}\subseteq \setb{x'}{f(x')\in
    O}=O'$.
  \end{itemize}

  Next, we prove that $\QU^o$ is a quasi-uniformity. By
  Proposition~\ref{prop:BR}, we have that $-^o:\PS(X^2)\to\PS(X^2)$ is
  a strict-monoidal isomorphism, which preserves intersections. Thus,
  it maps the filter $\QU$ to a filter $\QU^o$, while the other
  properties of a quasi-uniformity are preserved because $-^o$ is
  strict-monoidal. More precisely, we have:
  $\Delta\subseteq R\implies \Delta=\Delta^o\subseteq R^o$ and
  $R_0\qT R_0\subseteq R\implies R_0^o\qT R_0^o=R_0^o\qT^o
  R_0^o\subseteq R^o$.

  Finally, $f:\QUnif((X_1,\QU_1),(X_2,\QU_2))$ implies
  $f:\QUnif((X_1,\QU_1^o),(X_2,\QU_2^o))$, since
  $f^\dagger(R^o)=f^\dagger(R)^o$.
\end{proof}

\begin{remark}
  \label{rmk:admit}
  The objects $(X,\QU)$ of $\QUnif$ such that $(X,\QU) =(X,\QU)^o$ are
  called \emph{uniform spaces}. Such spaces are too restrictive for
  capturing all topological spaces. In fact, a topology $\tau$ on $X$
  \emph{admits} a uniformity $\QU$ on $X$ (\ie, $(X,\tau)=T(X,\QU)$)
  if and only if $(X,\tau)$ is completely regular
 ~\parencite[Theorem~38.2]{Willard:General_Topology:Book:1970}.
  On the other hand, every topology $\tau$ on $X$ admits a
  quasi-uniformity~\parencite[pp.~27]{Fletcher_Lindgren:Quasi_Uniform:Book:1982},
  \eg, the Cs{\'a}sz{\'a}r-Pervin quasi-uniformity $\QU^C_\tau$ on $X$
  generated by the sub-base $B^C_\tau=\setb{R_O}{O\in\tau}$, where
  $R_O\defeq\setb{(x,y)\in X^2}{x\in O\implies y\in O}$.
  The functor $C:\Top\rTo\QUnif$, which maps $(X,\tau)$ to
  $(X,\QU^C_\tau)$ and on arrows is hom-sets inclusion, makes $\Top$ a
  full sub-category of $\QUnif$, because $R_{f^{-1}(O)}=f^\dagger(R_O)$
  for every map $f:X'\to X$ and $O\in\tau$.
  The Cs{\'a}sz{\'a}r-Pervin quasi-uniformity is \emph{transitive},
  \ie, it has a sub-base consisting of transitive relations
  $R^2\subseteq R$.  Indeed, all relations in $B^C_\tau$ are
  transitive.
\end{remark}

A quasi-uniformity $\QU$ on $X$ induces two topologies on $X$
($\tau_\QU$ and $\tau_{\QU^o}$) and, accordingly, four product
topologies on $X^2$. If $\QU$ is a uniformity (\ie, $\QU=\QU^o$), then
it induces only one topology on $X$ and one product topology on
$X^2$. Since the elements in a quasi-uniformity $\QU$ are subsets of
$X^2$, one may wonder whether $\QU$ has a base consisting only of open
subsets for one of the four product topologies on $X^2$.

\begin{definition}[Open Entourage and Base]\label{def:open-base}
Given $(X,\QU)\in\QUnif$, we say that an entourage
$R\in\QU$ is \emph{open} $\defiff$ it is open in $T(X,\QU^o)\times
T(X,\QU) = (X^2, \tau_{\QU^o}\times\tau_\QU)$, where
$\tau_1\times\tau_2$ is the product topology on $X_1\times X_2$
induced by the topologies $\tau_i$ on $X_i$.  A
(sub-)base $B$ for $\QU$ is \emph{open} if it contains only open
entourages of $\QU$.
\end{definition}

The following lemma gives an alternative characterization of the
interior operator for the topologies $\tau_\QU$ and
$\tau_{\QU^o}\times\tau_\QU$, and show that a (sub-)base for a
quasi-uniformity can always be \emph{refined} into an open one.

\begin{lemma}\label{lem:interior}
  Given a quasi-uniform space $(X,\QU)$ and a base $B$ for $\QU$, the
  following hold:
  \begin{enumerate}
  \item \label{item:interior_tau_U} If $A\subseteq X$, then $\Int[\tau_\QU]{A} = A'\defeq\setb{x\in
    X}{\exists R\in B.\ R(x)\subseteq A}$.
  \item \label{item:interior_tau_U_dual_U} If $S\subseteq X^2$, then $\Int[\tau_{\QU^o}\times\tau_\QU]{S} =
    S'\defeq\setb{(x_1,x_2)\in X^2}{\exists R\in B.\ R^o(x_1)\times
      R(x_2)\subseteq S}$.
  \end{enumerate}
\end{lemma}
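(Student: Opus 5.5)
For both items I would show two inclusions: $A'$ (resp.\ $S'$) is open and contained in the given set, and every open subset of the given set lies inside $A'$ (resp.\ $S'$). The one non-trivial ingredient is the composition property of $\QU$ from Definition~\ref{def:quasi_unif}, transferred to the base $B$ via Proposition~\ref{prop:qu:base}: for every $R\in B$ there is $R_0\in B$ with $R_0^2\subseteq R$.

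For item~\ref{item:interior_tau_U}, note first that $A'\subseteq A$, because $x\in R(x)$ by the identity property. Next, if $O\in\tau_\QU$ and $O\subseteq A$, then every $x\in O$ has some $R\in B$ with $R(x)\subseteq O\subseteq A$; hence $O\subseteq A'$. It remains to show that $A'\in\tau_\QU$. Take $x\in A'$ with $R(x)\subseteq A$, and choose $R_0\in B$ with $R_0^2\subseteq R$. I claim $R_0(x)\subseteq A'$. Indeed, for $y\in R_0(x)$ we have $R_0(y)\subseteq R_0(R_0(x))=R_0^2(x)\subseteq R(x)\subseteq A$, so $y\in A'$. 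Therefore $A'$ is the largest open set contained in $A$.

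For item~\ref{item:interior_tau_U_dual_U}, I would proceed in the same way. First I need a description of a base of the product topology. By item~\ref{item:interior_tau_U} applied to $\QU$ and to $\QU^o$ (which has base $B^o$ by Proposition~\ref{prop:qu}), every neighbourhood of $x_2$ in $\tau_\QU$ contains some $R(x_2)$. Likewise every neighbourhood of $x_1$ in $\tau_{\QU^o}$ contains some $R'^o(x_1)$. Since $B$ is filtered, the two relations can be replaced by a single $R''\in B$ contained in both.

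With this in hand, the three steps are as follows. First, $S'\subseteq S$ by reflexivity. Second, every open $W\subseteq S$ lies in $S'$: each $(x_1,x_2)\in W$ has a product neighbourhood $U_1\times U_2\subseteq W$, which contains $R^o(x_1)\times R(x_2)$ for a common $R\in B$. The catch is that $R^o(x_1)$ and $R(x_2)$ need not themselves be open. This is harmless, since we only need containment, and $R^o(x_1)\times R(x_2)\subseteq U_1\times U_2\subseteq S$. Third, $S'$ is open. Given $(x_1,x_2)\in S'$ via $R$, pick $R_0\in B$ with $R_0^2\subseteq R$. The sets $\Int[\tau_{\QU^o}]{R_0^o(x_1)}$ and $\Int[\tau_\QU]{R_0(x_2)}$ are neighbourhoods of $x_1$ and $x_2$, by the construction in item~\ref{item:interior_tau_U} (each point lies in the interior of its own $R_0$-ball). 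For $(y_1,y_2)$ in their product we get
\[R_0^o(y_1)\subseteq (R_0^o)^2(x_1)=(R_0^2)^o(x_1)\subseteq R^o(x_1),\]
and similarly $R_0(y_2)\subseteq R(x_2)$. Hence $R_0^o(y_1)\times R_0(y_2)\subseteq S$, so $(y_1,y_2)\in S'$.

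I expect the main obstacle to be this third step of item~\ref{item:interior_tau_U_dual_U}. The balls are not open, so one has to pass through their interiors, and the composition of the dual relation has to be tracked carefully using $(R_0^o)^2=(R_0^2)^o$, which follows from Proposition~\ref{prop:BR}.
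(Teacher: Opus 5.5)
Your proof is correct. Item~1 follows the paper's argument exactly, and you state explicitly the inclusion $A'\subseteq A$, which the paper uses without saying so.

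Your proof of $\Int[\tau_{\QU^o}\times\tau_\QU]{S}\subseteq S'$ also matches the paper's. It is slightly more careful: you pick the common entourage inside $B$ via Proposition~\ref{prop:qu:base}. The paper takes $R_1\cap R_2\in\QU$, even though $S'$ quantifies over $B$.

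The real difference is the reverse inclusion. You prove that $S'$ itself is open. This costs a second use of the composition property ($R_0^2\subseteq R$) and the bookkeeping $(R_0^o)^2=(R_0^2)^o$. The paper avoids this. Given $(x_1,x_2)\in S'$ via $R$, item~1 already gives $x_1\in\Int[\tau_{\QU^o}]{R^o(x_1)}$ and $x_2\in\Int[\tau_\QU]{R(x_2)}$. The open box formed by these two interiors contains $(x_1,x_2)$ and lies inside $R^o(x_1)\times R(x_2)\subseteq S$. Hence $(x_1,x_2)\in\Int[\tau_{\QU^o}\times\tau_\QU]{S}$ directly, and together with the inclusion you already proved this gives equality.

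So the composition property is needed only once, inside item~1, and the step you flagged as the main obstacle disappears. Your route is sound, but it repeats the work of item~1. The paper's route is shorter because it uses item~1 as a lemma for item~2 instead of re-deriving openness from scratch.
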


\begin{proof}
  Item~\ref{item:interior_tau_U}: If $O\in\tau_\QU$ and
  $O\subseteq A$, then $O\subseteq A'$.  Hence, it suffices to prove
  that $A'\in\tau_\QU$.  Let $x\in A'$, then $R(x)\subseteq A$ for
  some $R\in\QU$.  By the composition property, there is $R_0\in\QU$
  such that $R_0^2\subseteq R$.  We show that $R_0(x)\subseteq A'$.
  For all $y\in R_0(x)$ (\ie, $(x,y)\in R_0$) we have
  $R_0(y) \subseteq R_0^2(x) \subseteq R(x) \subseteq A$, hence
  $y \in A'$.
  
  Item~\ref{item:interior_tau_U_dual_U}: By definition of product
  topology, $(x_1,x_2)\in\Int[\tau_{\QU^o}\times\tau_\QU]{S}$ implies
  $(x_1,x_2)\in O_1\times O_2 \subseteq S$ for some
  $O_1\in\tau_{\QU^o}$ and $O_2\in\tau_\QU$.
  Thus, there are $R_1, R_2\in \QU$ such that $R_1^o(x_1)\subseteq
  O_1$ and $R_2(x_2)\subseteq O_2$, and therefore $R_1^o(x_1)\times
  R_2(x_2)\subseteq S$.
  Since $\QU$ is a filter, $R\defeq R_1\cap R_2\in\QU$ and
  $R^o(x_1)\times R(x_2) \subseteq R_1^o(x_1)\times R_2(x_2) \subseteq
  S$, proving that $\Int{S}\subseteq S'$.
  On the other hand, if $(x_1,x_2)\in S'$, by definition of $S'$ there
  is $R\in \QU$ such that $R^o(x_1)\times R(x_2)\subseteq S$.
  Let $O_1\defeq\Int[\tau_{\QU^o}]{R^o(x_1)}$ and $O_2\defeq
  \Int[\tau_\QU]{R(x_2)}$. Then, by Item~\ref{item:interior_tau_U},
  $x_1\in O_1$ and $x_2 \in O_2$.  Hence, $(x_1,x_2) \in O_1\times
  O_2\subseteq R^o(x_1)\times R(x_2) \subseteq S$, proving that
  $(x_1,x_2) \in \Int{S}$, therefore $S'\subseteq \Int{S}$
\end{proof}

\begin{proposition}\label{prop:open-entourage}
  If $B$ is an open (sub-)base for a quasi-uniformity $\QU$ on $X$,
  then $B_\tau\defeq\setb{R(x)}{R\in B \land x\in X}$ is a (sub-)base
  for the topology $\tau_\QU$.
\end{proposition}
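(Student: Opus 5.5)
Two things must be shown: each $R(x)$ with $R\in B$ is open in $\tau_\QU$, and every open set is a union of (finite intersections of) such sets. I would treat the base and sub-base cases together. A sub-base $B$ for $\QU$ has its closure $B_\cap$ under finite intersections as a base, and $(\bigcap_i R_i)(x)=\bigcap_i R_i(x)$. So once the base case is settled for open bases, the sub-base case reduces to it. For this I need that finite intersections of open entourages are open entourages, which holds because the product topology is closed under finite intersections and $\QU$ is a filter. Also $B_\cap$ contains $X^2$, and $X^2(x)=X$, which corresponds to the empty intersection of sub-basic sets, as it should.

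Openness of $R(x)$: suppose $R\in B$ is open in $\tau_{\QU^o}\times\tau_\QU$ and $y\in R(x)$, so $(x,y)\in R$. By Lemma~\ref{lem:interior}(\ref{item:interior_tau_U_dual_U}), there is $R'\in B$ with $R'^o(x)\times R'(y)\subseteq R$. Since $\Delta\subseteq R'$ we have $x\in R'^o(x)$, and therefore $\{x\}\times R'(y)\subseteq R$, that is, $R'(y)\subseteq R(x)$. By the definition of $\tau_\QU$ in Proposition~\ref{prop:qu}, this gives $R(x)\in\tau_\QU$. An equivalent route is to note that the section $y\mapsto(x,y)$ is continuous from $(X,\tau_\QU)$ into the product, so $R(x)$ is the preimage of the open set $R$.

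Covering: let $O\in\tau_\QU$ and $x\in O$. By Proposition~\ref{prop:qu}, computed with the base $B$, there is $R\in B$ with $R(x)\subseteq O$, and $x\in R(x)$ by reflexivity. Hence $O=\bigcup_{x\in O}R_x(x)$ is a union of members of $B_\tau$, and $B_\tau$ is a base.

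The main obstacle is the openness step. It is the only place where openness of the entourages is used, and it needs the interior characterization from Lemma~\ref{lem:interior} together with reflexivity to pass from the product topology to the fiber $R(x)$. Everything else is bookkeeping with the definitions.
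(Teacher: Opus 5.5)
Your proposal is correct and follows essentially the same two-step argument as the paper: first show that each $R(x)$ with $R$ an open entourage is $\tau_\QU$-open, then cover $O$ by sets $R(x)\subseteq O$, with $R$ refined to a finite intersection of members of $B$. The only differences are cosmetic. The paper gets openness directly from a basic rectangle $O_1\times O_2\subseteq R$ containing $(x,y)$, so that $y\in O_2\subseteq R(x)$; this is your section-map variant, rather than the route through Lemma~\ref{lem:interior}. The paper also treats the base and sub-base cases in a single pass instead of reducing to the finite-intersection closure.
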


\begin{proof}
  First, we prove that, if $R\in\QU$ is open and $x\in X$, then $R(x)$
  is open in $\tau_\QU$. Since $y\in R(x) \iff (x,y)\in R$ and $R$ is
  open, then $(x,y)\in O_1\times O_2\subseteq R$ for some
  $O_1\in\tau_{\QU^o}$ and $O_2\in\tau_{\QU}$. Thus,
  $y\in O_2=(O_1\times O_2)(x)\subseteq R(x)$. Therefore, $B$ being
  open implies $B_\tau\subseteq\tau_\QU$.
  Secondly, we prove that $B_\tau$ is a (sub-)base for $\tau_\QU$,
  \ie, $x\in O\in\tau_\QU$ implies $x\in\cap_{i\in n}O_i\subseteq O$,
   for some $n$-indexed family of elements in $B_\tau$.
  If $x\in O\in\tau_\QU$, then (by item~\ref{item:interior_tau_U} of
  Lemma~\ref{lem:interior}) $x\in R(x)\subseteq O$ for some $R\in\QU$.
  Since $B$ is a (sub-)base for $\QU$, $\cap_{i\in n}R_i\subseteq R$,
  for some $n$-indexed family of entourages in $B$.  Therefore,
  $x\in\cap_{i\in n}O_i=(\cap_{i\in n}R_i)(x)\subseteq R(x)\subseteq
  O$, where $O_i\defeq R_i(x)\in B_\tau$.
\end{proof}

The following result is a generalization of an analogous one for
uniform spaces~\parencite[Theorem 6, pp. 179]{Kelley1975}.  It shows that
every quasi-uniformity $\QU$ is closed under the interior operator for
the topology $\tau_{\QU^o}\times\tau_\QU$.
\begin{proposition}
  If $\QU$ is a quasi-uniformity and $R\in\QU$, then
$\Int[\tau_{\QU^o}\times\tau_\QU]{R}\in\QU$.
\end{proposition}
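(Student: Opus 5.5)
Since $\QU$ is an upperset, it suffices to find some $R_0\in\QU$ with $R_0\subseteq\Int[\tau_{\QU^o}\times\tau_\QU]{R}$. The natural candidate comes from applying the composition property twice. Given $R\in\QU$, pick $R_1\in\QU$ with $R_1^2\subseteq R$, then $R_0\in\QU$ with $R_0^2\subseteq R_1$. Then $R_0^3\subseteq R_0\qT R_1\subseteq R_1^2\subseteq R$, using $R_0\subseteq R_0^2$, which follows from $\Delta\subseteq R_0$. So $R_0$ is an entourage with $R_0^3\subseteq R$.

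Next I use the explicit description of the interior in item~\ref{item:interior_tau_U_dual_U} of Lemma~\ref{lem:interior}, taking $B\defeq\QU$ itself as the base. A pair $(x_1,x_2)$ lies in $\Int[\tau_{\QU^o}\times\tau_\QU]{R}$ as soon as there is $R'\in\QU$ with $R'^o(x_1)\times R'(x_2)\subseteq R$. So I take $(x_1,x_2)\in R_0$ and choose $R'\defeq R_0$.

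The key check is $R_0^o(x_1)\times R_0(x_2)\subseteq R$. Let $y_1\in R_0^o(x_1)$, i.e.\ $(y_1,x_1)\in R_0$, and $y_2\in R_0(x_2)$, i.e.\ $(x_2,y_2)\in R_0$. Together with $(x_1,x_2)\in R_0$ this gives $(y_1,y_2)\in R_0\qT R_0\qT R_0=R_0^3\subseteq R$. Hence $R_0\subseteq\Int{R}$, and so $\Int{R}\in\QU$.

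The only delicate point is the direction conventions. One must check that $R^o(x_1)$ is $\setb{y}{(y,x_1)\in R}$ and that composition is in diagrammatic order, as in Proposition~\ref{prop:BR}, so that the chain $y_1\to x_1\to x_2\to y_2$ really composes to $R_0^3$. Once this is confirmed, the argument is routine.
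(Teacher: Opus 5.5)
Your proof is correct and is essentially the paper's argument: choose $R_0\in\QU$ with $R_0\subseteq R_0^3\subseteq R$, then apply item~\ref{item:interior_tau_U_dual_U} of Lemma~\ref{lem:interior}, using that $R_0^o(x_1)\times R_0(x_2)\subseteq R_0^3$ whenever $(x_1,x_2)\in R_0$ (the paper packages this as $R_0^3=\bigcup_{(x,y)\in R_0}R_0^o(x)\times R_0(y)$). Your pointwise formulation concludes exactly $R_0\subseteq\Int[\tau_{\QU^o}\times\tau_\QU]{R}$, which is all that is needed. The paper's displayed chain also asserts $R_0^3\subseteq\Int{R}$, which is stronger than needed and can fail: take the usual uniformity on $\mathbb{R}$ with $R_0=\setb{(x,y)}{|x-y|\le 1}$ and $R=R_0^3$.
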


\begin{proof}
  We must find $R_0\in\QU$ such that $R_0\subseteq\Int{R}$.
  By composition and identity properties, there is $R_0\in\QU$
  such that $R_0 \subseteq R_0^3\subseteq R$. We show that $R_0^3 =
  \bigcup_{(x,y)\in R_0} R_0^o(x)\times R_0(y)$, by the following
  chain of equivalences: $(x',y') \in R_0^3 \iff \exists (x,y)\in
  R_0. (x',x)\in R_0 \land (y,y')\in R_0 \iff \exists (x,y)\in
  R_0. x'\in R_0^o(x) \land y'\in R_0(y) \iff (x',y')\in
  \bigcup_{(x,y)\in R_0} R_0^o(x)\times R_0(y)$.  Therefore, by
  Item~\ref{item:interior_tau_U_dual_U} of Lemma~\ref{lem:interior},
  we get $R_0 \subseteq R_0^3 \subseteq \Int{R}$.
\end{proof}

The previous result implies that any (sub-)base for a quasi-uniformity
can be refined into an open one without increasing its cardinality.
\begin{corollary}
  \label{cor:open-base}
  If $B$ is a (sub-)base for the quasi-uniformity $\QU$, then so is
  $B' \defeq \setb{\Int[\tau_{\QU^o}\times\tau_\QU]{R}}{R\in B}$.
\end{corollary}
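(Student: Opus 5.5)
The plan is to show that $B'$ generates exactly the same filter as $B$. I will treat the base case and the sub-base case in parallel. Throughout, $\Int{-}$ denotes the interior in $\tau_{\QU^o}\times\tau_\QU$. The proof has two halves: first $B'\subseteq\QU$, then every element of $\QU$ contains a finite intersection of elements of $B'$, or a single element in the base case.

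For the first half, I invoke the preceding proposition. Since $B\subseteq\QU$, it gives $\Int{R}\in\QU$ for every $R\in B$, so $B'\subseteq\QU$. In particular, the filter generated by $B'$ is contained in $\QU$. In the sub-base case this containment follows simply because $\QU$ is a filter containing $B'$.

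For the second half, take $S\in\QU$. In the base case there is $R\in B$ with $R\subseteq S$. In the sub-base case there are $R_1,\dots,R_n\in B$ with $\bigcap_i R_i\subseteq S$. We cannot use $R$ itself, since it may not lie in $B'$, so the task is to find an element of $B'$ below $S$. The naive bound $\Int{R}\subseteq R\subseteq S$ is available, because interior is deflationary. Hence $\Int{R}\in B'$ with $\Int{R}\subseteq S$. In the sub-base case, $\bigcap_i\Int{R_i}\subseteq\bigcap_i R_i\subseteq S$. So $B'$ is cofinal, and together with $B'\subseteq\QU$ this shows $\QU$ is the upperset (respectively, the filter) generated by $B'$.

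There is no real obstacle. The only substantive input is the preceding proposition, namely that $\Int{R}$ stays in $\QU$, which rests on $R_0\subseteq R_0^3\subseteq\Int{R}$ via Lemma~\ref{lem:interior}. The one point to state carefully is the definition of base in Definition~\ref{def:QUnif_Base_Subbase}: it requires $B'\subseteq\QU$ as well as cofinality, and both halves above supply exactly these two conditions. As a side remark, $B'$ is open by construction, since each $\Int{R}$ is open, and its cardinality is at most that of $B$.
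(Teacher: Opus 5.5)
Your proof is correct and is exactly the argument the paper leaves implicit (``the previous result implies\ldots''). The preceding proposition gives $\Int{R}\in\QU$, so $B'\subseteq\QU$, and $\Int{R}\subseteq R$ shows that $B'$ refines $B$, so both generate the same upperset (respectively, the same filter).

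One small aside on your recap of that proposition: in general only $R_0\subseteq\Int{R}$ holds, because $R_0^o(x)\times R_0(y)\subseteq R_0^3\subseteq R$ for $(x,y)\in R_0$. The middle inclusion $R_0^3\subseteq\Int{R}$ can fail, e.g.\ on $\mathbb{R}$ with $R_0=\{(x,y)\mid |x-y|\le 1\}$ and $R=R_0^3$, but this does not affect your argument.
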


Every $T$-section $F:\Top\to\QUnif$ gives a different way to view $\Top$
as a full sub-category of $\QUnif$.  Among such $T$-sections there is a
\emph{best} choice, namely the $T$-section which is also left-adjoint to
$T$, making $\Top$ a coreflective sub-category of $\QUnif$.

\begin{theorem}\label{thm:las:T}
  The functor $T:\QUnif\to\Top$ has a left-adjoint $T$-section
  $L_T:\Top\to\QUnif$ such that $L_T(X,\tau)\defeq(X,\QU_\tau)$, where
  $\QU_\tau\defeq\qM\setb{\QV\in\QUnif_X}{\tau_\QV\subseteq \tau}$,
  and on arrows is hom-sets inclusion.
\end{theorem}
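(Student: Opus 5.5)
The plan is to prove two things: that $L_T$ is a $T$-section, i.e.\ $\tau_{\QU_\tau}=\tau$, and that for every $(Y,\QV)\in\QUnif$ and every map $f:X\to Y$,
\[
f:\Top((X,\tau),T(Y,\QV)) \iff f:\QUnif((X,\QU_\tau),(Y,\QV)).
\]
This bijection of hom-sets (both are subsets of $\Set(X,Y)$) is natural because every functor involved is hom-set inclusion on arrows. It yields the adjunction $L_T\dashv T$ with identity unit. Functoriality of $L_T$ then follows from the same equivalence applied to $(Y,\QV)=L_T(Y,\sigma)$.

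\textbf{Step 1: $\tau\subseteq\tau_{\QU_\tau}$.} I use the Cs\'asz\'ar-Pervin quasi-uniformity $\QU^C_\tau$ of Remark~\ref{rmk:admit}.
\begin{itemize}
\item For $O\in\tau$ and $x\in O$ we have $R_O(x)=O$, so $O\in\tau_{\QU^C_\tau}$.
\item Conversely, any entourage of $\QU^C_\tau$ contains a finite intersection $\bigcap_i R_{O_i}$. At $x$ this gives $\bigcap_i R_{O_i}(x)=\bigcap_{i:\,x\in O_i}O_i$, which is a $\tau$-open neighbourhood of $x$, so $\tau_{\QU^C_\tau}\subseteq\tau$.
\end{itemize}
Hence $\tau_{\QU^C_\tau}=\tau$, so $\QU^C_\tau$ belongs to the family whose meet is $\QU_\tau$. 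By Proposition~\ref{prop:quX:CL} this meet is the filter generated by the union of the family, so $\QU^C_\tau\subseteq\QU_\tau$. A larger set of entourages gives a finer topology, directly from the definition of $\tau_\QU$. Therefore $\tau\subseteq\tau_{\QU_\tau}$.

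\textbf{Step 2: $\tau_{\QU_\tau}\subseteq\tau$ (the main obstacle).} The difficulty is that $\QU_\tau$ is generated by a possibly huge union, so its entourages are not entourages of any single member $\QV$. Let $O\in\tau_{\QU_\tau}$ and $x\in O$. Then $R(x)\subseteq O$ for some $R\in\QU_\tau$. Since the union is a sub-base, $R\supseteq\bigcap_{i\in n}R_i$ with $R_i\in\QV_i$ and $\tau_{\QV_i}\subseteq\tau$.
\begin{itemize}
\item By Lemma~\ref{lem:interior}, $O_i\defeq\Int[\tau_{\QV_i}]{R_i(x)}$ contains $x$, and $O_i\in\tau_{\QV_i}\subseteq\tau$.
\item So $\bigcap_i O_i$ is a $\tau$-open neighbourhood of $x$ contained in $\bigcap_i R_i(x)\subseteq R(x)\subseteq O$.
\end{itemize}
Thus $O$ is $\tau$-open. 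The key point is that the interior in each $\tau_{\QV_i}$ is already $\tau$-open, and only finitely many generators are needed.

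\textbf{Step 3: the adjunction.} The direction ($\Leftarrow$) follows by applying the functor $T$ and using Steps 1--2, which give $T(X,\QU_\tau)=(X,\tau)$.

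For ($\Rightarrow$), let $f$ be continuous from $(X,\tau)$ to $(Y,\tau_\QV)$. I show that the quasi-uniformity $\QUnif_f(\QV)$ on $X$ has topology contained in $\tau$.
\begin{itemize}
\item If $O$ is open in it and $x\in O$, there is $R\in\QV$ with $f^\dagger(R)(x)=f^{-1}(R(f(x)))\subseteq O$.
\item Then $f^{-1}\bigl(\Int[\tau_\QV]{R(f(x))}\bigr)$ is $\tau$-open, contains $x$ by Lemma~\ref{lem:interior}, and lies inside $O$.
\end{itemize}
Hence $\QUnif_f(\QV)$ is in the family defining $\QU_\tau$, so $\QUnif_f(\QV)\subseteq\QU_\tau$, i.e.\ $\QU_\tau\sqsubseteq\QUnif_f(\QV)$. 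By item 3 of Proposition~\ref{prop:quX:CL}, $f$ is quasi-uniformly continuous.
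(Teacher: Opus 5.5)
Your proof is correct, and its overall decomposition matches the paper's: the section property is proved by two inclusions, and the adjunction by showing that $\QUnif_f(\QV)$ belongs to the family whose meet is $\QU_\tau$. The difference is in the key step of Step~2 and of the ($\Rightarrow$) direction.

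\begin{itemize}
\item \textbf{The paper's route.} It first fixes an \emph{open} base for every $\QV$ with $\tau_\QV\subseteq\tau$ (Corollary~\ref{cor:open-base}). Then, by Proposition~\ref{prop:open-entourage}, each section $R_i(x)$ is itself $\tau_{\QV_i}$-open, and likewise $R(f(x))$ is $\tau_\QV$-open.
\item \textbf{Your route.} You leave the entourages arbitrary and pass to $\Int[\tau_{\QV_i}]{R_i(x)}$ (resp.\ $\Int[\tau_\QV]{R(f(x))}$). This contains the relevant point by item~\ref{item:interior_tau_U} of Lemma~\ref{lem:interior}.
\end{itemize}

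Your argument is more elementary. It uses only the composition property, through that item of the lemma. It avoids the product topology $\tau_{\QU^o}\times\tau_\QU$, the result that interiors of entourages are again entourages, and the choice of an open base for each member of the family.

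The paper's route instead reuses the open-entourage machinery it develops anyway and uses again in Proposition~\ref{prop:qu:sp}. It also shows directly that the generated base of $\QU_\tau$ consists of entourages whose sections $R(x)$ are $\tau$-open.

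You also make explicit two points the paper leaves implicit: that the Cs\'asz\'ar--Pervin quasi-uniformity actually induces $\tau$, and that $L_T$ is functorial on arrows.
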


\begin{proof}
  First, we prove that $L_T$ is a $T$-section, \ie,
  $T(X,\QU_\tau)=(X,\tau)$ or equivalently $\tau=\tau_{\QU_\tau}$.
  The inclusion $\tau\subseteq\tau_{\QU_\tau}$ holds, because there
  exists $\QV \in \QUnif_X$ such that $\tau_\QV = \tau$, \eg, the
  Cs{\'a}sz{\'a}r-Pervin quasi-uniformity (see
  Remark~\ref{rmk:admit}), and $\tau_\QV\subseteq\tau_\QU$ when
  $\QV\subseteq\QU$ in $\QUnif_X$.
  To prove the inclusion $\tau\supseteq\tau_{\QU_\tau}$, choose an
  open base $B_\QV$ for each quasi-uniformity $\QV\in\QUnif_X$ such
  that $\tau_\QV\subseteq\tau$.
  By the definition of join in $\QUnif_X$ (see
  Proposition~\ref{prop:quX:CL}), $B\defeq\bigcup
  \setb{B_\QV}{\QV\in\QUnif_X\land\tau_\QV\subseteq\tau}$ is a
  sub-base for the quasi-uniformity $\QU_\tau$.  Let $B'$ be the base
  for $\QU_\tau$ generated by $B$, \ie, the closure of $B$ under
  finite intersections.
  For every $O \in \tau_{\QU_\tau}$, by Lemma~\ref{lem:interior}, we
  have $\forall x \in O.\exists R \in B'. R(x)\subseteq O$, but
  $R=\cap_{i\in n}R_i$ for some $\setb{R_i}{i\in n}\subseteq_f B$, \ie,
  $R_i\in B_{\QV_i}$ for some $\QV_i\in\QUnif_X$ such that
  $\tau_{\QV_i}\subseteq \tau$.
  Since $B_{\QV_i}$ is an open base for $\QV_i$ for each $i\in n$, by
  Proposition~\ref{prop:open-entourage}, we get $\forall i\in
  n.R_i(x)\in\tau_{\QV_i}\subseteq\tau$ for each $x\in X$.  Therefore,
  $R(x)=\cap_{i\in n}R_i(x)\in\tau$, since $\tau$ is closed under
  finite intersections.
  Finally, $\forall x \in O.\exists R \in B'. R(x)\subseteq O$
  implies $O\in\tau$, as $x\in R(x)\in\tau$ for every $R\in B'$ and $x
  \in X$.
  
  Secondly, we prove that $L_T$ is left-adjoint to $T$, \ie,
  $f:\Top((X,\tau),(Y,\tau_\QV)) \iff f:\QUnif((X,\QU_\tau),(Y,\QV))$
  when $(X,\tau):\Top$ and $(Y,\QV): \QUnif$.  The ($\impliedby$)
  direction follows from $T(X,\QU_\tau)=(X,\tau)$ and $T$ being
  hom-sets inclusion on arrows. For the ($\implies$) direction, by
  Proposition~\ref{prop:quX:CL}, we have to prove that $\QV_f\defeq
  \QUnif_f(\QV)\subseteq\QU_\tau$, which (by definition of $\QU_\tau$)
  amounts to proving that $\tau_{\QV_f}\subseteq\tau$.
  Consider an open base $B_\QV$ of $\QV$.  Then,
  $B_f=\setb{f^\dagger(R)}{R\in B_\QV}$ is a base for $\QV_f$.
  By Lemma~\ref{lem:interior}, $O\in\tau_{\QV_f}$ implies $\forall x
  \in O. \exists R'\in B_f. R'(x) \subseteq O$, which is equivalent to
  $\forall x \in O. \exists R\in B_\QV. f^\dagger(R)(x)\subseteq O$.
  Since $f^\dagger(R)(x) = \setb{x'\in X}{(f(x),f(x'))\in R} =
  f^{-1}(R(f(x))$ and $R(f(x))\in\tau_\QV$ (by
  Proposition~\ref{prop:open-entourage}), continuity of $f$ implies
  that $f^\dagger(R)(x)=f^{-1}(R(f(x))\in\tau$.
  Hence, we conclude that $O\in\tau$, proving that $\tau_{\QV_f}\subseteq\tau$.
\end{proof}

By composing the left-adjoints $L_P:\Po\to\Top$ and
$L_T:\Top\to\QUnif$ we get a $(P\circ T)$-section $L_T\circ L_P$
left-adjoint to the forgetful functor $P\circ T:\QUnif\to\Po$.
We conclude this section by giving direct descriptions of the
specialization preorder $P(T(X,\QU))$ and the
\emph{Alexandroff quasi-uniform space} $L_T(L_P(X,\leq))$.

\begin{proposition}\label{prop:qu:sp}
  If $(X,\QU)\in\QUnif$, then $P(T(X,\QU))=(X,\bigcap\QU)$,
  \ie, $\leq_\QU\defeq\leq_{\tau_\QU}=\bigcap\QU$.
\end{proposition}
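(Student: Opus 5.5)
We show directly that for all $x,y\in X$,
\[
x\leq_{\tau_\QU} y \iff \forall R\in\QU.\ (x,y)\in R .
\]
We use the characterization $x\leq_\tau y \iff \forall O\in\tau.\ x\in O\implies y\in O$ from Definition~\ref{def:top_base_subbase}, taking the whole topology as sub-base. We also use the interior description from Lemma~\ref{lem:interior}, item~\ref{item:interior_tau_U}, where the base $B$ is $\QU$ itself.

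\emph{($\Leftarrow$).} Assume $(x,y)\in R$ for every $R\in\QU$, and let $O\in\tau_\QU$ with $x\in O$.
\begin{itemize}
\item By the definition of $\tau_\QU$ in Proposition~\ref{prop:qu}, there is $R\in\QU$ with $R(x)\subseteq O$.
\item Since $(x,y)\in R$, we have $y\in R(x)\subseteq O$.
\end{itemize}
Hence $x\leq_{\tau_\QU} y$.

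\emph{($\Rightarrow$).} Assume $x\leq_{\tau_\QU} y$, and let $R\in\QU$. We must show $(x,y)\in R$, i.e., $y\in R(x)$.
\begin{itemize}
\item Consider $O\defeq \Int[\tau_\QU]{R(x)}$. By Lemma~\ref{lem:interior}, $O=\setb{z}{\exists R'\in\QU.\ R'(z)\subseteq R(x)}$.
\item Taking $R'=R$ and $z=x$ gives $R(x)\subseteq R(x)$, so $x\in O$.
\item $O$ is open and contains $x$, so the hypothesis gives $y\in O$.
\item Every element of $O$ lies in $R(x)$, since $z\in R'(z)\subseteq R(x)$ by the identity property $\Delta\subseteq R'$. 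Therefore $y\in R(x)$.
\end{itemize}

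The only non-trivial point is that $R(x)$ is a $\tau_\QU$-neighbourhood of $x$, even though it need not itself be open. This is exactly what the composition property provides, through item~\ref{item:interior_tau_U} of Lemma~\ref{lem:interior}, which is already established. No further obstacle remains. Since $P$ acts as the identity on underlying sets, it follows that $P(T(X,\QU))=(X,\bigcap\QU)$.
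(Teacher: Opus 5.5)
Your proof is correct, and it takes a slightly different, more economical route than the paper's.

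The paper first invokes Corollary~\ref{cor:open-base} to pick an open base $B$ for $\QU$. By the reasoning of Proposition~\ref{prop:open-entourage}, the sets $R(z)$ with $R\in B$ and $z\in X$ then form a base for $\tau_\QU$, and the paper computes the specialization preorder against that base. There, the inclusion $\leq_{\tau_\QU}\subseteq\bigcap B$ is immediate from $x\in R(x)$. The converse needs the openness of $R(z)$, to find $R_0\in B$ with $R_0(x)\subseteq R(z)$.

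You instead test against the whole topology. So for you the inclusion $\bigcap\QU\subseteq\leq_{\tau_\QU}$ is immediate from the definition of $\tau_\QU$. The composition property is needed in the other direction, packaged in item~\ref{item:interior_tau_U} of Lemma~\ref{lem:interior}: $\Int[\tau_\QU]{R(x)}$ is an open neighbourhood of $x$ contained in $R(x)$.

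Both arguments rest on the same fact, namely that $R(x)$ is a $\tau_\QU$-neighbourhood of $x$. Yours needs only that lemma. The paper's relies on its open-base machinery (the interior of an entourage is an entourage, and open entourages have open sections), which it develops anyway for Theorem~\ref{thm:las:T}. In exchange, the paper describes $\leq_{\tau_\QU}$ via the explicit base $\setb{R(z)}{R\in B\land z\in X}$ of $\tau_\QU$.
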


\begin{proof}
  By Corollary~\ref{cor:open-base}, $\QU$ has an
  open base $B$ and $O\in\tau_\QU\iff\forall x\in O.\exists R\in
  B.R(x)\subseteq O$.
  Therefore, since $B$ is open, we have that $B_\tau\defeq\setb{R(z)}{z\in
    X\land R\in B}$ is a base for $\tau_\QU$.
  By definition, the specialization preorder for $\tau_\QU$ is $x\leq
  y\iff\forall O\in B'.x\in O \implies y \in O$, where $B'$ can be
  any sub-base for $\tau_U$.
  If $B'$ is replaced by $B_\tau$, then $x\leq y \iff\forall z\in
  X.\forall R \in B. x\in R(z) \implies y \in R(z)$.
  We now prove that $x\leq y \iff \forall R \in B. y \in R(x)$:
  \begin{itemize}
  \item $\Longrightarrow$ holds, since $\forall R\in B.\forall x\in
    X.x\in R(x)$, by the identity property;
  \item $\Longleftarrow$ holds, since
    $\forall R\in B.\forall z,x\in X.  x\in R(z)\implies\exists R_0\in
    B.R_0(x)\subseteq R(z) \implies \exists R_0 \in B. y \in R_0(x)
    \subseteq R(z) \implies y \in R(z)$.
  \end{itemize}
  Finally, we have $\forall R \in B. y \in
  R(x)\iff \forall R \in B. (x,y) \in R \iff (x,y)\in \bigcap B\iff
  (x,y)\in \bigcap \QU$.
\end{proof}

\begin{proposition}\label{prop:aqu}
  If $(X,\leq)\in\Po$, then
  $L_T(L_P(X,\leq))=(X,\upperSetOf{\leq})$. As such,
  $\QU_\leq\defeq\QU_{\tau_\leq}=\upperSetOf{\leq}$, \ie, it is the
  principal filter in $\PS(X^2)$ generated by $\leq$.
\end{proposition}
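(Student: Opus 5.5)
We must show that $\QU_{\tau_\leq}$, defined in Theorem~\ref{thm:las:T} as the largest quasi-uniformity (w.r.t.\ inclusion) whose topology is contained in $\tau_\leq$, equals the principal filter $\upperSetOf{\leq}$. The plan is to verify three things in order.

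\emph{Step 1: $\upperSetOf{\leq}$ is a quasi-uniformity.} It is a filter by construction. Every member contains $\leq$, hence contains $\Delta$ by reflexivity. For the composition property, take $R_0\defeq\leq$: transitivity gives $R_0^2\subseteq\leq\subseteq R$.

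\emph{Step 2: $\tau_{\upperSetOf{\leq}}=\tau_\leq$.} Since $\{\leq\}$ is a base for $\upperSetOf{\leq}$, Proposition~\ref{prop:qu} gives that $O\in\tau_{\upperSetOf{\leq}}$ iff $\forall x\in O.\ {\leq}(x)\subseteq O$. Here ${\leq}(x)=\setb{y}{x\leq y}$, so the condition says exactly that $O$ is an upperset, i.e.\ $O\in\tau_\leq$. Consequently $\upperSetOf{\leq}$ belongs to the family whose meet defines $\QU_{\tau_\leq}$, which yields $\upperSetOf{\leq}\subseteq\QU_{\tau_\leq}$.

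\emph{Step 3: the reverse inclusion.} Take any $\QV\in\QUnif_X$ with $\tau_\QV\subseteq\tau_\leq$ and any $R\in\QV$; it suffices to show ${\leq}\subseteq R$. Suppose $x\leq y$. By Lemma~\ref{lem:interior}, $x$ lies in $O\defeq\Int[\tau_\QV]{R(x)}$, because $R(x)\subseteq R(x)$. Now $O\in\tau_\QV\subseteq\tau_\leq$, so $O$ is an upperset, and therefore $y\in O\subseteq R(x)$, i.e.\ $(x,y)\in R$. Thus $\QV\subseteq\upperSetOf{\leq}$ for every such $\QV$. Since meets in $\QUnif_X$ are generated filters (Proposition~\ref{prop:quX:CL}) and $\upperSetOf{\leq}$ is a filter containing each $\QV$, we get $\QU_{\tau_\leq}\subseteq\upperSetOf{\leq}$.

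The only mildly delicate point is Step 3, which relies on the interior characterization to see that $R(x)$ is a $\tau_\QV$-neighbourhood of $x$. Alternatively, Step 3 follows from Proposition~\ref{prop:qu:sp} together with Theorem~\ref{thm:las:T}. Since $T(X,\QU_{\tau_\leq})=(X,\tau_\leq)$, its specialization preorder is $\bigcap\QU_{\tau_\leq}$. On the other hand, the specialization preorder of the Alexandroff topology $\tau_\leq$ is $\leq$ itself (as $P\circ L_P=\id$). So $\leq=\bigcap\QU_{\tau_\leq}$, meaning every entourage contains $\leq$, which is exactly Step 3.
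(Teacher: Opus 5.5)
Your proof is correct. Its structure matches the paper's. The inclusion $\upperSetOf{\leq}\subseteq\QU_{\tau_\leq}$ comes, as in the paper, from $\tau_{\upperSetOf{\leq}}\subseteq\tau_\leq$; you prove equality, but only this inclusion is needed. The argument you give as an ``alternative'' for Step~3 is exactly the paper's proof of the reverse inclusion: $\bigcap\QU_{\tau_\leq}={\leq}$, obtained from $(P\circ T)\circ(L_T\circ L_P)=\id_\Po$ together with Proposition~\ref{prop:qu:sp}.

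Your primary Step~3 is a genuinely more elementary variant. You apply Lemma~\ref{lem:interior} to each $\QV$ with $\tau_\QV\subseteq\tau_\leq$ separately, show that every such $\QV$ lies inside $\upperSetOf{\leq}$, and only then pass to the generated filter. This avoids both the section property $\tau_{\QU_\tau}=\tau$ of Theorem~\ref{thm:las:T} and the open-base machinery (Corollary~\ref{cor:open-base}) behind Proposition~\ref{prop:qu:sp}. The paper's version is shorter because it reuses those results. You also check explicitly that $\upperSetOf{\leq}$ is a quasi-uniformity, which the paper just asserts.
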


\begin{proof}
  Given a preorder $(X,\leq)$, let
  $\QU \defeq \upperSetOf{\leq} \in\QUnif_X$. Since
  $(P \circ T)\circ (L_T \circ L_P)$ is the identity functor on $\Po$,
  we have $\bigcap(\QU_\leq) = \leq$, and therefore
  $\QU_\leq\subseteq\QU$.
To prove the other inclusion $\QU\subseteq\QU_\leq\defeq
\QU_{\tau_\leq}=\qM\setb{\QV\in\QUnif_X}{\tau_\QV\subseteq\tau_\leq}$,
it suffices to prove that $\tau_\QU \subseteq \tau_\leq$.  Indeed, if
$O\in\tau_\QU$, then $\forall x\in O.\exists R\in\QU.R(x)\subseteq O$.
Since $R\in\QU\iff\leq\subseteq R$, we get the following chain of
implications $x\leq y\implies y\in R(x) \implies y \in O$, \ie, $O$ is
upward closed, which is what is needed to conclude that
$O\in\tau_\leq$.
\end{proof}

\section{Metrization Results}
\label{sec:metrization}

The following diagram depicts some of the functors from
Diagram~\eqref{diag:forget2} and two new functors, $\Phi_u$ and $\Phi_c$
(Definition~\ref{def:Phi}), relating categories of quantale-valued
metric spaces to quasi-uniform and topological spaces.
All functors in the diagram are faithful (as their action on arrows is
hom-sets inclusion) and surjective on objects (for $\Phi_u$ and
$\Phi_c$, surjectivity is proven in the metrization
Theorems~\ref{thm:met:qu} and \ref{thm:met:top}).
\begin{equation}
  \label{diag:Met_QU_Top}
    \begin{tikzcd}[row sep = large, column sep = large]
      \MS_u\arrow[r, "-^o"] \arrow[d, "\Phi_u"']&
      \MS_u \arrow[r, hookrightarrow] \arrow[d, rightarrow, "\Phi_u"] &
      \MS_c \arrow[d, rightarrow, "\Phi_c"]\\
      \QUnif\arrow[r, "-^o"']&
      \QUnif \arrow[r, "T"'] \arrow[rd, "U"'] &
      \Top \arrow[d, "U"] \arrow[r, "P"']&
      \Po \arrow[ld, "U"] \\
  &&\Set
  \end{tikzcd}
  \end{equation}
  
\begin{remark}\label{rem:ms-equiv} 
  The fact that $\Phi_u$ and $\Phi_c$ are full
  (Proposition~\ref{prop:Phi_u_c_full_faithful}) together with the
  metrization results (Theorems~\ref{thm:met:qu}
  and~\ref{thm:met:top}) imply that $\Phi_u$ and $\Phi_c$ are
  equivalences.
  Therefore, $U\circ\Phi_c:\MS_c\to\Set$ and
  $U\circ\Phi_u:\MS_u\to\Set$ are topological functors, but they are
  neither strict nor small, unlike $U:\Top\to\Set$ and
  $U:\QUnif\to\Set$.
  These results show, in a formal way, that $\QUnif$ and $\Top$ are
  the qualitative counterparts of $\MS_u$ and $\MS_c$, respectively.
\end{remark}

\begin{definition}\label{def:Phi}
  We define the two actions $\Phi_u$ and $\Phi_c$ on quantale-valued
  metric spaces as follows:
  \begin{itemize}
  \item $\Phi_u(X,d,Q) \defeq (X,\QU_d)$, in which $\QU_d$ is the
    quasi-uniformity generated by the base
    $B^u_d=\setb{R_\delta}{\delta\in\qwb_B\qI}$, where $B$ is any base
    for the continuous lattice $Q$ and
    $R_\delta=\setb{(x,y)\in X^2}{\delta\ll d(x,y)}$.
  \item $\Phi_c(X,d,Q) \defeq (X,\tau_d)$, in which $\tau_d$ is the
    topology generated by the base
    $B^c_d=\setb{B(x,\delta)}{x\in X\land\delta\in\qwb_B\qI}$, where
    $B$ is any base for the continuous lattice $Q$ and
    $B(x,\delta)=\setb{y\in X}{\delta\ll d(x,y)}$.
  \end{itemize}
\end{definition}

\begin{proposition}
  The actions $\Phi_u$ and $\Phi_c$ are well-defined maps.
\end{proposition}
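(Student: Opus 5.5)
We have to show two things for each of $\Phi_u$ and $\Phi_c$. First, $B^u_d$ satisfies the three conditions of Proposition~\ref{prop:qu:base}, and $B^c_d$ is a base for a topology. Second, the generated structures $\QU_d$ and $\tau_d$ do not depend on the choice of the base $B$ for $Q$. The plan is to prove independence first and then verify the axioms for one convenient base. For independence, let $B,B'$ be two bases of $Q$. Given $\delta\in\qwb_B\qI$, interpolation (Proposition~\ref{prop:int:1}) gives $\delta'\in B'$ with $\delta\ll\delta'\ll\qI$. Then $\delta'\ll d(x,y)$ implies $\delta\ll d(x,y)$ by Proposition~\ref{prop:orders_and_approx}, so $R_{\delta'}\subseteq R_\delta$ and $B(x,\delta')\subseteq B(x,\delta)$. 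By symmetry each family refines the other, so they generate the same filter and the same topology.

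\textbf{Axioms for $B^u_d$.} Non-emptiness holds because $\qwb_B\qI$ is directed, hence non-empty. For the identity property, $\delta\ll\qI\qle d(x,x)$ gives $(x,x)\in R_\delta$. For the filter-base condition, take $\delta_0,\delta_1\in\qwb_B\qI$; directedness of $\qwb_B\qI$ gives $\delta\in\qwb_B\qI$ above both, and then $R_\delta\subseteq R_{\delta_0}\cap R_{\delta_1}$ by Proposition~\ref{prop:orders_and_approx}. The composition property is the main step. Given $\delta\ll\qI=\qI\qT\qI$, apply Proposition~\ref{prop:int:2} to get $q'\in B$ with $q'\ll\qI$ and $\delta\ll\qI\qT q'=q'$. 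The first factor is not yet small, so apply Proposition~\ref{prop:int:2} in its dual form to $\delta\ll q'=\qI\qT q'$, obtaining $q''\in B$ with $q''\ll\qI$ and $\delta\ll q''\qT q'$. Let $\delta_0\in\qwb_B\qI$ lie above $q'$ and $q''$. Then $\delta\ll\delta_0\qT\delta_0$. Now suppose $\delta_0\ll d(x,y)$ and $\delta_0\ll d(y,z)$. Then $\delta\ll\delta_0\qT\delta_0\qle d(x,y)\qT d(y,z)\qle d(x,z)$, so $R_{\delta_0}^2\subseteq R_\delta$. This is the step I expect to be the main obstacle: the goal is to get both tensor factors way-below $\qI$ while keeping $\delta$ way-below their product, and it needs the two-sided use of Proposition~\ref{prop:int:2}.

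\textbf{Topology $\tau_d$.} The cleanest route is to observe that $B^c_d=\setb{R(x)}{R\in B^u_d,\ x\in X}$ and that $\tau_d$ should equal $\tau_{\QU_d}$. To show $B^c_d$ is a base of a topology, it suffices to show that each ball is open in $\tau_{\QU_d}$ and that the balls are cofinal in the neighbourhoods. Take $y\in B(x,\delta)$, so $\delta\ll d(x,y)=d(x,y)\qT\qI$. Proposition~\ref{prop:int:2} gives $\epsilon\in B$ with $\epsilon\ll\qI$ and $\delta\ll d(x,y)\qT\epsilon$. If $z\in B(y,\epsilon)$, then $d(x,y)\qT\epsilon\qle d(x,y)\qT d(y,z)\qle d(x,z)$, so $\delta\ll d(x,z)$. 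Hence $B(y,\epsilon)\subseteq B(x,\delta)$. Intersections are handled as follows: a point $y$ in $B(x_0,\delta_0)\cap B(x_1,\delta_1)$ yields $\epsilon_0,\epsilon_1$ as above, and a common upper bound $\epsilon$ in $\qwb_B\qI$ gives a ball $B(y,\epsilon)$ around $y$ contained in both. Each point also lies in some ball, since $x\in B(x,\delta)$. So $B^c_d$ is a base, and the independence argument above shows $\tau_d$ does not depend on the choice of $B$.
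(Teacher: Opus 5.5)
Your proof is correct. For $\Phi_u$ it follows the paper's argument: independence of the base of $Q$ by interpolation (Proposition~\ref{prop:int:1}), the filter and identity conditions from directedness of $\qwb_B\qI$ and $\delta\ll\qI\qle d(x,x)$, and the composition property from two applications of Proposition~\ref{prop:int:2}. The dual form you invoke in the composition step is valid but not needed, so that step is less of an obstacle than you expected. After the first application you have $\delta\ll q'=q'\qT\qI$, and a second application of the one-sided form shrinks the right factor $\qI$, giving $\delta\ll q'\qT q''$ directly; this is what the paper does. The real difference is in $\Phi_c$. The paper does not prove that $B^c_d$ is a base for a topology; it cites the authors' earlier work on robust topology. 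You prove it directly from the interpolation $\delta\ll d(x,y)\qT\qI\implies\exists\epsilon\in\qwb_B\qI.\,\delta\ll d(x,y)\qT\epsilon$, which is the same argument the paper only uses later, in Proposition~\ref{prop:T-Phiu-Phic}. Your version is therefore self-contained and also yields $\tau_d=\tau_{\QU_d}$. One ordering point: independence of $\tau_d$ from the base of $Q$ does not follow from the centre-wise inclusions $B(x,\delta')\subseteq B(x,\delta)$ alone. Centre-wise refinement between two families of sets does not in general make them generate the same topology. You also need the fact, proved only in your last paragraph, that every ball contains a ball around each of its points, so that $O\in\tau_d$ iff each $x\in O$ has some $B(x,\delta)\subseteq O$. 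If you establish that fact first, your independence claim for $\tau_d$ goes through as stated.
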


\begin{proof}
  First, we prove that the two actions do not depend on the choice of
  the base for the continuous lattice $Q$. In particular, we can
  always choose the biggest base (\ie, $Q$). Consider a base $B$ for
  $Q$.  Then, by the interpolation property
  (Proposition~\ref{prop:int:1}):
  \begin{equation*}
    \forall\delta'\in\qwb\qI.\exists\delta\in\qwb_B\qI.\ \delta'\ll\delta.
  \end{equation*}
  Since $\delta'\qle\delta\ll\qI$ implies $R_{\delta}\subseteq
  R_{\delta'}$ and $\forall x\in X.B(x,\delta)\subseteq B(x,\delta')$,
  we conclude that $B^u_d$ and $Q^u_d$ are bases for the same
  quasi-uniformity on $X$ and $B^c_d$ and $Q^c_d$ are bases for the
  same topology on $X$.

  Next, we prove that $\QU_d$, namely the upperset in $\PS(X^2)$
  generated by $B^u_d$, is a quasi-uniformity on $X$:
  \begin{itemize}
  \item $\QU_d$ is a filter, because $B^u_d$ is a filtered subset of
    $\PS(X^2)$.  In fact, $\qwb_B\qI$ is a directed subset of
    $Q$, in particular $\bot\in\qwb_B\qI$, thus $R_\bot\in B^u_d$, and
    $\forall\delta_1,\delta_2\in\qwb_B\qI.\exists\delta\in\qwb_B\qI.
    \delta_1,\delta_2\qle\delta$, thus $R_\delta\subseteq
    R_{\delta_1}\cap R_{\delta_2}$.
  \item $\forall R\in\QU_d.\Delta\subseteq R$, because
    $\forall\delta\in\qwb_B\qI.\Delta\subseteq R_\delta$, since
    $\delta\ll\qI\qle d(x,x)$.
  \item $\QU_d$ has the composition property, because
    $\forall\delta'\ll\qI.\exists\delta\in\qwb_B\qI.
    \delta'\ll\delta\qT\delta$ by Proposition~\ref{prop:int:2}, namely
    \begin{itemize}
    \item $\delta'\ll\qI=\qI\qT\qI\implies$ (by
      Proposition~\ref{prop:int:2})
    \item $\exists\delta_1\in\qwb_B\qI.\delta'\ll\delta_1\qT\qI\implies$
      (by Proposition~\ref{prop:int:2})
    \item
      $\exists\delta_1,\delta_2\in\qwb_B\qI.\delta'\ll\delta_1\qT\delta_2$
    \item since $\qwb_B\qI$ is directed, $\exists
      \delta\in\qwb_B\qI.\delta_1,\delta_2\qle\delta$, which implies
      $\delta'\ll\delta\qT\delta$.
    \end{itemize}
    Therefore, we have the following chain of implications
    $R_\delta^2(x,z)\implies\exists y.\delta\ll
    d(x,y)\land \delta\ll d(y,z)\implies\\\exists y.\delta\qT\delta\qle d(x,y)\qT
    d(y,z)\qle d(x,z)\implies \delta'\ll d(x,z)\implies
    R_{\delta'}(x,z)$.
  \end{itemize}
  For the proof that $\tau_d$ is a topology on $X$ we refer to
 ~\parencite[Def~4.5 and
  Prop~4.8]{Dagnino_Farjudian_Moggi:Robust_Topology:arXiv:2025}, and
  for the proof that $B^c_d$ is a base, rather than a sub-base for
  $\tau_d$, we use~\parencite[Lemma
  4.7]{Dagnino_Farjudian_Moggi:Robust_Topology:arXiv:2025}.
\end{proof}

\begin{remark}
  The cardinality of the base $B^u_d$ for the quasi-uniformity $\QU_d$
  is bounded by the cardinality of the base $B$ for $Q$, while the
  cardinality of the base $B^c_d$ for the topology $\tau_d$ is bounded
  by the cardinality of $B\times X$.
\end{remark}

\begin{proposition} \label{prop:T-Phiu-Phic}
  $T(\Phi_u(X,d,Q))=\Phi_c(X,d,Q)$ for every $(X,d,Q)\in\MS_u$.
\end{proposition}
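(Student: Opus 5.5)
We have to show that $\tau_{\QU_d}=\tau_d$, i.e.\ that two topologies on $X$ coincide. The plan is to compare them through explicit bases and to use the description of $\tau_\QU$ from Proposition~\ref{prop:qu} with the base $B^u_d=\setb{R_\delta}{\delta\in\qwb_B\qI}$ of $\QU_d$. Taking the largest base $Q$ for the continuous lattice (allowed, since $\Phi_u$ and $\Phi_c$ do not depend on that choice), we get
\begin{equation*}
O\in\tau_{\QU_d}\iff\forall x\in O.\exists\delta\ll\qI.\ R_\delta(x)\subseteq O .
\end{equation*}
The key identity is $R_\delta(x)=\setb{y}{\delta\ll d(x,y)}=B(x,\delta)$. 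Hence $\tau_{\QU_d}$ consists exactly of the sets $O$ such that every $x\in O$ has some ball $B(x,\delta)\subseteq O$ with $\delta\ll\qI$.

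For $\tau_d\subseteq\tau_{\QU_d}$, it suffices to show that every basic open ball $B(z,\delta)$ lies in $\tau_{\QU_d}$, since $\tau_{\QU_d}$ is closed under unions. Take $x\in B(z,\delta)$, so $\delta\ll d(z,x)=d(z,x)\qT\qI$. Proposition~\ref{prop:int:2} gives $\delta'\ll\qI$ with $\delta\ll d(z,x)\qT\delta'$. For $y\in B(x,\delta')$ we have $\delta'\qle d(x,y)$, hence $\delta\ll d(z,x)\qT\delta'\qle d(z,x)\qT d(x,y)\qle d(z,y)$, so $y\in B(z,\delta)$. Therefore $B(x,\delta')\subseteq B(z,\delta)$, as required. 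This is the same argument as in~\parencite[Lemma~4.7]{Dagnino_Farjudian_Moggi:Robust_Topology:arXiv:2025}, and I expect it to be the only step needing care, namely the use of interpolation through the tensor.

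For $\tau_{\QU_d}\subseteq\tau_d$, let $O\in\tau_{\QU_d}$. For each $x\in O$ pick $\delta_x\ll\qI$ with $B(x,\delta_x)\subseteq O$. Each such ball contains its centre, because $\delta_x\ll\qI\qle d(x,x)$. So $O=\bigcup_{x\in O}B(x,\delta_x)$ is a union of elements of the base $B^c_d$ and hence lies in $\tau_d$.

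The two inclusions give $\tau_{\QU_d}=\tau_d$. Since $T$ and $\Phi_c$ both leave the carrier $X$ unchanged, this yields $T(\Phi_u(X,d,Q))=\Phi_c(X,d,Q)$.
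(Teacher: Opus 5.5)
Your proof is correct and follows essentially the paper's route. The identity $R_\delta(x)=B(x,\delta)$ reduces $\tau_{\QU_d}$ to the ``some ball around each point'' description, and interpolation through the tensor (Proposition~\ref{prop:int:2}) shows that every ball is open in that sense. The only difference is presentation: you give two explicit inclusions, including the centre-in-ball step that the paper leaves implicit, while the paper packages both as an alternative characterization of $\tau_d$.
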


\begin{proof}
  The claim amounts to proving that $\tau_d$ and $\tau_{\QU_d}$
  coincide.
  We use an alternative characterization of $\tau_d$, namely
  $O\in\tau_d\iff\forall x\in
  O.\exists\delta\in\qwb_B\qI.B(x,\delta)\subseteq O$.  In fact,
  $\epsilon\ll q=q\qT\qI\implies\exists\delta\in\qwb_B\qI.\epsilon\ll
  q\qT\delta$ by Proposition~\ref{prop:int:2}. Thus,
  $\forall\epsilon\ll\qI.\forall x\in B(y,\epsilon).\exists
  \delta\in\qwb_B\qI.B(x,\delta)\subseteq B(y,\epsilon)$, because
  $\epsilon\ll d(y,x)\implies\exists\delta\in\qwb_B\qI.\epsilon\ll
  d(y,x)\qT\delta\implies\forall z\in B(x,\delta).  \epsilon\ll
  d(y,x)\qT\delta\qle d(y,x)\qT d(x,z)\qle d(y,z)$.

  Given a base $B$ for $Q$ and $O\subseteq X$, one has
  \begin{equation*}
   (\forall x\in O.\exists\delta\in\qwb\qI.B(x,\delta)\subseteq O)\iff
  (\forall x\in O.\exists R\in B^u_b.R(x)\subseteq O), 
  \end{equation*}
  because $R_\delta(x)=B(x,\delta)$.  Therefore,
  $\tau_d=\tau_{\QU_d}$.
\end{proof}

\begin{remark}
Thanks to Proposition~\ref{prop:T-Phiu-Phic}, for a quantale-valued
metric space $(X,d,Q)$, we know that $\tau_{\QU_d} = \tau_d$.
Moreover, it is easy to see that the base $B^u_d$ of the
quasi-uniformity $\QU_d$ is open in the sense of
Definition~\ref{def:open-base}.
\end{remark}

\begin{proposition}
  \label{prop:Phi_u_c_full_faithful}
  The functors $\Phi_u$ and $\Phi_c$ are full\&faithful.
\end{proposition}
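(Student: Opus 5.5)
Faithfulness is immediate: both functors act on arrows as hom-set inclusions, so it remains to show fullness. For $\Phi_u$ this means: if $f:X\to X'$ is quasi-uniformly continuous from $(X,\QU_d)$ to $(X',\QU_{d'})$, then $f$ is uniformly continuous from $(X,d,Q)$ to $(X',d',Q')$. For $\Phi_c$ it means: if $f$ is continuous from $(X,\tau_d)$ to $(X',\tau_{d'})$, then $f$ is continuous in the $\epsilon$-$\delta$ sense of Definition~\ref{def:Quantale_valued_metric_spaces}. The converse directions (that $\Phi_u,\Phi_c$ are well defined on arrows) are the same calculations read backwards, so I will check them along the way.

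For $\Phi_u$, fix $\epsilon\ll'\qI'$. Then $R'_\epsilon=\setb{(x',y')}{\epsilon\ll' d'(x',y')}$ belongs to the base $B^u_{d'}$ (taking the base $Q'$ of $Q'$), so $R'_\epsilon\in\QU_{d'}$. By quasi-uniform continuity, $f^\dagger(R'_\epsilon)\in\QU_d$. Since $B^u_d$ is a base for $\QU_d$, there is $\delta\ll\qI$ with $R_\delta\subseteq f^\dagger(R'_\epsilon)$. Unfolding, this says $\delta\ll d(x,y)\implies\epsilon\ll' d'(f(x),f(y))$ for all $x,y$, which is exactly uniform continuity. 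Conversely, uniform continuity gives $R_\delta\subseteq f^\dagger(R'_\epsilon)$, and upward closure then yields $f^\dagger(R')\in\QU_d$ for every $R'\in\QU_{d'}$.

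For $\Phi_c$, fix $x\in X$ and $\epsilon\ll'\qI'$. The main point is that the ball $B(f(x),\epsilon)$ need not be open, so I cannot pull it back directly. Instead I use its interior, with two facts. First, by Proposition~\ref{prop:T-Phiu-Phic}, $\tau_{d'}=\tau_{\QU_{d'}}$. Second, by Lemma~\ref{lem:interior}(1) applied to the base $B^u_{d'}$, the set $O'\defeq\Int[\tau_{d'}]{B(f(x),\epsilon)}=\setb{z}{\exists\eta\ll'\qI'.B(z,\eta)\subseteq B(f(x),\epsilon)}$ contains $f(x)$, since $B(f(x),\epsilon)\subseteq B(f(x),\epsilon)$. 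By continuity, $f^{-1}(O')$ is $\tau_d$-open and contains $x$. By the alternative characterization of $\tau_d$ proved in Proposition~\ref{prop:T-Phiu-Phic}, there is $\delta\ll\qI$ with $B(x,\delta)\subseteq f^{-1}(O')$. Then $\delta\ll d(x,y)$ implies $f(y)\in O'\subseteq B(f(x),\epsilon)$, that is, $\epsilon\ll' d'(f(x),f(y))$. This is pointwise continuity.

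For the converse direction of $\Phi_c$, take $O'\in\tau_{d'}$ and $x\in f^{-1}(O')$. Choose $\epsilon$ with $B(f(x),\epsilon)\subseteq O'$, then $\delta$ from $\epsilon$-$\delta$ continuity, so that $B(x,\delta)\subseteq f^{-1}(O')$. Hence $f^{-1}(O')$ is open.

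The only delicate step is the passage through the interior in the $\Phi_c$ case, which relies on Lemma~\ref{lem:interior} and Proposition~\ref{prop:T-Phiu-Phic}. Everything else is a direct unfolding of the definitions.
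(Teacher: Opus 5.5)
Your proof is correct and takes the same route as the paper, which also just unfolds the definitions: for $\Phi_u$ through $R_\delta\subseteq f^\dagger(R'_\epsilon)$, and for $\Phi_c$ through the equivalence of $\epsilon$-$\delta$ continuity with $\forall x.\forall\epsilon\ll'\qI'.\exists\delta\ll\qI.\,f(B(x,\delta))\subseteq B'(f(x),\epsilon)$. Your detour through the interior is harmless but unnecessary: contrary to your remark, every ball $B(y,\epsilon)$ with $\epsilon\ll\qI$ is already open in $\tau_d$, since the balls form the base $B^c_d$ and the proof of Proposition~\ref{prop:T-Phiu-Phic} shows that each ball contains a ball around each of its points, so your $O'$ is simply $B(f(x),\epsilon)$ itself.
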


\begin{proof}
  The claim for $\Phi_u$ amounts to showing that
  $f:\MS_u((X,d,Q),(X',d',Q'))\iff f:\QUnif((X,\QU_d),(X',\QU_{d'}))$
  for every map $f:X \to X'$.  In fact, we have the following chain of
  equivalences:
  \begin{align*}
    f:\MS_u((X,d,Q),(X',d',Q'))  &\iff  \forall\epsilon\ll'\qI'.\exists\delta\ll\qI.\forall
             x,y\in X.\ \delta\ll d(x,y)\implies\epsilon\ll' d'(f(x),f(y))\\
    & \iff \forall\epsilon\ll'\qI'.\exists\delta\ll\qI.\forall
             x,y\in X.\ R_\delta(x,y)\implies R'_\epsilon(f(x),f(y))\\
    & \iff f:\QUnif((X,\QU_d),(X',\QU_{d'})).
  \end{align*}
  Similarly, the claim for $\Phi_c$ amounts to showing that
  $f:\MS_c((X,d,Q),(X',d',Q'))\iff f:\Top((X,\tau_d),(X',\tau_{d'}))$
  for every map $f:X \to X'$.  In fact, we have the following chain of
  equivalences:
  \begin{align*}
     f:\MS_c((X,d,Q),(X',d',Q')) &\iff \forall x\in X.\epsilon\ll'\qI'.\exists\delta\ll\qI.\forall
             y\in X.\ \delta\ll d(x,y)\implies\epsilon\ll' d'(f(x),f(y))\\
    &\iff \forall x\in X.\forall\epsilon\ll'\qI'.\exists\delta\ll\qI.\forall
             y\in B(x,\delta).\ f(y)\in B'(f(x),\epsilon) \\
    &\iff \forall x\in
            X.\forall\epsilon\ll'\qI'.\exists\delta\ll\qI.\ 
            f(B(x,\delta))\subseteq B'(f(x),\epsilon)\\
    &\iff f:\Top((X,\tau_d),(X',\tau_{d'})). \qedhere
  \end{align*}
\end{proof}

\begin{remark}
  To prove that the functor $\Phi_u:\MS_u\rTo\QUnif$ is an
  equivalence, we show that for every quasi-uniform space $(X,\QU)$
  there exists $(X,d,Q)\in\MS_u$ such that $(X,\QU)=\Phi_u(X,d,Q)$.
  All metric spaces $(X,d,Q)$ that map to $( X, \QU)$ are necessarily
  isomorphic in $\MS_u$, since $\Phi_u$ is full\&faithful, but the
  continuous quantales $Q$ could be non-isomorphic and differ even in
  cardinality.
Moreover, we can always restrict to \emph{affine} continuous
quantales, \ie, quantales where the unit $\qI$ is the top element
$\qt$, since every $(X,d,Q)\in\MS_u$ is isomorphic to
$(X,d_\qI,Q_\qI)$, where $Q_\qI$ is $Q$ restricted to $\qLowerSet\qI$
and $d_\qI:X^2\to Q_\qI$ is $d_\qI(x,y)=d(x,y)\qm\qI$.
\end{remark}

  In the context of computable analysis, quasi-uniform spaces that
  possess a countable base are most relevant.  For such spaces, one
  would like the continuous quantale $Q$ to have a countable basis as
  well, so that one can stay within the computational framework of
  effectively given domains~\parencite{smyth1977effectively}.
  The construction that we propose starts from a base $B$ for $\QU$
  and produces a prime-continuous quantale $Q$ that has a prime-base
  of cardinality not greater than that of $B$.  In particular, if
  $\QU$ has a countable base, then the prime-continuous quantale $Q$
  has a countable prime-base as well, which implies that $Q$ as a
  continuous quantale has a countable base.

\subsection{Abstract Prime-bases and Rounded Lowersets}
\label{subsec:Rounded_Lowersets}

We introduce the notions of abstract prime-base $(B,\prec)$ and
rounded lowerset (in an abstract prime-base), which are variants of
the well-known notions of abstract base and rounded ideal~\parencite[Sec
2.2.6]{AbramskyJung94-DT}. We prove that the poset of rounded
lowersets ordered by inclusion is a prime-continuous lattice
(Theorem~\ref{thm:apb:pcl}).

\begin{definition}\label{def:apb}
  An \emph{abstract prime-base} is a pair $(B,\prec)$ with $\prec$ a
  binary relation on $B$ that is:
  \begin{itemize}
  \item \textbf{transitive:} $\forall x,y,z\in B.\ x\prec y\prec z\implies x\prec z$,
  \item \textbf{dense:}
    $\forall x,z\in B.\ x\prec z\implies\exists y\in B.\ x\prec y\prec z$.
  \end{itemize}
  A \emph{rounded lowerset} is a subset $A\subseteq B$ that is:
  \begin{itemize}
  \item \textbf{rounded:} $\forall x\in A.\exists y\in A.\ x\prec y$,
  \item \textbf{lower:}
    $\forall y\in A. \forall x \in B.\ x\prec y \implies x\in A$.
  \end{itemize}
  $\RD(B,\prec)$ denotes the poset of rounded lowersets in $(B,\prec)$
  ordered by inclusion.
\end{definition}

If $(B,\prec)$ is a preorder (\ie, transitive and reflexive), then it is
an abstract prime-base and the rounded lowersets are the lowersets.
In fact, if $\prec$ is reflexive, then $\prec$ is trivially dense and
every $A\subseteq B$ is rounded.

\begin{remark}
  Construction of prime-continuous lattices from abstract prime-bases
  has appeared in the literature before, albeit through a different
  terminology. In the context of information systems,
  Vickers~\parencite{Vickers:Infosys:1993} introduced the concept of
  \emph{infosys}, which is precisely what we call an abstract
  prime-base. Instead of rounded lowersets, he considered rounded
  uppersets (which he called upper-closed
  sets~\parencite[Definition~2.2]{Vickers:Infosys:1993}). As such, the
  prime-continuous lattice that is constructed
  in~\parencite[Proposition~2.3]{Vickers:Infosys:1993} is isomorphic to
  $\RD(B,\succ)$. In fact, as far back as 1953, Raney had already
  investigated this construction of prime-continuous
  lattices~\parencite{Raney:Subdirect:1953}.

  Our terminology is closer to that used in the context of abstract
  bases and rounded ideal completion~\parencite[Sec
  2.2.6]{AbramskyJung94-DT}. We also present the proof of
  Theorem~\ref{thm:apb:pcl}---even though it is
  essentially~\parencite[Proposition~2.3]{Vickers:Infosys:1993}---with the
  intention of making the content self-contained and easier to follow
  for the reader.
\end{remark}

\begin{example}\label{ex:R+:apb}
  Consider the quantale $\RQ_+=([0,\infty],\qle,+,0)$
  in~\parencite{lawvere1973metric}, which allows viewing ordinary
  metric spaces as small $\RQ_+$-enriched categories, where
  $[0,\infty]$ is the set of non-negative real numbers extended with
  $\infty$ and ordered by $x\qle y\iff x\geq y$. Thus, $0$ and
  $\infty$ are the top and bottom elements, respectively.

The linear order $([0,\infty],\qle)$ is a prime-continuous lattice
with $\qJ S=\inf S$, $x\lll y\iff x>y$, and the set ${\Q_{>0}}$ of
positive rational numbers is a countable prime-base.
It is easy to see that $({\Q_{>0}},\lll)$ is an abstract prime-base
and the poset $\RD({\Q_{>0}},\lll)$ is isomorphic to
$([0,\infty],\qle)$, namely an element $x$ in $[0,\infty]$ corresponds
to the rounded lowerset $B(x)=\setb{q\in {\Q_{>0}}}{q>x}$. In
particular, $B(0)={\Q_{>0}}$ and $B(\infty)=\emptyset$.
  Also, $[0,\infty]$ is a prime-base, but uncountable,
  $({[0,\infty]},\lll)$ is an abstract prime-base, and the poset
  $\RD({[0,\infty]},\lll)$ is isomorphic to $([0,\infty],\qle)$,
  namely, an element $x$ in $[0,\infty]$ corresponds to the rounded
  lowerset $(x,\infty]$.  Note that lowersets of the form $[x,\infty]$
  are not rounded.
\end{example}

\begin{theorem}\label{thm:apb:pcl}
  Given an abstract prime-base $(B,\prec)$, the poset $\RD(B,\prec)$
  is a prime-continuous lattice with join given by union, prime-base
  $\setb{\qLowerSet a}{a\in B}$, and
  $A'\lll A\iff\exists a\in A.A' \subseteq\qLowerSet a$, where
  $\qLowerSet a\defeq\setb{x \in B}{x\prec a}$.
\end{theorem}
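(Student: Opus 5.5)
I would prove the statement in four steps, in this order: joins are unions; each $\qLowerSet a$ lies in $\RD(B,\prec)$; the characterization of $\lll$; and prime-continuity with the stated prime-base.

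\textbf{Joins are unions.} Let $(A_i\mid i\in I)$ be a family of rounded lowersets. Both roundedness and lowerness are pointwise conditions: each asks, for an element $x$, for a witness in the set, or that some element lie in the set. Hence $\bigcup_i A_i$ is again rounded and lower. It is clearly the least upper bound under inclusion, so $\RD(B,\prec)$ is a complete lattice with $\qJ_i A_i=\bigcup_i A_i$. The empty union gives the bottom $\emptyset$.

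\textbf{Each $\qLowerSet a$ is rounded lower.} Lowerness of $\qLowerSet a$ follows from transitivity: if $x\prec y\prec a$ then $x\prec a$. Roundedness follows from density: if $x\prec a$, pick $y$ with $x\prec y\prec a$, so $y\in\qLowerSet a$ and $x\prec y$. I would also record that every $A\in\RD(B,\prec)$ satisfies $A=\bigcup_{a\in A}\qLowerSet a$. The inclusion $\supseteq$ is lowerness, and $\subseteq$ is roundedness.

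\textbf{Characterization of $\lll$.} This is the main step.

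For ($\Leftarrow$), suppose $A'\subseteq\qLowerSet a$ with $a\in A$, and let $A\subseteq\bigcup D$ for some $D\subseteq\RD(B,\prec)$. Then $a\in E$ for some $E\in D$. By lowerness of $E$ we get $\qLowerSet a\subseteq E$, hence $A'\subseteq E$. So $A'\lll A$.

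For ($\Rightarrow$), apply the definition of $\lll$ to the family $D=\setb{\qLowerSet a}{a\in A}$. By the decomposition above its union is exactly $A$. Totally-below then yields some $a\in A$ with $A'\subseteq\qLowerSet a$.

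The key point is using this decomposition as the test family. Since $\lll$ quantifies over arbitrary families, not just directed ones, no directedness argument is needed.

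\textbf{Prime-continuity and the prime-base.} Take $A\in\RD(B,\prec)$. By the characterization, for each $a\in A$ we have $\qLowerSet a\lll A$, taking the witness $a$ itself. Hence
\[
A=\bigcup_{a\in A}\qLowerSet a\subseteq\qJ\qtb_P A\subseteq\qJ\qtb A\subseteq A,
\]
where $P=\setb{\qLowerSet a}{a\in B}$. The last inclusion holds because $A'\lll A$ implies $A'\subseteq A$ by Proposition~\ref{prop:orders_and_approx}. Therefore $A=\qJ\qtb_P A$, so $P$ is a prime-base and $\RD(B,\prec)$ is prime-continuous.

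I expect no real obstacle. The only delicate point is remembering that $\qLowerSet a$ is rounded only thanks to density, and that the decomposition $A=\bigcup_{a\in A}\qLowerSet a$ uses roundedness. Both facts are needed for the ($\Rightarrow$) direction of the characterization.
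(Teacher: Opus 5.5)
Your proposal is correct and follows essentially the same route as the paper. It shows that unions are joins, that $\qLowerSet a\in\RD(B,\prec)$ by transitivity and density, that $A=\bigcup_{a\in A}\qLowerSet a$, and it uses $\setb{\qLowerSet a}{a\in A}$ as the test family for ($\Rightarrow$); the only cosmetic difference is that you prove ($\Leftarrow$) directly, whereas the paper first shows $\qLowerSet a\lll A$ for $a\in A$ and then invokes Proposition~\ref{prop:orders_and_approx}, which amounts to the same argument.
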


\begin{proof}
  Given a subset $S\subseteq\RD(B,\prec)$, let $A'=\cup S\subseteq B$,
  which is a rounded lowerset, because:
  \begin{itemize}
  \item $x\prec y\in A'\implies(\exists A\in S.x\prec y\in A)\implies$
    [since $A$ is a lower set]
    $(\exists A\in S.x\in A)\implies x\in A'$.

  \item $x\in A'\implies(\exists A\in S.x\in A)\implies$ [since $A$ is
    rounded]
    $(\exists A\in S.\exists y\in A.x\prec y)\implies \exists y\in
    A'.x\prec y$.\footnote{Note that, so far, the properties of
      $\prec$ have not been used.}
  \end{itemize}
  We have: $\forall a\in B. \ \qLowerSet a\in\RD(B,\prec)$, because:
  \begin{itemize}
  \item $x\prec y\prec a\implies x\prec a$, by transitivity of
    $\prec$,
  \item $x\prec a\implies \exists y\in B.x\prec y\prec a$, by
    density  of $\prec$.
  \end{itemize}
  Furthermore, $A\in\RD(B,\prec)$ implies
  $A=\cup_{a\in A}\qLowerSet a$.  In fact, by definition of a rounded
  lowerset:

  \begin{itemize}
  \item $x\in A\implies\exists a\in A.x\prec a$, thus
    $x\in\qLowerSet a$ for some $a\in A$, which proves that $A
    \subseteq \cup_{a\in A}\qLowerSet a$,
  \item $x\prec a\in A\implies x\in A$, thus
    $\qLowerSet a\subseteq A$ for every $a\in A$, which proves that
    $A \supseteq \cup_{a\in A}\qLowerSet a$.
  \end{itemize}
  
  Therefore, to prove that $\setb{\qLowerSet a}{a\in B}$ is a
  prime-base, it suffices to prove that $a\in A$ implies
  $\qLowerSet a\lll A$ , \ie,
  \begin{equation*}
  \forall S\subseteq\RD(B,\prec).\ A\subseteq\cup S\implies \exists
  A'\in S.\qLowerSet a\subseteq A'.  
  \end{equation*}
  In fact $a\in A\subseteq\cup S$
  implies $a\in A'$ for some $A'\in S$, thus
  $\qLowerSet a\subseteq A'$, because $A'\in\RD(B,\prec)$.
  Finally, we prove that $A'\lll A\iff\exists a\in A.\ A'\subseteq\qLowerSet a$:
  \begin{itemize}
  \item $A'\lll A=\cup_{a\in A}\qLowerSet a\implies\exists a\in
    A.\ A'\subseteq\qLowerSet a$, by definition of $\lll$,
  \item $\exists a\in A.\ A'\subseteq\qLowerSet a\implies A'\lll A$,
    because $\forall a\in A.\qLowerSet a\lll A$, together with
    Item~\ref{item:order_totally_below} of
    Proposition~\ref{prop:orders_and_approx}. \qedhere
  \end{itemize}
\end{proof}

If $(B,\prec)$ is a preorder, then $\RD(B,\prec)$ is prime-algebraic,
since, for every $a\in B$, $a\in\qLowerSet a$ and thus
$\qLowerSet a\lll\qLowerSet a$. Since every prime-continuous lattice
is completely distributive (see~\parencite[Thm 7.1.3]{AbramskyJung94-DT}),
we conclude that $\RD(B,\prec)$ is a \emph{locale}, \ie, a quantale
with meet $\qm$ as tensor and the top element $\qt$ as unit. There is,
however, another way to turn $\RD(B,\prec)$ into a quantale, given an
associative operation $\qT$ on $B$ satisfying certain properties.

\begin{theorem}\label{thm:apb:pcq}
  If $(B,\prec)$ is an abstract prime-base and $\qT:B^2\to B$ is an
  associative operation which is:
  \begin{itemize}
  \item \emph{roundB:} $\forall x\in B.\exists y\in B.x\prec y$,
  \item \emph{monotone:}
    $x\prec x'\land y\prec y'\implies x\qT y\prec x'\qT y'$,
  \item \emph{denseT:}
    $x\prec x'\implies\exists y\in B.\ x\prec y\qT x'\qT y\prec x'$,
  \item \emph{lowerT:}
    $x\prec x'\implies\forall y\in B.\ (y\qT x \prec x') \wedge (x\qT y\prec x')$,
  \end{itemize}
  then, $\RD(B,\prec)$ is a quantale with tensor
  $A_1\hat{\qT}A_2\defeq\bigcup\setb{\qLowerSet(a_1\qT a_2)}{a_1\in
    A_1\land a_2\in A_2}$ and unit $B$.
\end{theorem}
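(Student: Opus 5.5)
By Theorem~\ref{thm:apb:pcl}, $\RD(B,\prec)$ is already a complete lattice with joins given by unions. So what remains is to show three things: $\hat{\qT}$ is well defined (it lands in $\RD(B,\prec)$); it is associative with unit $B$; and it distributes over arbitrary joins in each argument.

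\textbf{Well-definedness.} $A_1\hat{\qT}A_2$ is a union of sets of the form $\qLowerSet c$. Each $\qLowerSet c$ is a rounded lowerset by the proof of Theorem~\ref{thm:apb:pcl}, and that proof shows unions of rounded lowersets are rounded lowersets. Hence $A_1\hat{\qT}A_2\in\RD(B,\prec)$. Note also that $B$ itself is a rounded lowerset: lowerness is trivial, and roundedness is exactly \emph{roundB}.

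\textbf{Distributivity.} Directly from the definition, $(\bigcup S_1)\hat{\qT}(\bigcup S_2)=\bigcup\setb{\qLowerSet(a_1\qT a_2)}{a_1\in\bigcup S_1, a_2\in\bigcup S_2}=\bigcup\setb{A_1\hat{\qT}A_2}{A_1\in S_1,A_2\in S_2}$. This also covers empty families, since both sides are then $\emptyset$.

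\textbf{Associativity.} I will show both $(A_1\hat{\qT}A_2)\hat{\qT}A_3$ and $A_1\hat{\qT}(A_2\hat{\qT}A_3)$ equal $\bigcup\setb{\qLowerSet(a_1\qT a_2\qT a_3)}{a_i\in A_i}$. Take the left side. An element there lies in $\qLowerSet(c\qT a_3)$ with $c\prec a_1\qT a_2$. By \emph{monotone}, the wanted bound would follow from $a_3\prec a_3'$, which roundedness of $A_3$ supplies: $c\qT a_3\prec a_1\qT a_2\qT a_3'$, and transitivity gives $x\prec a_1\qT a_2\qT a_3'$. Conversely, let $x\prec a_1\qT a_2\qT a_3$. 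Use roundedness to get $a_1\prec a_1'$ and $a_2\prec a_2'$ in $A_1,A_2$. Then \emph{monotone} gives $a_1\qT a_2\prec a_1'\qT a_2'$, so $a_1\qT a_2\in A_1\hat{\qT}A_2$. Hence $x\in\qLowerSet((a_1\qT a_2)\qT a_3)$, which is a term of the left side. The right side is symmetric.

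\textbf{Unit.} This is the step I expect to be the main obstacle: showing $A\hat{\qT}B=A$ (and symmetrically $B\hat{\qT}A=A$), where \emph{denseT} and \emph{lowerT} must be used.

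For $\subseteq$: let $x\prec a\qT b$ with $a\in A$. Roundedness gives $a\prec a'\in A$, and \emph{lowerT} yields $a\qT b\prec a'$. By transitivity $x\prec a'$, so $x\in A$ by lowerness.

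For $\supseteq$: let $x\in A$. Roundedness gives $x\prec x'\in A$, then \emph{denseT} gives $y$ with $x\prec y\qT x'\qT y\prec x'$. I want to exhibit $x\prec a\qT b$ with $a\in A$. Take $a\defeq y\qT x'\qT y$, which lies in $A$ by lowerness since $a\prec x'$. By \emph{roundB} there is $z$ with $y\prec z$; then \emph{lowerT} gives $y\qT x'\qT y\prec z$. Chaining these does not directly express $x$ below $a\qT b$, however. Instead I plan to apply \emph{denseT} to a suitable pair and aim for $x\prec y\qT x'\qT y\preceq\ldots$, using \emph{monotone} to push one factor: from $x'\prec x''$ (roundedness again) conclude $y\qT x'\qT y\prec (y\qT x'')\qT y$. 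Here $y\qT x''\in A$ whenever $y\qT x''\prec$ something in $A$, which \emph{lowerT} provides via $x''\prec x'''\in A$. This gives $x\prec (y\qT x'')\qT y\in\qLowerSet(a\qT b)$ with $a=y\qT x''\in A$ and $b=y\in B$, as required. The left-unit case is symmetric, placing the $A$-factor on the right, i.e.\ $x\prec y\qT(x''\qT y)$ with $x''\qT y\in A$. If this chain of roundedness steps does not close, my fallback is to iterate \emph{denseT} once more before applying \emph{monotone}.
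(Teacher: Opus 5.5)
Your treatment of well-definedness, distributivity, associativity and the inclusion $A\hat{\qT}B\subseteq A$ is correct and follows the paper's proof. The gap is in the reverse inclusion $A\subseteq A\hat{\qT}B$.

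The step ``from $x'\prec x''$ conclude $y\qT x'\qT y\prec(y\qT x'')\qT y$'' is not an instance of \emph{monotone}. That axiom needs a strict relation in \emph{both} arguments. Keeping $y$ fixed on both sides therefore silently assumes $y\prec y$, and $\prec$ is in general irreflexive.

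The inference genuinely fails. Take $B=\Q_{>0}$ with $x\prec y$ iff $x>y$, and take $\qT$ to be $\max$. All four hypotheses hold in this model. \emph{denseT} forces any chosen $y$ to satisfy $x>y>x'$. Hence $y\qT x'\qT y=y=(y\qT x'')\qT y$, and your claimed relation reads $y>y$. Your final target happens to be true in this particular model, but your argument does not establish it in general. Your fallback of ``iterating \emph{denseT}'' does not address the real problem, which is irreflexivity rather than a missing interpolant.

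The repair uses the $z$ with $y\prec z$ that you already obtained from \emph{roundB}. The difference is that you should apply \emph{lowerT} to $x'\prec x''$ (with $x''\in A$ by roundedness) instead of to $y\prec z$:
\begin{itemize}
\item \emph{lowerT} gives $y\qT x'\prec x''$.
\item \emph{monotone}, applied to the pair $y\qT x'\prec x''$ and $y\prec z$, gives $x\prec y\qT x'\qT y\prec x''\qT z$.
\item Hence $x\in\qLowerSet(x''\qT z)\subseteq A\hat{\qT}B$, since $x''\in A$ and $z\in B$.
\end{itemize}
For $B\hat{\qT}A$, use $x'\qT y\prec x''$ and obtain $x\prec z\qT x''$ in the same way. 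This is exactly the paper's argument.
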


\begin{proof}
  First, we have $A_1\hat{\qT}A_2\in\RD(B,\prec)$, because it is
  the join of elements in $\RD(B,\prec)$, and $B\in\RD(B,\prec)$,
  because it is rounded by (roundB).  Moreover, we have:
  \begin{itemize}
  \item $(\qJ S_1)\hat{\qT}(\qJ S_2)=\qJ\setb{A_1\hat{\qT} A_2}{A_1\in
    S_1\land A_2\in S_2}$ for every $S_1,S_2\subseteq\RD(B,\prec)$,
    because

    $(\qJ S_1)\hat{\qT}(\qJ S_2)= \cup\setb{\qLowerSet(a_1\qT
      a_2)}{\exists A_1\in S_1.a_1\in A_1\land\exists A_2\in
      S_2.a_2\in A_2}=$\\ $\cup\setb{A_1\hat{\qT} A_2}{A_1\in S_1\land
      A_2\in S}=\qJ\setb{A_1\hat{\qT} A_2}{A_1\in S_1\land A_2\in S_2}$.

  \item $(A_1\hat{\qT}A_2)\hat{\qT}A_3=A_1\hat{\qT}(A_2\hat{\qT}A_3)$
    for every $A_1,A_2,A_3\in\RD(B,\prec)$, because

    $(A_1\hat{\qT}A_2)\hat{\qT}A_3=\cup\setb{\qLowerSet(a
      \qT a_3)}{a\in (A_1\hat{\qT} A_2)\land a_3\in A_3}=$
    by definition of $A_1\hat{\qT} A_2$ and (monotone)\\
    $\cup\setb{\qLowerSet((a_1\qT a_2)\qT a_3)}{a_1\in A_1\land a_2\in
      A_2\land a_3\in A_3}=$
    by $\qT$ associative\\
    $\cup\setb{\qLowerSet(a_1\qT(a_2\qT a_3))}{a_1\in A_1\land a_2\in
      A_2\land a_3\in A_3}=A_1\hat{\qT}(A_2\hat{\qT}A_3)$.

  \item $A\hat{\qT}B=A$ for every $A\in\RD(B,\prec)$, because

    $A\hat{\qT}B=\cup\setb{\qLowerSet (a\qT y)}{a\in A\land y\in
      B}\subseteq$ \big[by ($A$ rounded) $\exists a'\in A.a\prec a'$ and
    (lowerT) $a\qT y\prec a'$ \big]\\
    $\cup\setb{\qLowerSet a'}{a'\in A}=A$

    $A=\cup\setb{\qLowerSet a}{a\in A}\subseteq$ \big[by ($A$ rounded)
    $\exists a',a''\in A.\ a\prec a'\prec a''$, (denseT) $\exists y\in
    B.a\prec y\qT a'\qT y\prec a'$, (lowerT) $y\qT a'\prec a''$,
    (roundB) $\exists y'\in B.y\prec y'$ and (monotone) $y\qT a'\qT
    y\prec a''\qT y'$ \big]\\
    $\cup\setb{\qLowerSet (a''\qT y')}{a''\in A\land y'\in B}=A\hat{\qT}B$.

  \item $B\hat{\qT}A=A$ for every $A\in\RD(B,\prec)$ is proved
    similarly. \qedhere
  \end{itemize}
\end{proof}

\begin{example}\label{ex:R+:apb:qT}
  Consider the two abstract bases $(\Q_{>0},\lll)$ and
  $([0,\infty],\lll)$ in Example~\ref{ex:R+:apb}. Addition is an
  associative operation on $\Q_{>0}$ and $[0,\infty]$. Moreover,
  $0\in[0,\infty]$, thus we can ask if it satisfies the properties
  required in Theorem~\ref{thm:apb:pcq}:
  \begin{itemize}
  \item $+:\Q_{>0}^2\to\Q_{>0}$ satisfies all properties, including roundB
  \item $+:[0,\infty]^2\to[0,\infty]$ satisfies all properties,
    except roundB, which fails when $x=0$.
  \end{itemize}
  If $\qT:B^2\to B$ has a unit $\qI$ and $\forall x\in B.x\prec\qI$,
  then roundB is trivially true (take $y=\qI$). For $+:[0,\infty]^2\to[0,\infty]$ the
  unit is $\qI=0$, but $\forall x\in[0,\infty].x\lll\qI$ fails, and the
  biggest rounded lowerset is $\qLowerSet\qI=(0,\infty]\subset[0,\infty]$.
\end{example}

\begin{remark}
  If $(B,\prec)$ is a preorder, then meet in $\RD(B,\prec)$ is given
  by intersection, but when $(B,\prec)$ is not a preorder, even binary
  meet may fail to be given by intersection. For instance, let
  $B\defeq\big(2 \times \Q \big) \cup \setf{\bot}$ and define
  \begin{equation*}
    x \prec y \defiff (x = \bot \land y \neq \bot)\lor (\exists i \in
    2, q,q'\in\Q.\ x = (i,q)\land y = (i,q')\land q < q').
  \end{equation*}
  If we take $a_i \defeq (i,0)$, then $\qLowerSet{a_0} \cap
  \qLowerSet{a_1} = \setf{\bot} \notin \RD(B,\prec)$, while
  $\qLowerSet{a_0} \qm \qLowerSet{a_1}=\emptyset$.
\end{remark}

\subsection{Metrization for Quasi-Uniformities}
\label{sec:met:qu}

Given a base $B$ for a quasi-uniformity $\QU$ on $X$, we define an
abstract prime-base $(B,\prec)$. Hence, we can apply
Theorem~\ref{thm:apb:pcl} to construct a prime-continuous
lattice. Under the additional assumption that $B$ is closed under
composition $\qT$ of binary relations, we can apply
Theorem~\ref{thm:apb:pcq} and construct a prime-continuous affine
quantale $Q_B$, which is the key for defining a metric space
$(X,d_B,Q_B)\in\MS_u$ such that $(X,\QU)=\Phi_u(X,d_B,Q_B)$.

\begin{definition}\label{def-ll-QU}
  Given a base $B$ for a quasi-uniformity $\QU$, the binary relation
  $\prec$ on $\QU$ is defined by
  \begin{equation*}
    R_1\prec R_2 \defiff \exists R\in B.\ R_1\supseteq R\qT R_2\qT R,
  \end{equation*}
  where $\qT$ denotes composition of binary relations on $X$.
\end{definition}

\begin{proposition}
  \label{prop-ll-QU}
  The relation $\prec$ on the quasi-uniformity $\QU$ is independent of
  the choice of base.
\end{proposition}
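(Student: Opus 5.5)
The plan is to show that, for any two bases $B$ and $B'$ of $\QU$, the relations $\prec_B$ and $\prec_{B'}$ coincide. Since both are subsets of $\QU$ (Definition~\ref{def:QUnif_Base_Subbase}), it suffices to compare each of them with the relation $\prec_\QU$ obtained by taking the whole quasi-uniformity as base. That is, we prove
\begin{equation*}
R_1\prec_B R_2\iff R_1\prec_\QU R_2,
\end{equation*}
where $R_1\prec_\QU R_2$ means $\exists R\in\QU.\ R_1\supseteq R\qT R_2\qT R$. Applying this to $B$ and to $B'$ then gives the proposition.

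\textbf{Forward direction.} The direction $\prec_B\subseteq\prec_\QU$ is immediate, since $B\subseteq\QU$: a witness $R\in B$ is also a witness in $\QU$.

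\textbf{Backward direction.} Suppose $R_1\supseteq R\qT R_2\qT R$ with $R\in\QU$. Because $B$ is a base, there is $R'\in B$ with $R'\subseteq R$. Composition of binary relations is monotone in each argument, as the quantale $(\PS(X^2),\subseteq,\qT,\Delta)$ of Proposition~\ref{prop:BR} shows. Hence
\begin{equation*}
R'\qT R_2\qT R'\subseteq R\qT R_2\qT R\subseteq R_1,
\end{equation*}
so $R'$ witnesses $R_1\prec_B R_2$.

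No real obstacle arises; the only points needing care are the monotonicity of $\qT$ and the fact that the defining condition is upward-closed in the witness $R$. Neither the identity property nor the composition property of $\QU$ is needed.
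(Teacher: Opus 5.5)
Your proof is correct and follows essentially the paper's argument. The paper compares two bases $B$ and $B'$ directly, replacing a witness $R\in B$ by some $R'\in B'$ with $R'\subseteq R$ and using monotonicity of $\qT$; you route exactly this step through the canonical base $\QU$, which is a cosmetic difference. One wording slip: the defining condition survives shrinking the witness, so it is downward-closed in $R$ with respect to $\subseteq$ (upward-closed only for $\supseteq$). Your argument does use it correctly.
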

\begin{proof}
  Given two bases $B$ and $B'$ for $\QU$, let $\prec$ and $\prec'$
  denote the relations in Definition~\ref{def-ll-QU} defined using $B$
  and $B'$, respectively.
  We prove that $\prec$ is included in $\prec'$ (the other inclusion
  follows by swapping the two bases).  In fact, for any $R\in B$ there
  exists a $R'\in B'$ such that $R\supseteq R'$. Therefore, by
  monotonicity of relational composition of binary relations,
  $R\qT R_2\qT R\supseteq R'\qT R_2\qT R'$, for any $R_2\in\QU$.
\end{proof}

The following properties of $\prec$ imply that, for any base $B$ for
$\QU$, the pair $(B,\prec)$ is an
abstract prime-base.

\begin{theorem}
  \label{thm:prec_properties}
  The binary relation $\prec$ on $\QU$ (Definition~\ref{def-ll-QU})
  has the following properties:
  \begin{enumerate}
  \item\label{thm:prec:1} $\prec$ is included in $\supseteq$, and the
    opposite inclusion holds when $\Delta\in\QU$.

  \item\label{thm:prec:supset_prec_comp}
    ${\supseteq\circ\prec\circ\supseteq} = {\prec}$.

  \item\label{thm:prec:transitive} $\prec$ is transitive, \ie, $\prec\circ\prec$
    is included in $\prec$.

  \item\label{thm:prec:directed} $\QU$ is $\prec$-directed, \ie,
    $\forall n\in\omega. \ \big( (\forall i\in n.R_i\in\QU)\implies\exists
    R\in\QU.\forall i\in n.R_i\prec R \big )$.

  \item\label{thm:prec:interpolation} $\prec$ satisfies the
    interpolation property, \ie,
    $\forall n\in\omega. \big((\forall i\in n.R_i\prec R')\implies\exists
    R\prec R'.\forall i\in n.R_i\prec R \big)$.

  \item\label{thm:prec:monotone} Composition $\qT$ of relations is
    $\prec$-monotone, \ie, $R_0\prec R'_0\land R_1\prec R'_1\implies
    R_0\qT R_1\prec R'_0\qT R'_1$.
  \end{enumerate}
  Moreover, the restrictions of $\prec$ and $\supseteq$ to a base $B$
  for $\QU$ satisfy properties
  (\ref{thm:prec:1}--\ref{thm:prec:directed}), and if $B$ is closed
  under $\qT$, then properties \ref{thm:prec:interpolation} and
  \ref{thm:prec:monotone} are satisfied as well.
\end{theorem}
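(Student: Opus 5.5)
The plan is to unfold Definition~\ref{def-ll-QU} and use three facts: monotonicity of relational composition, reflexivity of entourages ($\Delta\subseteq R$ for $R\in\QU$), and the composition property. Reflexivity gives $R_2\subseteq R\qT R_2\qT R$, since $R_2=\Delta\qT R_2\qT\Delta$. Hence $R_1\supseteq R\qT R_2\qT R$ implies $R_1\supseteq R_2$, so $\prec$ is included in $\supseteq$. Conversely, if $\Delta\in\QU$, then we may take $R=\Delta$ (Proposition~\ref{prop-ll-QU} makes the choice of base irrelevant, so we may use the base $\QU$). Then $R_1\supseteq R_2$ gives $R_1\supseteq\Delta\qT R_2\qT\Delta$, which yields item~\ref{thm:prec:1}.

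For item~\ref{thm:prec:supset_prec_comp}: if $R_1'\supseteq R_1\supseteq R\qT R_2\qT R$ and $R_2\supseteq R_2'$, then $R_1'\supseteq R\qT R_2'\qT R$ by monotonicity. The reverse inclusion comes from reflexivity of $\supseteq$. For transitivity: from $R_1\supseteq R\qT R_2\qT R$ and $R_2\supseteq R'\qT R_3\qT R'$ we get $R_1\supseteq R\qT R'\qT R_3\qT R'\qT R$. Choosing $R''\in B$ with $R''\subseteq R\cap R'$ gives $R\qT R'\supseteq R''\qT R''\supseteq R''$ and likewise $R'\qT R\supseteq R''$, using reflexivity. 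Therefore $R_1\supseteq R''\qT R_3\qT R''$.

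For directedness (item~\ref{thm:prec:directed}), given $R_i$ for $i\in n$, I use the composition property twice to find $S\in B$ with $S^3\subseteq\bigcap_i R_i$ (this holds even for $n=0$, where the intersection is $X^2$). Then $R_i\supseteq S\qT S\qT S$, so $R_i\prec S$ with $R:=S$.

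Interpolation (item~\ref{thm:prec:interpolation}) is the step I expect to be the main obstacle. Given $R_i\supseteq S_i\qT R'\qT S_i$ with $S_i\in B$, I first pick $S\in B$ below $\bigcap_i S_i$, so that $R_i\supseteq S\qT R'\qT S$ for all $i$. Next I pick $T\in B$ with $T^2\subseteq S$ and set $R:=T\qT R'\qT T$. This gives $R\prec R'$ witnessed by $T$. Also $R_i\supseteq T\qT T\qT R'\qT T\qT T=T\qT R\qT T$, so $R_i\prec R$. The case $n=0$ is handled by the same construction starting from any $S\in B$.

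For the restriction to a base $B$ there are two concerns. The interpolant $R=T\qT R'\qT T$ must lie in $B$, and it does when $B$ is closed under $\qT$. Directedness already produces $S\in B$. For item~\ref{thm:prec:1} restricted to $B$, the converse needs $\Delta\in B$ or a replacement for it. I would note that if $\Delta\in\QU$ then $\Delta$ contains some element of $B$, and that element must equal $\Delta$, so $\Delta\in B$.

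For monotonicity (item~\ref{thm:prec:monotone}), suppose $R_0\supseteq S_0\qT R_0'\qT S_0$ and $R_1\supseteq S_1\qT R_1'\qT S_1$. Then
$R_0\qT R_1\supseteq S_0\qT R_0'\qT S_0\qT S_1\qT R_1'\qT S_1\supseteq S\qT R_0'\qT R_1'\qT S$
for $S\in B$ with $S\subseteq S_0\cap S_1$. Here the middle factor $S_0\qT S_1\supseteq\Delta$ is dropped by reflexivity. This witnesses $R_0\qT R_1\prec R_0'\qT R_1'$, and on $B$ the composites stay in $B$ by closure under $\qT$.
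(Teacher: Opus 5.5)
Your proposal is correct and follows essentially the same argument as the paper's proof for every item: the same use of reflexivity for item 1, a $B$-element witness for directedness, interpolation via $T\qT R'\qT T$ with $T^2\subseteq S$, and monotonicity by dropping $S_0\qT S_1\supseteq\Delta$. The only real difference is transitivity, where you merge the two witnesses into a common refinement, whereas the paper gets it in one line from items 1 and 2 ($R_1\prec R_2$ already gives $R_1\supseteq R_2$, so $R_1\supseteq R_2\prec R_3$ yields $R_1\prec R_3$).
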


\begin{proof}
  We prove the properties for the relations $\supseteq$ and $\prec$
  restricted to a generic base $B$.
  \begin{enumerate}
  \item If $R_1\prec R_2$, \ie, $R_1\supseteq R\qT R_2\qT R$ for some
    $R\in\QU$, then $R\qT R_2\qT R\supseteq R_2$, because $\forall
    R\in\QU.R\supseteq\Delta$, which implies that $R_1\supseteq R_2$.
    If $\Delta\in\QU$, then $R_1\supseteq R_2$
    implies $R_1\prec R_2$, since $R_2=\Delta\qT R_2\qT\Delta$.

  \item We prove that $\supseteq\circ\prec\circ\supseteq$
    is included in $\prec$. In fact, if $R_1\supseteq R_2\supseteq
    R\qT R_3\qT R\land R_3\supseteq R_4$ for some $R\in\QU$, then
    $R_1\supseteq R_2\supseteq R\qT R_3\qT R\supseteq R\qT R_4\qT R$,
    by monotonicity of $\qT$ and transitivity of $\supseteq$, thus
    $R_1\prec R_4$.

\item $\prec\circ\prec$ is included in $\prec$, since
  $\prec\circ\prec$ is included in $\supseteq\circ\prec$, by property
  \ref{thm:prec:1} and monotonicity of $\circ$, and
  $\supseteq\circ\prec$ is included in
  $\supseteq\circ\prec\circ\supseteq$, by $\Delta_\QU$ included in
  $\supseteq$ and monotonicity of $\circ$, thus $\prec\circ\prec$ is
  included in $\prec$, by property \ref{thm:prec:supset_prec_comp} and
  transitivity of inclusion.

\item If $(\forall i\in n.R_i\in\QU)$, then $R_n\defeq\bigcap_{i\in
  n}R_i\in\QU$, with $U_0=X^2$ when $n=0$. Take $S\in\QU$
  such that $R_n\supseteq S^4$ and $R\in B$ such that
  $S^2\supseteq R$, then $\forall i\in n.R_i\supseteq
  R_n\supseteq S^4\supseteq S\qT R\qT S$, thus
  $\forall i\in n.R_i\prec R$, by monotonicity of $\qT$ and
  transitivity of $\supseteq$.

\item If $(\forall i\in n.R_i\prec R')$, which means that $(\forall
  i\in n.\exists S_i\in\QU.R_i\supseteq S_i\qT R'\qT S_i)$, then
  $S_n\defeq\bigcap_{i\in n}S_i\in\QU$ and $(\forall i\in
  n.R_i\supseteq S_n\qT R'\qT S_n)$.  Take $S\in B$ such that
  $S_n\supseteq S^2$, then $R\defeq S\qT R'\qT S\in B$, if the base
  $B$ is closed under $\qT$, therefore $(\forall i\in n.R_i\supseteq
  S_n\qT R'\qT S_n\supseteq S\qT R\qT S)$ and $R\supseteq S\qT R'\qT
  S$, which imply $(\forall i\in n.R_i\prec R\prec R')$.

\item If the base $B$ is closed under $\qT$, then the property that $\qT$ is
  $\prec$-monotone is equivalent to the implication $(\exists
  S_0,S_1\in\QU.R_0\supseteq S_0\qT R'_0\qT S_0\land R_1\supseteq
  S_1\qT R'_1\qT S_1)\implies \exists S\in\QU.R_0\qT R_1\supseteq S\qT
  R'_0\qT R'_1\qT S$.
  Take $S\defeq S_0\cap S_1\in\QU$, then $R_0\qT R_1\supseteq S\qT
  R'_0\qT S^2\qT R'_1\qT S\supseteq S\qT R'_0\qT R'_1\qT S$, since
  $S\supseteq\Delta$. \qedhere
  \end{enumerate}
\end{proof}

\begin{corollary}\label{corr:Q:B}
  If $B$ is a base closed under $\qT$ for a quasi-uniformity $\QU$,
  then $Q_B\defeq(\RD(B,\prec),\hat{\qT},B)$ is an affine
  prime-continuous quantale.
\end{corollary}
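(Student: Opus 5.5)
The corollary follows by combining Theorem~\ref{thm:prec_properties} with Theorems~\ref{thm:apb:pcl} and~\ref{thm:apb:pcq}, applied to the restriction of $\prec$ to $B$ and to the operation $\qT:B^2\to B$ given by relational composition. This operation is well defined and associative because $B$ is closed under $\qT$ and composition of relations is associative. The plan has three steps: show that $(B,\prec)$ is an abstract prime-base, check the four hypotheses of Theorem~\ref{thm:apb:pcq}, and finally check affineness.

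For the first step we need transitivity and density of $\prec$ on $B$. Transitivity is item~\ref{thm:prec:transitive} of Theorem~\ref{thm:prec_properties} restricted to $B$. Density is the case $n=1$ of the interpolation property (item~\ref{thm:prec:interpolation}), which is available on $B$ because $B$ is closed under $\qT$. Theorem~\ref{thm:apb:pcl} then makes $\RD(B,\prec)$ a prime-continuous lattice with joins given by unions.

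For the second step we verify the hypotheses of Theorem~\ref{thm:apb:pcq} one at a time.
\begin{itemize}
\item \emph{roundB}: apply directedness (item~\ref{thm:prec:directed}) with $n=1$ on $B$.
\item \emph{monotone}: this is exactly item~\ref{thm:prec:monotone}.
\item \emph{lowerT}: suppose $x\prec x'$ and $y\in B$. Since $y\supseteq\Delta$, we get $y\qT x\supseteq x$, so $y\qT x\supseteq x\prec x'$. By item~\ref{thm:prec:supset_prec_comp} this gives $y\qT x\prec x'$, and symmetrically $x\qT y\prec x'$.
\item \emph{denseT}: suppose $x\prec x'$, so $x\supseteq S\qT x'\qT S$ for some $S\in B$. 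Choose $S_0\in B$ with $S_0^2\subseteq S$ (composition property together with the base property), and then choose $y\in B$ with $y^2\subseteq S_0$. This gives $x\supseteq S_0\qT(S_0\qT x'\qT S_0)\qT S_0$, hence $x\prec S_0\qT x'\qT S_0$. Since $y\subseteq S_0$, we have $S_0\qT x'\qT S_0\supseteq y\qT x'\qT y$, and item~\ref{thm:prec:supset_prec_comp} yields $x\prec y\qT x'\qT y$. On the other side, $y\qT x'\qT y\supseteq y\qT x'\qT y$ holds witnessed by $y\in B$, so $y\qT x'\qT y\prec x'$.
\end{itemize}
With these four conditions in place, Theorem~\ref{thm:apb:pcq} shows that $\RD(B,\prec)$ is a quantale with tensor $\hat{\qT}$ and unit $B$.

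For the third step, affineness means the unit is the top element. The unit is $B$ itself, which is rounded by roundB and trivially lower, and every rounded lowerset is a subset of $B$, so $B$ is the top of $\RD(B,\prec)$.

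The only step that needs a genuine construction rather than a citation is denseT. The difficulty is producing a single $y$ that simultaneously sits on both sides of $x'$ so that $x\prec y\qT x'\qT y\prec x'$. The plan handles this by halving the witness $S$ twice inside the base, using $S_0$ and then $y$, so that the outer copies of $S_0$ provide the $\prec$-witness on the left while the inner expression is refined down to $y\qT x'\qT y$.
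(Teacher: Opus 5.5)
Your proposal is correct and follows essentially the same route as the paper: transitivity and interpolation (with $n=1$) from Theorem~\ref{thm:prec_properties} make $(B,\prec)$ an abstract prime-base, and roundB, monotone and lowerT are discharged exactly as in the paper via items~\ref{thm:prec:directed}, \ref{thm:prec:monotone} and~\ref{thm:prec:supset_prec_comp}. The only difference is in denseT: the paper obtains the witness by interpolating $x\prec R_1\prec x'$ and unfolding $R_1\prec x'$, while you halve the witness $S$ directly. Your second halving is unnecessary, since taking $y=S_0$ already gives $x\prec S_0\qT x'\qT S_0\prec x'$.
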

\begin{proof}
  It suffices to check that $(B,\prec)$ is an abstract-prime base and
  $\qT:B^2\to B$ satisfies the properties required in
  Theorem~\ref{thm:apb:pcq}.
  Theorem~\ref{thm:prec_properties} states that $\prec$ is transitive
  and satisfies the interpolation property, which implies density.
  Thus, $(B,\prec)$ is an abstract-prime base.  Moreover, $\qT$
  satisfies the properties required in Theorem~\ref{thm:apb:pcq} for
  the following reasons:
    \begin{itemize}
  \item \emph{roundB:} follows from Theorem~\ref{thm:prec_properties}
    item~\ref{thm:prec:directed}.

  \item \emph{monotone:} corresponds to Theorem~\ref{thm:prec_properties}
    item~\ref{thm:prec:monotone}.

  \item \emph{denseT:} follows from density, the definition of $\prec$ and
    Theorem~\ref{thm:prec_properties} item~\ref{thm:prec:supset_prec_comp}:

    $R_0\prec R_2\implies$ by density $\exists R_1\in B$:\\
    $R_0\prec R_1\prec R_2\implies$ by definition or $\prec$ for some $S\in B$:\\
    $R_0\prec R_1\supseteq S\qT R_2\qT S\implies$ by Theorem~\ref{thm:prec_properties} item~\ref{thm:prec:supset_prec_comp}:\\
    $R_0\prec S\qT R_2\qT S$.

  \item \emph{lowerT:} follows from the definition of $\prec$ and
    Theorem~\ref{thm:prec_properties} item~\ref{thm:prec:supset_prec_comp}:

   $R_0\prec R_1\implies$ for every $S\in B$ (which implies $S\supseteq\Delta$)\\
   $S\qT R_0,R_0\qT S\supseteq R_0\prec R_1\implies$ by Theorem~\ref{thm:prec_properties} item~\ref{thm:prec:supset_prec_comp}\\
   $S\qT R_0,R_0\qT S\prec R_1$. \qedhere
  \end{itemize}
\end{proof}

\begin{theorem}\label{thm:met:qu}
  If $B$ is a base closed under $\qT$ for a quasi-uniformity $\QU$ on
  $X$, let $Q_B$ be the affine prime-continuous quantale defined in
  Corollary~\ref{corr:Q:B} and let $d_B:X^2\to Q_B$ be
  \begin{equation*}
    d_B(x,y)\defeq\bigcup\setb{\qLowerSet R}{(x,y)\in R\in B},
  \text{ where }\qLowerSet R\defeq\setb{R'\in B}{R'\prec R}\in\RD(B,\prec).
  \end{equation*}
  Then, $(X,d_B,Q_B)\in\MS_u$ and $(X,\QU)=\Phi_u(X,d_B,Q_B)$.
\end{theorem}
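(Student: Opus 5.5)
The plan is to verify the two metric axioms for $d_B$ directly in $Q_B$, and then compare the base $B^u_{d_B}$ of Definition~\ref{def:Phi} with $B$ by mutual refinement. I first unfold $d_B(x,y)$ into an elementwise description: $d_B(x,y)=\setb{R'\in B}{\exists R\in B.\ (x,y)\in R\land R'\prec R}$. It is a rounded lowerset, being a union of the rounded lowersets $\qLowerSet R$ (Theorem~\ref{thm:apb:pcl}). By item~\ref{thm:prec:1} of Theorem~\ref{thm:prec_properties}, $R'\in d_B(x,y)$ implies $R'\supseteq R\ni(x,y)$, hence $(x,y)\in R'$. This fact, \emph{every element of $d_B(x,y)$ contains $(x,y)$}, is used throughout. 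By Corollary~\ref{corr:Q:B}, $Q_B$ is an affine prime-continuous quantale, so it is in particular a continuous quantale.

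\emph{Metric axioms.} For the unit, the unit of $Q_B$ is $B$. Given $R'\in B$, item~\ref{thm:prec:directed} of Theorem~\ref{thm:prec_properties} yields $R\in B$ with $R'\prec R$. Since $R\supseteq\Delta$, we have $(x,x)\in R$, so $R'\in d_B(x,x)$; hence $B\subseteq d_B(x,x)$. For the triangle inequality, $d_B(x,y)\hat{\qT}d_B(y,z)$ is the union of the sets $\qLowerSet(a_1\qT a_2)$ with $a_1\in d_B(x,y)$ and $a_2\in d_B(y,z)$. By the fact above, $(x,y)\in a_1$ and $(y,z)\in a_2$, so $(x,z)\in a_1\qT a_2$. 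Moreover $a_1\qT a_2\in B$ because $B$ is closed under $\qT$. Therefore $\qLowerSet(a_1\qT a_2)\subseteq d_B(x,z)$.

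\emph{Identifying $\Phi_u$.} I choose the base of $Q_B$ to be $P_\qj$, the finite joins of the prime-base $\setb{\qLowerSet S}{S\in B}$, which is a base by Proposition~\ref{prop:base:pbase}. First I check that the elements $\qLowerSet S$ with $S\in B$ are cofinal in $\qwb_{P_\qj}B$. Indeed, a finite join $\bigcup_i\qLowerSet S_i$ lies below $\qLowerSet T$ whenever all $S_i\prec T$, and such a $T$ exists by item~\ref{thm:prec:directed}. Also, $S\prec T$ with $T\in B$ gives $\qLowerSet S\lll B$, hence $\qLowerSet S\ll B$. Consequently $\QU_{d_B}$ has base $\setb{R_{\qLowerSet S}}{S\in B}$, where $(x,y)\in R_{\qLowerSet S}\iff\qLowerSet S\ll d_B(x,y)$. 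It then suffices to show two refinements.
\begin{enumerate}
\item Every $R\in\QU$ contains some $R_{\qLowerSet S}$. Pick $S'\in B$ with $S'\subseteq R$, and then $S\in B$ with $S'\prec S$. If $(x,y)\in R_{\qLowerSet S}$, then $\qLowerSet S\subseteq d_B(x,y)$, so $S'\in d_B(x,y)$. By the fact above, $(x,y)\in S'\subseteq R$.
\item Every $R_{\qLowerSet S}$ contains some member of $B$. Given $S\in B$, use directedness to get $T\in B$ with $S\prec T$, and interpolation (item~\ref{thm:prec:interpolation}, valid since $B$ is closed under $\qT$) to get $a\in B$ with $S\prec a\prec T$. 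If $(x,y)\in T$, then $a\in\qLowerSet T\subseteq d_B(x,y)$ and $\qLowerSet S\subseteq\qLowerSet a$ by transitivity. By the characterization of $\lll$ in Theorem~\ref{thm:apb:pcl}, this gives $\qLowerSet S\lll d_B(x,y)$, hence $\qLowerSet S\ll d_B(x,y)$. Thus $T\subseteq R_{\qLowerSet S}$.
\end{enumerate}

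The main obstacle I expect is the bookkeeping in this last part. One must make sure that the base of $Q_B$ chosen in Definition~\ref{def:Phi} legitimately reduces to the elements $\qLowerSet S$, which relies on Proposition~\ref{prop:base:pbase} together with the independence of $\Phi_u$ from the choice of base. One must also handle $\ll$ versus $\lll$ correctly: it is safest to pass through $\lll$ and the explicit description of $\lll$ in Theorem~\ref{thm:apb:pcl}, using only the implication $\lll\implies\ll$.
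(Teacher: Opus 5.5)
Your proposal is correct and follows essentially the same route as the paper: the same metric-axiom checks (you do the triangle inequality elementwise rather than via distributivity), the same choice of $P_\qj$ as base of $Q_B$ with reduction to the entourages $R_{\qLowerSet S}$ (you use a cofinality argument where the paper uses $R_\bot=X^2$ and $R_{\delta_1\qj\delta_2}=R_{\delta_1}\cap R_{\delta_2}$), and the same two refinements with the same witnesses. The interpolation in your second refinement is unnecessary: $S\prec T$ and $(x,y)\in T$ already give $S\in\qLowerSet T\subseteq d_B(x,y)$, so you can take $a=S$, which is what the paper does.
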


\begin{proof}
  First, $d_B(x,y)\in Q_B$, because it is the join of a subset of the
  prime-continuous lattice $\RD(B,\prec)$.  Second, we prove that
  $d_B$ is a metric, namely
  \begin{itemize}
  \item $d_B(x,x)=B$, since $\forall R\in B.(x,x)\in\Delta\subseteq R$
    and $B$ is rounded, \ie, $B=\bigcup\setb{\qLowerSet R}{R\in B}$.
  \item $d_B(x,y)\hat{\qT}d_B(y,z)\subseteq d_B(x,z)$, since

    $d_B(x,y)\hat{\qT}d_B(y,z)=$ by definition of $d_B$\\
    $(\qJ\setb{\qLowerSet R_0}{(x,y)\in R_0\in B})\hat{\qT}(\qJ\setb{\qLowerSet R_1}{(y,z)\in R_1\in B})=$ by distributivity\\
    $(\qJ\setb{(\qLowerSet R_0)\hat{\qT}(\qLowerSet R_1)}{(x,y)\in
      R_0\in B\land (y,z)\in R_1\in B}) \subseteq$ by definition of $\hat{\qT}$\\
    $(\qJ\setb{(\qLowerSet(R_0\qT R_1)}{(x,y)\in R_0\in B\land (y,z)\in R_1\in B})\subseteq$ by definition of $\qT$\\
    $(\qJ\setb{\qLowerSet R}{(x,z)\in R\in B}$.
  \end{itemize}
  
  Finally, we prove that $(X,\QU)=\Phi_u(X,d_B,Q_B)$.  Consider the
  prime-base $P\defeq\setb{\qLowerSet R}{R\in B}$ for $Q_B$ (see
  Theorem~\ref{thm:apb:pcl}). Thus, $P_\qj$ is a base for $Q_B$.
  We prove that $\forall R\in\QU.\exists\delta\in P.R_\delta\subseteq
  R$ and $\forall\delta\in P_\qj.R_\delta\in\QU$.  Since,
  $R_\bot=X^2$ and $R_{\delta_1\qj\delta_1}=R_{\delta_1}\cap
  R_{\delta_2}$ in the second proof obligation $P_\qj$ can be
  replaced by $P$.
  \begin{itemize}
  \item For every $R\in\QU$, there exists $R_0\in B$ such that
    $R\supseteq R_0$ (by definition of base for $\QU$) and also
    $S\in B$ such that $R_0\prec S$ (by
    Theorem~\ref{thm:prec_properties} item~\ref{thm:prec:directed}).
    We prove that $R_\delta\subseteq R$ when
    $\delta\defeq\qLowerSet S\in P$, by the following chains of
    implications:

    $(x,y)\in R_\delta\iff\delta\ll d_B(x,y)\implies
    \delta\subseteq d_B(x,y)\implies$ since $R_0\in\delta$\\
    $R_0\in d_B(x,y)\implies (x,y)\in R_0\implies (x,y)\in R$.

  \item For every $\delta\in P$, there exists $R\in B\subseteq\QU$
    such that $\delta=\qLowerSet R$ (by definition of $P$) and also
    $S\in B$ such that $R\prec S$ (by
    Theorem~\ref{thm:prec_properties} item~\ref{thm:prec:directed}).
    We prove that $S\subseteq R_\delta$ (thus $R_\delta\in\QU$),
    by the following chains of implications

    $(x,y)\in S\implies\qLowerSet S\subseteq d_B(x,y)\implies$ by $R\prec S$\\
    $R\in d_B(x,y)\implies$ by definition of $\ll$\\
    $\delta\ll d_B(x,y)\iff (x,y)\in R_\delta$. \qedhere
  \end{itemize}
\end{proof}

\subsection{Metrization for Topologies}
\label{sec:met:top}

We prove that the full\&faithful functor $\Phi_c:\MS_c\rTo\Top$ is
surjective on objects, and therefore it is an equivalence.
One can follow two approaches:
\begin{itemize}
\item An indirect approach, exploiting the fact that the functor
  $T:\QUnif\rTo\Top$ is surjective on objects (see
  Remark~\ref{rmk:admit}).  Therefore, given $(X,\tau)\in\Top$, one
  can choose $(X,\QU)\in\QUnif$ such that $T(X,\QU)=(X,\tau)$, and any
  metrization $(X,d,Q)$ for $(X,\QU)$, \eg, the one obtained by the
  construction in Section~\ref{sec:met:qu}, will be also a metrization
  for $(X,\tau)$.

\item A direct approach, that given $(X,\tau)\in\Top$ defines
  $(X,d,Q)\in\MS_c$ such that $\Phi_c(X,d,Q)=(X,\tau)$. More
  precisely, we start from a sub-base $S$ for $\tau$ and take as $Q$
  the prime-algebraic locale $(\PS(S),\subseteq)$.  Note that
  $(\PS(S),\subseteq)$ is an instance of the prime-continuous lattice
  $\RD(B,\prec)$ in Definition~\ref{def:apb} by taking as abstract
  prime-base $(B,\prec)$ the flat poset $(S,=)$.

\end{itemize}

\begin{theorem}\label{thm:met:top}
  If $S$ is a sub-base for a topology $\tau$ on $X$, let $Q_S$ be the
  continuous locale $\RD(S,=) = (\PS(S),\subseteq)$ and $d_S:X^2\to Q_S$ be
  \begin{equation*}
    d_S(x,y)\defeq\setb{O\in S}{x\in O\implies y\in O}.
  \end{equation*}
  Then, $(X,d_S,Q_S)\in\MS_c$ and $(X,\tau)=\Phi_c(X,d_S,Q_S)$.
\end{theorem}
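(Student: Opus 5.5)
First I will verify that $Q_S=(\PS(S),\subseteq,\cap,S)$ is a continuous quantale, as Definition~\ref{def:Quantale_valued_metric_spaces} requires. By Theorem~\ref{thm:apb:pcl} applied to the preorder $(S,=)$, it is prime-algebraic with $\qLowerSet O=\setf{O}$. Hence $A'\lll A$ iff $A'\subseteq\setf{O}$ for some $O\in A$, and $A'\ll A$ iff $A'$ is a finite subset of $A$; this follows since the $\lll$-approximants are the singletons and $\ll$ is generated by their finite joins (Proposition~\ref{prop:base:pbase}). Being a complete Boolean algebra, $Q_S$ is a locale, so $\cap$ distributes over arbitrary unions, and the unit is the top element $S$.

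Next I check the metric axioms. We have $d_S(x,x)=S$, since $x\in O\implies x\in O$ holds for every $O\in S$. For transitivity, take $O\in d_S(x,y)\cap d_S(y,z)$: then $x\in O\implies y\in O\implies z\in O$, so $O\in d_S(x,z)$. Thus $(X,d_S,Q_S)\in\MS_c$.

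Then I compute the base $B^c_{d_S}$ of $\tau_{d_S}$ from Definition~\ref{def:Phi}, choosing the base of $Q_S$ to consist of all finite subsets. The elements $\delta\ll\qI=S$ are exactly the finite $\delta\subseteq_f S$, and $\delta\ll d_S(x,y)$ iff $\delta\subseteq d_S(x,y)$. Therefore
\begin{equation*}
B(x,\delta)=\setb{y\in X}{\forall O\in\delta.\ x\in O\implies y\in O}
=\bigcap\setb{O}{O\in\delta,\ x\in O}\cap\bigcap\setb{X}{O\in\delta,\ x\notin O},
\end{equation*}
which is a finite intersection of sub-basic opens containing $x$. This gives $\tau_{d_S}\subseteq\tau$, since every basic ball is open in $\tau$; this uses that $B^c_{d_S}$ is a base by the cited lemma, or alternatively the characterization $O\in\tau_d\iff\forall x\in O.\exists\delta.B(x,\delta)\subseteq O$ from the proof of Proposition~\ref{prop:T-Phiu-Phic}. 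For the converse inclusion, let $x\in O'\in\tau$. Since finite intersections of elements of $S$ form a base, there is a finite $\delta\subseteq S$ with $x\in\bigcap\delta\subseteq O'$. Every element of $\delta$ contains $x$, so $B(x,\delta)=\bigcap\delta\subseteq O'$, and hence $O'\in\tau_{d_S}$.

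The main obstacle is pinning down $\ll$ in $\PS(S)$ correctly, namely that $\delta\ll A$ iff $\delta$ is a finite subset of $A$, so that the $\delta\ll\qI$ are exactly the finite subsets. The rest is bookkeeping: the nonempty case $x\notin O$ contributes the trivial constraint $X$, and the case $\delta=\emptyset$ gives $B(x,\emptyset)=X$.
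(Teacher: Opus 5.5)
Your proposal is correct and follows essentially the same route as the paper: identify the elements way-below the unit $S$ as the finite subsets $\PS_f(S)$, check $d_S(x,x)=S$ and $d_S(x,y)\cap d_S(y,z)\subseteq d_S(x,z)$, and use $B(x,F)=\bigcap_{O\in F,\,x\in O}O$ to obtain both inclusions between $\tau$ and $\tau_{d_S}$. One small remark: you do not need $B^c_{d_S}$ to be a base to get $\tau_{d_S}\subseteq\tau$, since any topology generated by sets that lie in $\tau$ is automatically contained in $\tau$.
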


\begin{proof}
  First, the locale $Q_S\defeq\RD(S,=)=(\PS(S),\subseteq)$ is
  prime-algebraic, \ie, $\setb{q\in Q_S}{q\lll q}=\setb{\setf{O}}{O\in
    S}$ is the smallest prime-base for $Q_S$, and the set of compact
  elements $\setb{q\in Q_S}{q\ll q}=\PS_f(S)=\qwb S$ is the smallest
  base for $Q_S$.
  Second, $d_S$ is a metric, namely:
  \begin{itemize}
  \item $d_S(x,x)=S$, since $\forall O\in S.x\in O\implies x\in O$.

  \item $d_S(x,y)\cap d_S(y,z)\subseteq d_S(x,z)$, since:

    $d_S(x,y)\cap d_S(y,z)=$ by definition of $d_S$\\
    $\setb{O\in S}{(x\in O\implies y\in O)\land(y\in O\implies z\in O)}
    \subseteq$\\
    $\setb{O\in S}{x\in O\implies z\in O}=d_S(x,z)$.
  \end{itemize}
  Finally, we prove $\tau=\tau_{d_S}$, \ie, $\forall x\in X.\forall
  F\in\qwb S.B(x,F)\in\tau$ and $\forall O\in\tau.\forall x\in
  O.\exists F\in\qwb S.B(x,F)\subseteq O$.
  \begin{itemize}
  \item If $x\in X$ and $F\in\qwb S=\PS_f(S)$, then we have
  $$B(x,F)=\setb{y\in X}{F\subseteq d_S(x,y)}=
  \setb{y\in X}{\forall O\in F.x\in O\implies y\in O}=
  \bigcap_{O\in F, x\in O} O$$.
  Thus, $B(x,F)\in\tau$, because it is the intersection of finitely many
  elements in the sub-base $S$ of $\tau$.  

\item If $x\in O\in\tau$, then (by $S$ being a sub-base) there exists
  $F\subseteq_f S$ such that $x\in\cap F\subseteq O$, thus
  $B(x,F)=\cap F\subseteq O$. \qedhere
  \end{itemize}
\end{proof}

The indirect approach can produce qualitatively different quantales, as
shown by the following example.
\begin{example}
  Let $\ell^\infty$ be the set of bounded sequences of real numbers,
  \ie, sequences $(x_i|i\in\omega)$ such that
  $\sup_{i\in\omega}|x_i|<\infty$, then $d_\infty(x,y)\defeq
  \sup_{i\in\omega} |x_i-y_i|$ is a $\RQ_+$-metric, \ie,
  $(\ell^\infty,d_\infty,\RQ_+)\in\MS_u$.
  If $\tau_\infty$ is the topology on $\ell^\infty$ induced by
  $d_\infty$ and $\QU_\infty$ is the quasi-uniformity on
  $\ell^\infty$ induced by $d_\infty$, then
  \begin{itemize}
  \item $\tau_\infty$ does not have a countable sub-base. Hence, for
    any choice of sub-base $S$, the construction in
    Theorem~\ref{thm:met:top} will produce a prime-algebraic locale
    $Q_S$ with a cardinality larger than that of $\RQ_+$ and no
    countable (prime-)base.
    
\item $\QU_\infty$ on the other hand has a countable base, \eg,
  \begin{equation*}
    B=\setb{R_q}{q\in\Q_{>0}}\mbox{ where }
    R_q\defeq\setb{(x,y)\in\ell_\infty \times \ell_\infty}{d_\infty(x,y)<q}
  \end{equation*}
    closed under binary composition of relations, namely $R_p\qT
    R_q=R_{p+q}$.  By applying the construction in
    Theorem~\ref{thm:met:qu} with this particular base we get a
    quantale isomorphic to $\RQ_+$.
  \end{itemize}
\end{example}

Metrization results for topological spaces---using suitable notions of
a generalized metric---were first proved
in~\parencite{Kopperman:All_topologies_metric:1988} and then refined
by others, most notably,
Flagg~\parencite{Flagg:Quantales_continuity_spaces:1997}. In~\parencite[Section
4.2]{Dagnino_Farjudian_Moggi:Robust_Topology:arXiv:2025}, we show that
Flagg's result, where open balls are defined using the totally-below
relation, continues to hold when open balls are defined using the
way-below relation. However, the continuous quantales (or related
notions) used in these proofs are very big. To be more precise, the
cardinality of a base for such continuous quantales is usually as big
as the cardinality of the topology $\tau$. The construction of
Theorem~\ref{thm:met:top} results in a more manageable quantale which
has a countable (prime-)base when $S$ is countable, which is possible
when $\tau$ is second-countable.

\section{Powerset-Like Monads}
\label{sec:Powerset-like-Monads}

In the previous section, we proved that the functor
$\Phi_u:\MS_u\rTo\QUnif$ is an equivalence. Therefore, any monad on
one of the categories $\MS_u$ or $\QUnif$ induces a monad on the
other.  The following diagram recalls some of the relations depicted
in Diagrams~\eqref{diag:forget2}~and~\eqref{diag:Met_QU_Top}:
\begin{equation}
  \label{diag:Metu_QU_Po_Set}
  \begin{tikzcd}[row sep = large, column sep = huge]
    \MS_u \arrow[r, rightarrow, "\Phi_u"] &
        \QUnif \arrow[r, "T"'] \arrow[rd, "U"'] &
    \Top \arrow[l,bend right,dashed,"\bot","L_T"'] \arrow[d, "U"] \arrow[r, "P"']&
    \Po \arrow[l,bend right,dashed,"\bot","L_P"'] \arrow[ld, "U"] \\
    & & \Set
  \end{tikzcd}
\end{equation}
In this section, we define three monads on $\QUnif$ that are
\emph{liftings} of the powerset monad on $\Set$ along the functor
$\QUnif\rTo\Set$.
We adopt a presentation of monads in terms of Kleisli
triple (see~\parencite{Manes1976,Moggi:notions_monads:1991}).

\begin{definition}\label{def:monad}
  A monad on a category $\A$ is a triple $\hat{M}=(M,\eta,-^*)$,
  where:
 \begin{itemize}
 \item $M$ is a function on the objects of $\A$,
 \item $\eta$ is a family of arrows $\eta_X:\A(X,MX)$ with $X\in\A$,
 \item $-^*$ is a family of maps $\A(X,MY)\to\A(MX,MY)$ with
   $X,Y\in\A$,
 \end{itemize}
 satisfying the following equations:
 \begin{equation}\label{eq:monad}
   f^*\circ\eta_X=f\quad,\quad
   \eta_X^*=\id_{MX}\quad,\quad
   g^*\circ f^*=(g^*\circ f)^*.
 \end{equation}
\end{definition}

We recall the bijective correspondence between monads $(M,\eta,\mu)$ on $\A$,
where $M$ is an endofunctor on $\A$ and $\eta$ and $\mu$ are natural
transformations, and Kleisli triples $(M,\eta,-^*)$:
\begin{itemize}
\item if $f : \A(X,Y)$, then $Mf\defeq(\eta_Y\circ f)^*:\A(MX,MY)$
\item $\mu_X\defeq\id_{MX}^*:\A(M^2X,MX)$, and
\item if $f:\A(X,MY)$, then $f^*=\mu_Y\circ Mf:\A(MX,MY)$.
\end{itemize}

For our purposes, we use a strict notion of monad map, namely a
functor between the underlying categories, which \emph{commutes} with
the monad structures.

\begin{definition}\label{def:monad:lift}
  A monad $\hat{M}$ on $\A$ is a \textbf{lifting} of the monad
  $\hat{M'}$ on $\A'$ along the functor $U:\A\rTo\A'$ $\defiff$
  \begin{equation}
    \label{eq:monad:lift}
    U(MX)=M'(UX)\quad,\quad
    U(\eta_X)=\eta'_{UX}\quad,\quad
    U(f^*)=(Uf)^{*'}.
  \end{equation}
\end{definition}
  
\begin{remark}
  If $U:\A\rTo\A'$ is faithful, then there can be several lifting of
  $\hat{M'}$ along $U$.  However, if $\hat{M}_1$ and $\hat{M}_2$ are
  two liftings of $\hat{M}'$ such that $M_1=M_2$, then the other
  components of the triples $\hat{M}_1$ and $\hat{M}_2$ are also
  equal.
  If $\A$ is a sub-category of $\A'$, then there is at most one
  lifting $\hat{M}$ of $\hat{M'}$ along the inclusion $\A\rInto\A'$.
  In this case, it is more appropriate to call $\hat{M}$ the
  \emph{restriction} of $\hat{M'}$ to $\A$.
\end{remark}

We recall the definition of the powerset monad $\hat{\PM}$ on $\Set$.
\begin{definition}
  \label{def:PM}
  The powerset monad $\hat{\PM}$ on $\Set$ is given by the triple
  $(\PM,\eta,-^*)$, where:
  \begin{itemize}
  \item $\PM(X)$ is the set of subsets of $X$,
  \item $\eta_X:\Set(X,\PM(X))$ is $\eta_X(x)=\{x\}$,
  \item if $f:\Set(X,\PM(X'))$, then $f^*:\Set(\PM(X),\PM(X'))$ is
    $f^*(A)=\bigcup\setf{f(x) \mid x\in A}$.
  \end{itemize}
\end{definition}

We now define three liftings of the powerset monad on $\Set$ along the
forgetful functor $U:\QUnif\rTo\Set$.
\begin{definition}\label{def:PM:QU}
Given a base $B$ for a quasi-uniformity $\QU$ on $X$, we define three
quasi-uniformities on $\PM(X)$:
\begin{itemize}
\item the Hausdorff-Smyth quasi-uniformity $\QU_S$ is generated by the
  base $B_S\defeq\setb{R_S}{R\in B}$, where
  \begin{equation*}
    R_S\defeq\setb{(A_1,A_2)\in\PM(X)^2}{ A_2 \subseteq R(A_1)}=
    \setb{(A_1,A_2)\in\PM(X)^2}{\forall x_2\in A_2.\exists
    x_1\in A_1.(x_1,x_2)\in R}.
  \end{equation*}
\item the Hausdorff-Hoare quasi-uniformity $\QU_H$ is generated by the
  base $B_H\defeq\setb{R_H}{R\in B}$, where
  \begin{equation*}
    R_H\defeq((R^o)_S)^o=\setb{(A_1,A_2)\in\PM(X)^2}{A_1 \subseteq R^o(A_2)}.
  \end{equation*}
\item the Hausdorff-Plotkin quasi-uniformity $\QU_P$ is generated by the base
  $B_P\defeq\setb{R_P}{R\in B}$, where
  \begin{equation*}
    R_P\defeq R_H\cap R_S=
    \setb{(A_1,A_2)\in\PM(X)^2}{A_2\subseteq R(A_1)\land A_1 \subseteq R^o(A_2)}.
  \end{equation*}
\end{itemize}
For each $\alpha\in\setf{H,S,P}$, we write $\PM_\alpha$ for the map on
quasi-uniform spaces such that
$\PM_\alpha(X,\QU)=(\PM(X),\QU_\alpha)$.
\end{definition}

\begin{proposition}\label{prop:PM:QU}
  For each $\alpha\in\setf{H,S,P}$, the quasi-uniform space
  $\PM_\alpha(X,\QU)$ is well-defined.
\end{proposition}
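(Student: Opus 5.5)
The argument has two parts: each $B_\alpha$ satisfies the intrinsic characterization of bases in Proposition~\ref{prop:qu:base}, and the generated quasi-uniformity does not depend on the choice of the base $B$ for $\QU$. The main tool is the lax-monoidal machinery of Section~\ref{sec:BR}. By Proposition~\ref{prop:BR:liftS}, the map $-_S:\PS(X^2)\to\PS(\PM(X)^2)$ is lax-monoidal. By item 2 of Proposition~\ref{prop:BR:main}, so is $-_H=(-_S)^o$, i.e.\ $R\mapsto((R^o)_S)^o$. By item 1 of the same proposition (binary intersection), so is $-_P$, $R\mapsto R_H\cap R_S$. All three maps are in particular monotone.

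For the base conditions, fix $\alpha\in\setf{H,S,P}$ and write $F=-_\alpha$. The set $B_\alpha$ is non-empty because $B$ is.

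\emph{Identity.} For $R\in B$ we have $\Delta_X\subseteq R$, so by Proposition~\ref{prop:lax} (case $n=0$) we get $\Delta_{\PM(X)}\subseteq F(\Delta_X)\subseteq F(R)$.

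\emph{Filteredness.} Given $R_0,R_1\in B$, pick $R\in B$ with $R\subseteq R_0\cap R_1$. Monotonicity gives $F(R)\subseteq F(R_0)\cap F(R_1)$.

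\emph{Composition.} Given $R\in B$, pick $R_0\in B$ with $R_0^2\subseteq R$. Lax-monoidality and monotonicity give $F(R_0)^2\subseteq F(R_0^2)\subseteq F(R)$.

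Hence $B_\alpha$ is a base for a quasi-uniformity on $\PM(X)$.

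For independence of the base, let $B$ and $B'$ be two bases for $\QU$. For every $R\in B$ there is $R'\in B'$ with $R'\subseteq R$, and monotonicity of $F$ gives $F(R')\subseteq F(R)$. By symmetry, $B_\alpha$ and $B'_\alpha$ generate the same upperset, so $\QU_\alpha$ depends only on $\QU$. Equivalently, one can take $B=\QU$ itself, which makes the definition of $\PM_\alpha(X,\QU)$ canonical.

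I expect no real obstacle. The only point needing care is checking that $R_P$ is genuinely the image of $R$ under a lax-monoidal map, namely the intersection of $-_S$ and $(-_S)^o$. This is needed so that Proposition~\ref{prop:BR:main} applies and we need not verify the composition property for $R_P$ by hand. It amounts to unfolding $((R^o)_S)^o=\setb{(A_1,A_2)}{A_1\subseteq R^o(A_2)}$, which matches Definition~\ref{def:PM:QU}.
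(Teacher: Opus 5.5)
Your proposal is correct and follows essentially the same route as the paper. Both arguments note that $-_S$, $-_H=(-_S)^o$ and $-_P=-_H\cap -_S$ are lax-monoidal by Propositions~\ref{prop:BR:liftS} and~\ref{prop:BR:main}, and that a lax-monoidal map sends a base for $\QU$ (in the sense of Proposition~\ref{prop:qu:base}) to a base for a quasi-uniformity on $\PM(X)$ depending only on $\QU$; you spell out the three base conditions and the monotonicity argument for base-independence, which the paper leaves implicit.
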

\begin{proof}
  First, observe that, by Proposition~\ref{prop:lax}, every
  lax-monoidal map $F:\PS(X^2)\to\PS(\PS(X)^2)$ maps every base $B$
  (\ie, satisfying the properties required in
  Proposition~\ref{prop:qu:base}) for a quasi-uniformity $\QU$ on $X$
  to a base $F(B)\defeq\setb{F(R)}{R\in B}$ for the same
  quasi-uniformity on $\PS(X)$, {\ie}, if $B$ and $B'$ are bases for
  the same quasi-uniformity on $X$, then $F(B)$ and $F(B')$ are bases
  for the same quasi-uniformity on $\PS(X)$.
  Since, by Proposition~\ref{prop:BR:liftS}~and~\ref{prop:BR:main},
  each $-_\alpha:\PS(X^2)\to\PS(\PS(X)^2)$ is lax-monoidal, we
  conclude that each $B_\alpha$ is a base for a quasi-uniformity
  $\QU_\alpha$ on $\PS(X)$.
\end{proof}

\begin{remark}
  The quasi-uniformities of Definition~\ref{def:PM:QU} appears
  in the literature with a different terminologies. For instance,
  in~\parencite{RodriguezLopez_Romaguera:Vietoris_Hausdorff:2002}:
\begin{itemize}
\item The Hausdorff-Smyth quasi-uniformity $\QU_S$ is called the lower
  Hausdorff quasi-uniformity.

\item The Hausdorff-Hoare quasi-uniformity $\QU_H$ is called the upper
  Hausdorff quasi-uniformity.

\item The Hausdorff-Plotkin quasi-uniformity $\QU_P$ is called the
  Hausdorff (or Bourbaki) quasi-uniformity.
\end{itemize}
\end{remark}

\begin{lemma}
  \label{lem:SH:qu}
  $(\QU_H)^o=(\QU^o)_S$ and $\QU_P=\QU_S\qj\QU_H$, \ie, the join in
  $\QUnif_{\PS(X)}$ of $\QU_S$ and $\QU_H$.
\end{lemma}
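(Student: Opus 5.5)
The lemma has two parts, and I would prove both at the level of bases. Two quasi-uniformities coincide as soon as their bases generate the same upperset.

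For the first claim, $(\QU_H)^o=(\QU^o)_S$, start from a base $B$ for $\QU$. By Proposition~\ref{prop:qu}, $\QU^o$ has base $B^o$. So $(\QU^o)_S$ is generated by $\setb{(R^o)_S}{R\in B}$; this uses Proposition~\ref{prop:PM:QU}, which says the lifted quasi-uniformity does not depend on the chosen base. Again by Proposition~\ref{prop:qu}, $(\QU_H)^o$ has base $\setb{(R_H)^o}{R\in B}$. By definition $R_H=((R^o)_S)^o$, and $-^o$ is an involution on $\PS(\PS(X)^2)$ (Proposition~\ref{prop:BR}). Hence $(R_H)^o=(R^o)_S$ for each $R$, the two bases coincide elementwise, and so do the generated quasi-uniformities. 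This step is pure unfolding of definitions.

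For the second claim, $\QU_P=\QU_S\qj\QU_H$, recall that in $\QUnif_{\PS(X)}$ the order is reverse inclusion. The join is therefore the finest quasi-uniformity contained in both, namely $\QU_S\cap\QU_H$ as sets. I would first check that $\QU_S\cap\QU_H$ is itself a quasi-uniformity. Filters are closed under intersection, and the identity property is inherited. For composition, take $R\in\QU_S\cap\QU_H$; there are $R_1\in\QU_S$ and $R_2\in\QU_H$ with $R_1^2\subseteq R$ and $R_2^2\subseteq R$. Then $R_1\cup R_2$ lies in both filters, but $(R_1\cup R_2)^2$ need not be contained in $R$. This is the delicate point. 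I expect to avoid it altogether by working with bases instead, because then the intersection structure is given directly by the base elements.

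The concrete plan for the second claim is as follows. Suppose $\QU_P\supseteq\QU_S$ and $\QU_P\supseteq\QU_H$, and every $\QV$ with $\QV\supseteq\QU_S,\QU_H$ satisfies $\QV\supseteq\QU_P$. Then $\QU_P$ is the least upper bound of $\QU_S$ and $\QU_H$ under reverse inclusion, i.e.\ the join. I will establish two facts.
\begin{itemize}
\item Each $R_S$ and each $R_H$ contains $R_P=R_H\cap R_S$. So every element of $B_S$ and of $B_H$ is in the upperset generated by $B_P$, which gives $\QU_S,\QU_H\subseteq\QU_P$.
\item If $\QV$ contains both $\QU_S$ and $\QU_H$, then for every $R\in B$ we have $R_S,R_H\in\QV$. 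Since $\QV$ is a filter, $R_P=R_S\cap R_H\in\QV$, so $\QU_P\subseteq\QV$.
\end{itemize}
Here the order direction needs care. If $\sqsubseteq$ is $\supseteq$, then a larger quasi-uniformity (more entourages) sits lower, and $\QU_P\supseteq\QU_S,\QU_H$ means $\QU_P\sqsubseteq\QU_S,\QU_H$. That describes a meet under $\sqsubseteq$, not a join, so the statement's "join" would have to refer to the opposite ordering.

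By Proposition~\ref{prop:quX:CL}, the meet in that order is generated by the sub-base $\QU_S\cup\QU_H$. Remark~\ref{rmk:quX:CL} then gives the base $\setb{R'\cap R''}{R'\in B_S,R''\in B_H}$ for it. So the real work is to show this base is equivalent to $B_P$. One direction is immediate: $B_P$ is a subset of it. For the other, given $R_S\cap R''_H$ with $R,R''\in B$, choose $R_0\in B$ with $R_0\subseteq R\cap R''$. Monotonicity of $-_S$ and $-_H$ (both are lax-monoidal, hence monotone) gives $(R_0)_P\subseteq R_S\cap R''_H$. This cofinality argument is the main step. I would present the result as the identification of $\QU_P$ with the lattice operation of Proposition~\ref{prop:quX:CL} and Remark~\ref{rmk:quX:CL} that is generated by $\QU_S\cup\QU_H$, matching the paper's convention for $\qj$.
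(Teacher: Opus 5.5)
Your proof is correct and follows the paper's: the first claim is the same unfolding of $(R_H)^o=(R^o)_S$ (you phrase it via bases, the paper via membership). For the second claim you use the finite-meet base of Remark~\ref{rmk:quX:CL}, adding the cofinality step the paper leaves implicit, and you rightly read ``join'' as the least quasi-uniformity containing both $\QU_S$ and $\QU_H$, which is how the lemma is used in Theorem~\ref{thm:PM:qu}. The abandoned paragraph identifying the join with $\QU_S\cap\QU_H$ is not needed (and not justified), so simply drop it.
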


\begin{proof}
  By definition of $\QU_\alpha$, we have, $\forall
  R_0\subseteq\PS(X)^2.R_0\in\QU_\alpha\iff\exists
  R\in\QU.R_\alpha\subseteq R_0$.
  For the first claim we have:
  \begin{itemize}
  \item $R_0^o\in(\QU_H)^o\iff R_0\in\QU_H$ by definition of $-^o$ on
    quasi-uniformities,
  \item $R_0\in\QU_H\iff\exists R\in\QU.(R_H)^o\subseteq R_0^o$ by
    definition of $\QU_H$ and $-^o$ involution on $\PS(\PS(X)^2)$,
  \item $(R_H)^o=(R^o)_S$ by definition of $R_H$.
  \end{itemize}
  Therefore,
  $R_0\in(\QU_H)^o\iff R_0^o\in\QU_H\iff
  \exists R\in\QU.(R^o)_S\subseteq R_0\iff \exists
  R\in\QU^o.R_S\subseteq R_0\iff R_0\in(\QU^o)_S$.

  The second claim follows from Remark~\ref{rmk:quX:CL}, which gives a
  description of a base for the join of finitely many
  quasi-uniformities in $\QUnif_X$.
\end{proof}

\begin{theorem}
  \label{thm:PM:qu}
  For each $\alpha\in\setf{H,S,P}$, the triple $(\PM_\alpha,\eta,-^*)$
  is a lifting of $(\PM,\eta,-^*)$ along $U:\QUnif\rTo\Set$.
\end{theorem}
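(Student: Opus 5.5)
Since $U:\QUnif\rTo\Set$ is faithful and acts on arrows as hom-set inclusion, the three equations in~\eqref{eq:monad:lift} hold automatically once the components are well defined. We set $\PM_\alpha(X,\QU)=(\PM(X),\QU_\alpha)$, $\eta_X(x)=\setf{x}$ and $f^*(A)=\bigcup_{x\in A}f(x)$. It therefore suffices to show two things, for every quasi-uniform space $(X,\QU)$ and every quasi-uniformly continuous $f:(X,\QU)\to(\PM(Y),\QV_\alpha)$. First, $\eta_X:(X,\QU)\to(\PM(X),\QU_\alpha)$ is quasi-uniformly continuous. Second, $f^*:(\PM(X),\QU_\alpha)\to(\PM(Y),\QV_\alpha)$ is quasi-uniformly continuous. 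The monad laws~\eqref{eq:monad} then transfer from $\Set$ by faithfulness. By Proposition~\ref{prop:quX:CL}, each check reduces to a statement about bases: for every basic entourage $R'_\alpha$ (with $R'$ in a base of the codomain) there is a base entourage whose image under $-^\dagger$ of the map is contained in $R'_\alpha$.

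I will handle $\alpha=S$ first. For $\eta_X$, $(x_1,x_2)\in\eta_X^\dagger(R_S)$ iff $\setf{x_2}\subseteq R(\setf{x_1})$ iff $(x_1,x_2)\in R$. Hence $\eta_X^\dagger(R_S)=R\in\QU$.

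For $f^*$, fix $R'\in\QV$. Continuity of $f$ gives $R\in\QU$ with $(x_1,x_2)\in R\implies f(x_2)\subseteq R'(f(x_1))$. We claim $R_S\subseteq (f^*)^\dagger(R'_S)$. Suppose $A_2\subseteq R(A_1)$ and $y\in f^*(A_2)$. Then $y\in f(x_2)$ for some $x_2\in A_2$. There is $x_1\in A_1$ with $(x_1,x_2)\in R$, so $y\in R'(f(x_1))\subseteq R'(f^*(A_1))$.

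The case $\alpha=H$ I will obtain by duality rather than by recomputing. By Lemma~\ref{lem:SH:qu}, $(\QU_H)^o=(\QU^o)_S$, and the involution $-^o$ on $\QUnif$ is the identity on hom-sets (Proposition~\ref{prop:qu}). Thus $f:(X,\QU)\to(\PM(Y),\QV_H)$ is continuous iff $f:(X,\QU^o)\to(\PM(Y),(\QV^o)_S)$ is. The $S$-case then yields continuity of $f^*$ and $\eta$ with respect to $(\QU^o)_S$ and $(\QV^o)_S$, and dualizing back gives the claims for $\QU_H$ and $\QV_H$.

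For $\alpha=P$, Lemma~\ref{lem:SH:qu} gives $\QU_P=\QU_S\qj\QU_H$, which is the meet in the $\qM$-ordering of Proposition~\ref{prop:quX:CL}, with base $\setb{R_S\cap R'_H}{R,R'\in\QU}$. A map into $(\PM(Y),\QV_P)$ is continuous iff it is continuous into both $\QV_S$ and $\QV_H$, since $\QV_P$ is generated by $\QV_S\cup\QV_H$. Moreover $\QV_S,\QV_H\subseteq\QV_P$ in the inclusion sense. Hence continuity of $f$ into $\QV_P$ implies continuity into each of $\QV_S$ and $\QV_H$, so $f^*$ is continuous from $\QU_S$ to $\QV_S$ and from $\QU_H$ to $\QV_H$. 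Then $f^*$ is continuous from $\QU_P\supseteq\QU_S\cup\QU_H$ into both, hence into $\QV_P$. The same argument applies to $\eta$.

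The main obstacle is getting the direction of the duality bookkeeping in the $H$ case right. Concretely, one must make sure that the argument really uses $(\QV^o)_S$ and not $(\QV_S)^o$. If the duality argument becomes confusing, the fallback is a direct computation mirroring the $S$ case. In that computation, $A_1\subseteq R^o(A_2)$ is pushed through $f^*$, using $R$ chosen so that $(x_1,x_2)\in R\implies f(x_1)\subseteq R'^o(f(x_2))$, which is exactly what continuity into $\QV_H$ provides.
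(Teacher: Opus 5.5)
Your proof is correct and has the same structure as the paper's. You check $\eta$ on singletons, compute one of the $S$/$H$ cases directly, get the other through the involution and Lemma~\ref{lem:SH:qu}, and obtain $P$ from $\QU_S,\QU_H\subseteq\QU_P$ together with closure under intersections. The only difference is that the paper computes $H$ directly and derives $S$ by duality, whereas you do the mirror image and also state explicitly that the monad laws transfer by faithfulness of $U$.
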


\begin{proof}
  We have to verify that:
  \begin{enumerate}
  \item $U(\PM_\alpha(X,\QU))=\PM(X)$, which is obvious.
  \item $\eta:X\rTo\PM(X)$ is in $\QUnif((X,\QU),\PM_\alpha(X,\QU))$,
    \ie,
    $\forall R\in\QU.\setb{(x_1,x_2)}{(\setf{x_1},\setf{x_2})\in
      R_\alpha}\in\QU$, which follows from the stronger property
    $\forall R\in\QU.\forall x_1,x_2\in X.(x_1,x_2)\in R\iff
    (\setf{x_1},\setf{x_2})\in R_\alpha$.
  \item If
    $f : \QUnif((X,\QU),\PM_\alpha(X',\QU'))\subseteq\Set(X,\PM(X'))$,
    then $f^* : \QUnif(\PM_\alpha(X,\QU),\PM_\alpha(X',\QU'))$.
  \end{enumerate}

  First, we prove the third property for $\PM_H$, that is, if $f$ is
  quasi-uniformly continuous (\ie,
  $\forall R'\in\QU'.f^\dagger(R'_H)\in\QU$), then so is $f^*$. More
  precisely, we prove that $R_H\subseteq (f^*)^\dagger(R'_H)$ for
  every $R'\in\QU'$, where $R\defeq f^\dagger(R'_H)\in\QU$.

  \begin{itemize}
  \item $(A_1,A_2)\in R_H\iff$ by definition of $\R_H$
  \item $\forall x_1\in A_1.\exists x_2\in A_2.(x_1,x_2)\in R\iff$
    by definition of $R=f^\dagger(R'_H)$
  \item
    $\forall x_1\in A_1.\exists x_2\in A_2.(f(x_1),f(x_2))\in
    R'_H\iff$ by definition of $\R'_H$

\item $\forall x_1\in A_1.\exists x_2\in A_2.\forall y_1\in
    f(x_1).\exists y_2\in f(x_2).(y_1,y_2)\in R'\iff$ by definition of
    $f^*(A)=\cup_{x\in A}f(x)$
  \item $\forall y_1\in f^*(A_1).\exists y_2\in
    f^*(A_2).(y_1,y_2)\in R'\iff$ by definition of $\R'_H$
  \item $(f^*(A_1),f^*(A_2))\in R'_H$.
  \end{itemize}
  Next, we use Lemma~\ref{lem:SH:qu} to prove the third property for
  $\PM_S$ from that for $\PM_H$.
  \begin{itemize}
  \item $f : \QUnif((X,\QU),\PM_S(X',\QU'))\iff$ by definition of
    $\PM_S(X',\QU')$
  \item $f : \QUnif((X,\QU),(\PM(X'),\QU'_S))\iff$ by applying $-^o$
    (and $f^o=f$ as a set-theoretic map)
  \item $f : \QUnif((X,\QU^o),(\PM(X'),(\QU'_S)^o))\iff$
    because $(\QU_H)^o=(\QU^o)_S$
  \item $f : \QUnif((X,\QU^o),(\PM(X'),(\QU')^o)_H))\implies$ by the third
    property for $\PM_H$
  \item $f^* : \QUnif((\PM(X),(\QU^o)_H),(\PM(X'),(\QU')^o)_H))\iff$
    because $(\QU_S)^o=(\QU^o)_H$  (by Lemma~\ref{lem:SH:qu})
  \item $f^* : \QUnif((\PM(X),(\QU_S)^o),(\PM(X'),(\QU'_S)^o))\iff$ by
    applying $-^o$
  \item $f^* : \QUnif((\PM(X),\QU_S),(\PM(X'),\QU'_S))$.
  \end{itemize}
  Finally, for $\PM_P$ we have:
  \begin{itemize}
  \item $f : \QUnif((X,\QU),\PM_P(X',\QU'))\implies$ since
    $\QU_H,\QU_S\subseteq\QU_P$ (by Lemma~\ref{lem:SH:qu})
  \item $f : \QUnif((X,\QU),\PM_H(X',\QU'))\land
    f : \QUnif((X,\QU),\PM_S(X',\QU'))\implies$ as $\PM_H$ and
    $\PM_S$ satisfy the third property
  \item $f^* : \QUnif(\PM_H(X,\QU),\PM_H(X',\QU'))\land
    f^* : \QUnif(\PM_S(X,\QU),\PM_S(X',\QU'))$.
  \end{itemize}
  To conclude that $f^* : \QUnif(\PM_P(X,\QU),\PM_P(X',\QU'))$, we
  show that $\forall R'\in\QU'.(f^*)^\dagger(R'_P)\in\QU_P$:
  \begin{itemize}
  \item $(f^*)^\dagger(R'_P)=$ by definition of $R'_P$ and because
    inverse image preserves intersections
  \item $(f^*)^\dagger(R'_H)\cap(f^*)^\dagger(R'_S)\in\QU_P$ because
    $\QU_P$ is a filter and
    $(f^*)^\dagger(R'_\alpha)\in\QU_\alpha\subseteq\QU_P$ for
    $\alpha=H,S$. \qedhere
  \end{itemize}
  \end{proof}
  
  As already mentioned, the equivalence $\Phi_u:\MS_u\rTo\QUnif$ and
  the monads $\hat{\PM}_\alpha$, for $\alpha\in\{S,H,P\}$, on the
  category $\QUnif$ induce \emph{equivalent} monads on the category
  $\MS_u$.
  We now prove that the Hausdorff-Smyth monad $\hat{\PM}_S$ on $\MS_u$
  defined in
 ~\parencite[Definition~6.7]{Dagnino_Farjudian_Moggi:Robust_Topology:arXiv:2025}
  is a lifting of the monad $\hat{\PM}_S$ on $\QUnif$ along
  $\Phi_u:\MS_u\to\QUnif$.  Recall that the action of $\PM_S$ on
  an object $(X,d,Q):\MS_u$ is $\PM_S(X,d,Q)=(\PS(X),d_S,Q_S)$, where
  $Q_S$ is the quantale on the continuous lattice $\CM(Q)$ of Scott
  closed subsets of $Q$ (see
 ~\parencite[Definition~6.5]{Dagnino_Farjudian_Moggi:Robust_Topology:arXiv:2025})
  such that
\begin{itemize}
\item the  unit is $\hat{\qI}=\cl{\setf{\qI}}=\qLowerSet \qI$ (where $\qI$ is
    the unit of the quantale $Q$),
  \item the tensor product of two Scott closed subsets $A_1$ and $A_2$
    of $Q$ is given by the Scott closure of the pointwise
    multiplication, {\ie},
    $A_1\qT_\CM A_2= \cl{\setb{q_1\qT q_2}{q_1\in A_1\land q_2\in
        A_2}}$,
  \end{itemize}
  and the metric $d_S$ is defined by
  $d_S(A_1,A_2)=\bigcap_{x_2\in A_2}\cl{\setb{d(x_1,x_2)}{x_1 \in
      A_1}}$ ((see
 ~\parencite[Definition~6.7]{Dagnino_Farjudian_Moggi:Robust_Topology:arXiv:2025}).

  \begin{theorem}
    \label{thm:HS_Lifting_PS_monad}
  The Hausdorff-Smyth monad $\hat{\PM}_S$ on $\MS_u$ is a lifting of
  the powerset monad $\hat{\PM}_S$ on $\QUnif$ along the equivalence
  $\Phi_u:\MS_u\rTo\QUnif$.
\end{theorem}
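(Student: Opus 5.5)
The plan is to reduce everything to an equality of quasi-uniformities on $\PS(X)$. The unit and Kleisli extension of both monads are the singleton map and $f^*(A)=\bigcup_{x\in A}f(x)$. Also, $\Phi_u$ acts as hom-set inclusion, is faithful, and both monads are liftings of the same monad on $\Set$ (Theorem~\ref{thm:PM:qu} for $\QUnif$, and \parencite[Definition~6.7]{Dagnino_Farjudian_Moggi:Robust_Topology:arXiv:2025} for $\MS_u$). So the second and third equations of Definition~\ref{def:monad:lift} hold automatically once the first one does. By the Remark after Definition~\ref{def:monad:lift}, it therefore suffices to prove
\[
\Phi_u(\PS(X),d_S,Q_S)=\PM_S(\Phi_u(X,d,Q)),
\quad\text{i.e.}\quad \QU_{d_S}=(\QU_d)_S .
\]
Here $\QU_{d_S}$ has base $\{R^S_\Delta\mid \Delta\ll\hat\qI\}$ with $R^S_\Delta=\{(A_1,A_2)\mid \Delta\ll d_S(A_1,A_2)\}$, and $(\QU_d)_S$ has base $\{(R_\epsilon)_S\mid \epsilon\ll\qI\}$. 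I will show that each base refines the other.

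First I will establish two facts about the continuous lattice $\CM(Q)$, both using that $\twoheaduparrow\epsilon\defeq\{q\mid\epsilon\ll q\}$ is Scott open in a continuous lattice.
\begin{enumerate}
\item[(a)] If $\epsilon\ll u$ in $Q$, then $\qLowerSet\epsilon\ll\qLowerSet u$ in $\CM(Q)$. Indeed, if $\qLowerSet u$ is below the join of a directed family of Scott closed sets, then $u$ lies in the closure of their union. So the open set $\twoheaduparrow\epsilon$ meets the union, hence some member contains a point above $\epsilon$, and therefore contains $\qLowerSet\epsilon$.
\item[(b)] If $\epsilon\ll\epsilon'$ and $\epsilon'\in\cl{D}$, then $\epsilon\ll s$ for some $s\in D$, again because $\twoheaduparrow\epsilon$ is open and meets $\cl D$.
\end{enumerate}
From (a) and interpolation I also get that $\hat\qI=\qLowerSet\qI$ is the directed join of the sets $\qLowerSet\epsilon$ with $\epsilon\ll\qI$. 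Hence every $\Delta\ll\hat\qI$ satisfies $\Delta\subseteq\qLowerSet\epsilon$ for some $\epsilon\ll\qI$.

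\emph{$(\QU_d)_S\subseteq\QU_{d_S}$.} Given $\Delta\ll\hat\qI$, pick $\epsilon\ll\qI$ with $\Delta\subseteq\qLowerSet\epsilon$, and interpolate $\epsilon\ll\epsilon'\ll\qI$ (Proposition~\ref{prop:int:1}). Take $(A_1,A_2)\in(R_{\epsilon'})_S$. For each $x_2\in A_2$ there is $x_1\in A_1$ with $\epsilon'\ll d(x_1,x_2)$, so $\epsilon'\in\cl{\{d(x_1,x_2)\mid x_1\in A_1\}}$. Hence $\qLowerSet\epsilon'\subseteq d_S(A_1,A_2)$. By (a), $\Delta\subseteq\qLowerSet\epsilon\ll\qLowerSet\epsilon'\subseteq d_S(A_1,A_2)$, so $(A_1,A_2)\in R^S_\Delta$. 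Thus $(R_{\epsilon'})_S\subseteq R^S_\Delta$.

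\emph{$\QU_{d_S}\subseteq(\QU_d)_S$.} Given $\epsilon\ll\qI$, interpolate $\epsilon\ll\epsilon'\ll\qI$ and set $\Delta=\qLowerSet\epsilon'$; by (a), $\Delta\ll\hat\qI$. If $(A_1,A_2)\in R^S_\Delta$, then $\epsilon'\in\cl{\{d(x_1,x_2)\mid x_1\in A_1\}}$ for every $x_2\in A_2$. By (b) there is $x_1\in A_1$ with $\epsilon\ll d(x_1,x_2)$, i.e.\ $(A_1,A_2)\in(R_\epsilon)_S$. Thus $R^S_\Delta\subseteq(R_\epsilon)_S$.

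The main obstacle is the order theory of $\CM(Q)$: the description of joins as closures of unions, and facts (a) and (b). These depend on the Scott-openness of $\twoheaduparrow\epsilon$ and on $\CM(Q)$ being a continuous lattice; I would take both from \parencite[Sec.~6]{Dagnino_Farjudian_Moggi:Robust_Topology:arXiv:2025}. After that, the comparison of bases is routine.
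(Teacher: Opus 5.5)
Your proof is correct and follows the paper's route: use faithfulness of $\Phi_u$ to reduce to $\QU_{d_S}=(\QU_d)_S$, then show each base refines the other via interpolation. The only difference is that you derive the needed facts about $\ll$ in $\CM(Q)$ directly from Scott-openness of $\{q\mid\epsilon\ll q\}$, instead of using the paper's base $\setb{\qLowerSet F}{F\subseteq_f B}$ and the characterization $\qLowerSet F\ll C\iff F\subseteq\qwb C$. One harmless slip: your two headings are swapped, since showing $(R_{\epsilon'})_S\subseteq R^S_\Delta$ yields $\QU_{d_S}\subseteq(\QU_d)_S$ and conversely, but you prove both inclusions, so the equality stands.
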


\begin{proof}
  Since the equivalence $\Phi_u$ is faithful, it suffices to prove
  that $\Phi_u(\PM_S(X,d,Q))=\PM_S(\Phi_u(X,d,Q))$, \ie,
  $\QU_{d_S}=(\QU_d)_S$, for every $(X,d,Q)\in\MS_u$.
  Recall that:
  \begin{enumerate}
     \item If $B$ is a base for $Q$, then a possible base for $\CM(Q)$ is
     $B_C\defeq\setb{\qLowerSet F}{F\subseteq_f B}$ and $\qLowerSet
     F\in\qwb_{B_C} C\iff F\subseteq_f\bigcup\setb{\qwb_B q}{q\in C}$.

   \item The following properties hold in the continuous lattice $\CM(Q)$:
     \begin{itemize}
     \item $\forall C\in\CM(Q).C=\cl{\qwb C}$, where $\qwb
       A\defeq\bigcup\setb{\qwb q}{q\in A}\subseteq\qLowerSet
       A\subseteq\cl{A}$ for $A\subseteq Q$;
     \item $\qwb(\bigcap\setb{C_i}{i\in I})=\qwb(\bigcap\setb{\qwb
       C_i}{i\in I})$, where $C_i\in\CM(Q)$ for every $i\in I$;
     \item $\forall F\subseteq_f Q.\forall C\in\CM(Q).\qLowerSet F\ll
       C\iff F\subseteq\qwb C$.
     \end{itemize}
     Thus, $\qLowerSet F\in\qwb_{B_C}d_S(A_1,A_2)\iff
     F\subseteq_f\qwb_B(\bigcap\setb{\qwb_B\setb{d(x_1,x_2)}{x_1 \in
         A_1}}{x_2\in A_2})$.

   \item The quasi-uniformity $\QU_d$ is generated by the base
     $B^u_d=\setb{R_\delta}{\delta\in\qwb_B\qI}$, where $B$ is a base
     for $Q$ and $R_\delta=\setb{(x_1,x_2)\in
       X^2}{\delta\ll d(x_1,x_2)}$.
     Thus, using the base $B_C$ for $\CM(Q)$ given in the previous
     item, we have that $\QU_{d_S}$ is generated by
     $(B_C)^u_{d_S}=\setb{R_{\qLowerSet F}}{F\subseteq_f\qwb_B\qI}$, where
     $R_{\qLowerSet F}=\setb{(A_1,A_2)\in\PS(X)^2}{\qLowerSet F\ll
       d_S(A_1,A_2)}$.

  \item If $B$ is a base for a quasi-uniformity $\QU$ on $X$, then the
    quasi-uniformity $\QU_S$ on $\PS(X)$ is generated by the base
    $B_S\defeq\setb{R_S}{R\in B}$, where
    $R_S\defeq\setb{(A_1,A_2)\in\PM(X)^2}{\forall x_2\in A_2.\exists
      x_1\in A_1.(x_1,x_2)\in R}$.
    Thus, $(\QU_d)_S$ is generated by the base
    $(B^u_d)_S=\setb{(R_\delta)_S}{\delta\in\qwb_B\qI}$, where $B$ is
    base for $Q$.
  \end{enumerate}
  To prove that $\QU_{d_S}=(\QU_d)_S$, it suffices to show that their
  bases $(B_C)^u_{d_S}$ and $(B^u_d)_S$ satisfy the following:
  \begin{itemize}
  \item For every $F\subseteq_f\qwb_B\qI$, the entourage
    $R_{\qLowerSet F}=\setb{(A_1,A_2)\in\PS(X)^2}{\qLowerSet F\ll
      d_S(A_1,A_2)}\in (B_C)^u_{d_S}$ contains the entourage
    $(R_\delta)_S=\setb{(A_1,A_2)\in\PM(X)^2}{\forall x_2\in
      A_2.\exists x_1\in A_1.\delta\ll d(x_1,x_2)}\in (B^u_d)_S$ for
    some $\delta\in\qwb_B\qI$.
    
    If $F\subseteq_f\qwb_B\qI$, then $\qJ F\in\qwb_B\qI$, because $B$
    is closed under finite joins and $\qwb_B\qI$ is directed.  By the
    interpolation property for $\ll$, there exists $\delta\in B$ such
    that $\qJ F\ll\delta\ll\qI$.  We claim that such a $\delta$ has the
    required property.
    In fact, $\forall x_2\in A_2.\exists x_1\in A_1.\delta\ll
    d(x_1,x_2)\implies\delta\in d_S(A_1,A_2)$, thus $\qLowerSet
    F\subseteq\qLowerSet\setf{\qJ
      F}\ll\qLowerSet\setf{\delta}\subseteq d_S(A_1,A_2)$.

     \item For every $\delta\in\qwb_B\qI$, the entourage
       $(R_\delta)_S=\setb{(A_1,A_2)\in\PM(X)^2}{\forall x_2\in
         A_2.\exists x_1\in A_1.\delta\ll d(x_1,x_2)}\in (B^u_d)_S$
       contains the entourage
       $R_{\qLowerSet F}=\setb{(A_1,A_2)\in\PS(X)^2}{\qLowerSet F\ll
         d_S(A_1,A_2)}\in B^u_{d_S}$ for some $F\subseteq_f\qwb_B\qI$.

    We claim that $F=\setf{\delta}$ has the required property.  In
    fact, we have the following chain on implications
    $\qLowerSet\setf{\delta}\in\qwb_{B_C} d_S(A_1,A_2)\iff
    \delta\in\qwb_B d_S(A_1,A_2)\implies\forall x_2\in A_2.\exists
    x_1\in A_1.\delta\ll d(x_1,x_2)$. \qedhere
\end{itemize}
\end{proof}

Since the functor $U:\QUnif\to\Set$ factors through $\Top$ and $\Po$,
one may wonder whether the monads $\PM_\alpha$, for
$\alpha\in\{S,H,P\}$, on $\QUnif$ arise as liftings along
$T:\QUnif\to\Top$ or $P\circ T:\QUnif\to\Po$ of other liftings of the
powerset monad on $\Set$ along $U:\Top\to\Set$ or $U:\Po\to\Set$.  We
already know from~\parencite[Example
4.18]{Dagnino_Farjudian_Moggi:Robust_Topology:arXiv:2025}, however,
that this is not the case for $T : \QUnif\to\Top$ because there are
metric spaces, and thus quasi-uniform spaces, inducing the same
topology which are mapped by $\PM_\alpha$ to metric spaces inducing
different topologies.
The next example shows that something similar happens for $P\circ
T:\QUnif\to\Po$.

\begin{example}\label{ex:not-factor}
  We define two quasi-uniformities $\QU$ and $\QV$ on $\omega$ having
  the same specialization preorder, \ie, $\leq_\QU=\leq_\QV$, while
  the quasi-uniformities $\QU_S$ and $\QV_S$ on $\PS(\omega)$ have
  different specialization preorders.
  Let $B = \setb{R_n\subseteq \omega^2}{n>0}$, where
  $R_n \defeq \Delta_\omega \cup
  \setb{(x_1,x_2)\in\omega^2}{x_2=0\land x_1\geq n}$ for every
  $n\in\omega$ Then, $B$ is a base for a quasi-uniformity $\QU$ on
  $\omega$.  Since $\bigcap B = \Delta_\omega$, we have
  $\leq_\QU=\Delta_\omega$, \ie, the equality relation on $\omega$.
  Let $\QV$ be the principal filter in $\PS(\omega^2)$ generated by
  $\Delta_\omega$, and therefore $\leq_\QV=\Delta_\omega$.  It is easy
  to see that $P(T(\PM_S(\omega,\QV)))=(\PS(\omega),\leq_{\QV_S}) =
  (\PS(\omega),\supseteq)$.
  On the other hand,
  $P(T(\PM_S(\omega,\QU)))=(\PS(\omega),\leq_{\QU_S})$, where $\QU_S$
  is generated by $B_S = \setb{(R_n)_S\subseteq \PS(\omega)^2}{n>0}$
  with $(A_1,A_2)\in (R_n)_S \defiff (A_2\setminus\setf{0})\subseteq A_1
  \land (0\in A_2\implies \exists x\in A_1.(x=0\lor x\geq n))$.
  In particular, $(\setb{n}{n>0},\setf{0})\in \bigcap B_S$, \ie,
  $\setb{n}{n>0}\leq_{\QU_S}\setf{0}$, while
  $\setb{n}{n>0}\leq_{\QV_S}\setf{0}$ is false.
\end{example}

By general considerations on the relation between monads and
adjunctions, an adjunction
$\begin{tikzcd}[row sep =large, column sep = huge]
  \A \arrow[r, "R"'] & \B \arrow[l,bend right,dashed,"\bot","L"']
\end{tikzcd}$
induces a \emph{monad transformer} turning every monad on $\A$ into a
monad on $\B$ (see~\parencite{JMoggi2010monad}).  Since $\Top$ and $\Po$
are coreflective sub-categories of $\QUnif$, the monads
$\hat{\PM}_\alpha$, for $\alpha\in\{S,H,P\}$, induce monads
$\hat{\PM}_\alpha$ on $\Top$ and $\Po$ whose action on objects are
\begin{itemize}
\item $\PM_\alpha(X,\tau) \defeq T(\PM_\alpha(L_T(X,\tau)))
  =T(\PS(X),(\QU_\tau)_\alpha)$ and
\item $\PM_\alpha(X,\leq)\defeq
  (P\circ T)(\PM_\alpha((L_T\circ L_P)(X,\leq)))
  = P(T(\PS(X),(\QU_\leq)_\alpha))$.
\end{itemize}
We prove that the monads $\hat{\PM}_\alpha$ on $\Po$ are the
restrictions of the corresponding monad on $\QUnif$ along the
$(P\circ T)$-section $L_T\circ L_P:\Po\to\QUnif$, which allows viewing
$\Po$ as a full sub-category of $\QUnif$.
\begin{lemma}
  If $(X,\leq)\in\Po$ and $-_\alpha:\PS(X^2)\to\PS(\PS(X)^2)$ is any
  of the maps introduced in Definition~\ref{def:PM:QU}, then
  $(\PS(X),\leq_\alpha)\in\Po$.
\end{lemma}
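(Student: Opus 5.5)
The claim is that $\leq_\alpha$, the image of the preorder $\leq$ under $-_\alpha$, is again a preorder on $\PS(X)$. The plan is to read this off from lax-monoidality, via Proposition~\ref{prop:lax}. A relation $R\in\PS(X^2)$ is a preorder exactly when
\[
\Delta\subseteq R \quad\text{and}\quad R\qT R\subseteq R ,
\]
that is, when $\qT_{i\in 0}R\subseteq R$ and $\qT_{i\in 2}R\subseteq R$ hold in the quantale $(\PS(X^2),\subseteq,\qT,\Delta)$ of Proposition~\ref{prop:BR}. Proposition~\ref{prop:lax} says these inequalities are preserved by any lax-monoidal map. So it suffices to know that each $-_\alpha:\PS(X^2)\to\PS(\PS(X)^2)$ is lax-monoidal, and then apply the preservation with $q_i=q=\leq$ and $n\in\{0,2\}$.

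Lax-monoidality is checked case by case.
\begin{itemize}
\item For $\alpha=S$ it is Proposition~\ref{prop:BR:liftS}.
\item For $\alpha=H$, we have $R_H=((R^o)_S)^o$. So $-_H$ is the map $F^o$ built from $F=-_S$, and it is lax-monoidal by the second item of Proposition~\ref{prop:BR:main}.
\item For $\alpha=P$, we have $R_P=R_H\cap R_S$, the pointwise intersection of two lax-monoidal maps. It is lax-monoidal by the first item of Proposition~\ref{prop:BR:main}.
\end{itemize}
This is exactly the observation already used in the proof of Proposition~\ref{prop:PM:QU}.

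With this in hand, reflexivity follows from $\Delta_{\PS(X)}\subseteq(\Delta_X)_\alpha\subseteq(\leq)_\alpha$. The first inclusion is the unit part of lax-monoidality. The second is monotonicity applied to $\Delta_X\subseteq{\leq}$.

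Transitivity follows from the chain
\[
\leq_\alpha\qT\leq_\alpha\ \subseteq\ (\leq\qT\leq)_\alpha\ \subseteq\ \leq_\alpha .
\]
The first inclusion is the multiplicative part of lax-monoidality. The second is monotonicity applied to ${\leq\qT\leq}\subseteq{\leq}$.

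There is no real obstacle here. The only point needing care is the bookkeeping for $\alpha=H$: one must confirm that $(-)^o$ of a lax-monoidal map is lax-monoidal for the original composition $\qT$, not for $\qT^o$, and this is exactly what Proposition~\ref{prop:BR:main} provides.
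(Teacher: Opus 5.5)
Your proposal is correct and matches the paper's proof: the paper likewise notes that each $-_\alpha$ is lax-monoidal by Propositions~\ref{prop:BR:liftS} and~\ref{prop:BR:main}, and then uses Proposition~\ref{prop:lax} to transfer reflexivity and transitivity of $\leq$ to $\leq_\alpha$. Your case split for $H$ (as $F^o$ with $F=-_S$) and for $P$ (as a pointwise intersection), together with the two explicit inclusion chains, just spells out what the paper's one-line argument leaves implicit.
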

\begin{proof}
  Since each $-_\alpha:\PS(X^2)\to\PS(\PS(X)^2)$ is a lax-monoidal map
  (see Propositions~\ref{prop:BR:liftS}~and~\ref{prop:BR:main}), and
  reflexivity and transitivity are among the properties of binary
  relations preserved by such maps (see Proposition~\ref{prop:lax}),
  if $\leq$ is a preorders then so is $\leq_\alpha$.
\end{proof}

\begin{remark}
  We call $\leq_S$, $\leq_H$ and $\leq_P$ the Smyth, Hoare, and
  Plotkin preorders, respectively.  We adopt these names due to the
  similarity of these preorders with the relations on abstract bases
  used to define Smyth, Hoare, and Plotkin powerdomains (see
 ~\parencite[Proposition~6.2.9 and Theorem~6.2.10]{AbramskyJung94-DT}).
\end{remark}

\begin{theorem}
  \label{thm:monads_Po_QUnif}
For every $(X,\leq)\in\Po$ and $\alpha\in\{S,H,P\}$, we have
$\PM_\alpha(X,\QU_\leq) =(\PS(X),\QU_{\leq_\alpha})$.
\end{theorem}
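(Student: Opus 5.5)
By Proposition~\ref{prop:aqu}, $\QU_\leq=\upperSetOf{\leq}$ is the principal filter generated by $\leq$. Hence the singleton $\setf{\leq}$ is a base for $\QU_\leq$, as Proposition~\ref{prop:qu:base} confirms: $\leq$ is reflexive and $\leq^2\subseteq\leq$. The plan is to use this one-element base in Definition~\ref{def:PM:QU}. The base $B_\alpha$ for $(\QU_\leq)_\alpha$ then becomes the singleton $\setf{\leq_\alpha}$, where $\leq_\alpha$ is the image of $\leq$ under $-_\alpha$. Proposition~\ref{prop:PM:QU} tells us $(\QU_\leq)_\alpha$ does not depend on the chosen base. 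So $(\QU_\leq)_\alpha$ is the upperset in $\PS(\PS(X)^2)$ generated by $\leq_\alpha$, that is, $(\QU_\leq)_\alpha=\upperSetOf{\leq_\alpha}$.

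The preceding lemma shows that $(\PS(X),\leq_\alpha)$ is a preorder. Applying Proposition~\ref{prop:aqu} to this preorder gives $\QU_{\leq_\alpha}=\upperSetOf{\leq_\alpha}$. Comparing the two descriptions yields $\PM_\alpha(X,\QU_\leq)=(\PS(X),\upperSetOf{\leq_\alpha})=(\PS(X),\QU_{\leq_\alpha})$, which is the claim.

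Almost everything here is bookkeeping. The one place that needs a moment's care is the base-independence step for a singleton base. Proposition~\ref{prop:PM:QU} relies only on the lax-monoidality of $-_\alpha$, which holds by Propositions~\ref{prop:BR:liftS} and~\ref{prop:BR:main}; since $H$ and $P$ are built from $S$ through $-^o$ and intersections, they are covered as well. The upperset generated by a single relation is exactly its principal filter, so no closure under finite intersections has to be checked. As a corollary, $P(T(\PM_\alpha(X,\QU_\leq)))=(\PS(X),\leq_\alpha)$ by Proposition~\ref{prop:qu:sp}, which identifies the induced monads on $\Po$ with the Smyth, Hoare and Plotkin preorders.
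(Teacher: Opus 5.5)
Your proposal is correct and follows essentially the same route as the paper: you use Proposition~\ref{prop:aqu} to obtain the singleton base $\setf{\leq}$ for $\QU_\leq$, push it through $-_\alpha$ to get the singleton base $\setf{\leq_\alpha}$ for $(\QU_\leq)_\alpha$, and identify the result with $\QU_{\leq_\alpha}$. The only difference is that you spell out the final step, applying the preceding lemma and then Proposition~\ref{prop:aqu} to the preorder $\leq_\alpha$, which the paper leaves implicit.
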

\begin{proof}
  By Proposition~\ref{prop:aqu}, $\QU_\leq=\upperSetOf{\leq}$. Hence,
  $B=\setf{\leq}$ is a base for the quasi-uniformity $\QU_\leq$.
  By Definition~\ref{def:PM:QU}, $B_\alpha=\setf{\leq_\alpha}$ is a
  base for the quasi-uniformity $(\QU_\leq)_\alpha$.  Therefore,
  $(\QU_\leq)_\alpha=\QU_{\leq_\alpha}$.
\end{proof}

\section{Concluding Remarks}
\label{sec:concluding_remarks}

We have established several relations, at a category-theoretic level,
between quasi-uniform spaces, topological spaces, and quantale-valued
metric spaces.  In particular, we have identified the category of
quasi-uniform spaces and quasi-uniformly continuous maps as the
appropriate qualitative counterpart of the category of quantale-valued
metrics and uniformly continuous maps, thus providing a framework for
qualitative robustness analysis. We have also shown that the
Hausdorff-Smyth monad on quantale-valued metric spaces arises as a
lifting of a corresponding powerset-like monad on quasi-uniform spaces.

The present treatment has deliberately ignored order-enriched
structure. The category $\QUnif$ is naturally $\Po$-enriched, and the
Hausdorff-Smyth, Hausdorff-Hoare, and Hausdorff-Plotkin monads are
$\Po$-enriched as well. Incorporating this enrichment would make it
possible to apply the framework developed
in~\parencite[Section~5]{Dagnino_Farjudian_Moggi:Robust_Topology:arXiv:2025}
and obtain separated variants of the corresponding
monads. Furthermore, if $\A$ is a $\Po$-enriched category, one may
define $\A^o(X,Y)=\A(X,Y)^o$, that is, by reversing the preorder on
each hom-set. In this setting, the involution functor $\-^o$ on $\Po$
becomes a $\Po$-enriched functor from $\Po$ to $\Po^o$.

A natural direction for future work is the development of
suitable effective structures for quasi-uniform spaces and
quantale-valued metric spaces. Such structures would be needed to
connect the qualitative framework developed here with computability
and effective robustness analysis.

We do not know whether the counterpart of
Theorem~\ref{thm:monads_Po_QUnif} holds for $\Top$ or not. It will be
interesting to determine whether the three monads induced on $\Top$ by
those on $\QUnif$ are related to any well-known monads on $\Top$.



\printbibliography

@string{fss="Fuzzy Sets and Systems"}

@article{Manes1976,
  title={Algebraic theories in a category},
  author={Manes, Ernest G and Manes, Ernest G},
  journal={Algebraic Theories},
  pages={161--279},
  year={1976},
  publisher={Springer}
}

@book{Hart_et_al:Encyclopedia_Topology:2003,
  title={Encyclopedia of general topology},
  author={Hart, Klaas Pieter and Nagata, Jun-iti and Vaughan, Jerry E.},
  year={2003},
  publisher={Elsevier}
}

@book{borceux1994handbook1,
  title={Handbook of categorical algebra: Basic category theory},
  author={Borceux, Francis},
  volume={1},
  year={1994},
  publisher={Cambridge University Press}
}

@book{borceux1994handbook2,
  title={Handbook of categorical algebra: Categories and structures},
  author={Borceux, Francis},
  volume={2},
  year={1994},
  publisher={Cambridge University Press}
}

@article{Moggi:notions_monads:1991,
  title={Notions of computation and monads},
  author={Moggi, Eugenio},
  journal={Information and computation},
  volume={93},
  number={1},
  pages={55--92},
  year={1991},
  publisher={Elsevier}
}

@article{JMoggi2010monad,
  title={Monad transformers as monoid transformers},
  author={Jaskelioff, Mauro and Moggi, Eugenio},
  journal={Theoretical computer science},
  volume={411},
  number={51-52},
  pages={4441--4466},
  year={2010},
  publisher={Elsevier}
}

@Article{Moggi_Farjudian_Duracz_Taha:Reachability_Hybrid:2018,
  author = 	 {Eugenio Moggi and Amin Farjudian and Adam Duracz and
                  Walid Taha},
  title = 	 {Safe \& Robust Reachability Analysis of Hybrid Systems},
  fjournal = 	 {Theoretical Computer Science},
  journal = 	 {Theor. Comput. Sci.},
  volume = "747",
  pages = "75--99",
  year = 	 {2018},
  doi =          {10.1016/j.tcs.2018.06.020}
}

@article{lawvere1973metric,
  title={Metric spaces, generalized logic, and closed categories},
  author={Lawvere, F.~William},
  journal={Rendiconti del seminario mat{\'e}matico e fisico di Milano},
  volume={43},
  pages={135--166},
  year={1973},
  publisher={Springer},
  note={Reprints in Theory and Applications of Categories, No. 1, 1-37 (2002)}
}

@Book{Goubault-Larrecq:Non_Hausdorff_topology:2013,
  author = {Goubault-Larrecq, Jean},
  title = {Non-Hausdorff topology and domain theory},
  OPTseries = {New mathematical monographs: 22},
  publisher = {Cambridge University Press},  
  OPTisbn = {1107034132},
  year = {2013},
  OPTlanguage = {eng},
  OPTaddress = {Cambridge},  
  OPTkeywords = {Topology -- Problems exercises etc; Topology},
  OPTlccn = {2013427187}
}

@incollection{AbramskyJung94-DT,
  author =	 {Samson Abramsky and Achim Jung},
  booktitle =	 {Handbook of Logic in Computer Science},
  title =	 {Domain Theory},
  publisher =	 {Clarendon Press},
  address =      {Oxford},
  pages =	 {1--168},
  year =	 {1994},
  editor =	 {S. Abramsky and D. M. Gabbay and T. S. E. Maibaum},
  volume =	 {3}
}

@article{smyth1977effectively,
  title={Effectively given domains},
  author={Smyth, Michael B.},
  fjournal={Theoretical Computer Science},
  journal={Theor. Comput. Sci.},
  volume={5},
  number={3},
  pages={257--274},
  year={1977},
  publisher={Elsevier}
}

@Book{Gierz-ContinuousLattices-2003,
  author = 	 {Gerhard Gierz and Karl Heinrich Hofmann and Klaus Keimel and Jimmie D. Lawson and Michael W. Mislove and Dana S. Scott},
  title = 	 {Continuous Lattices and Domains},
  publisher = 	 {Cambridge University Press},
  year = 	 {2003},
  volume = 	 {93},
  series = 	 {Encycloedia of Mathematics and its Applications}
}

@article{CookW21,
  author       = {Derek S. Cook and
                  Ittay Weiss},
  title        = {The topology of a quantale valued metric space},
  journal      = fss,
  volume       = {406},
  pages        = {42--57},
  year         = {2021},
  doi          = {10.1016/j.fss.2020.06.005},
}

@Article{Farjudian_Moggi:Robustness_Scott_Continuity_Computability:2023,
  author = 	 {Amin Farjudian and Eugenio Moggi},
  title = 	 {Robustness, {Scott} Continuity, and Computability},
  journal={Mathematical Structures in Computer Science},
  DOI={10.1017/S0960129523000233},
  year = 	 {2023},
  pages={1–37}
}

@Article{Flagg:Quantales_continuity_spaces:1997,
  author = 	 {Flagg, R.C.},
  title = 	 {Quantales and continuity spaces},
  journal = 	 {Algebra Universalis},
  year = 	 {1997},
  OPTkey = 	 {},
  volume = 	 {37},
  number = 	 {3},
  pages = 	 {257--276},
  OPTmonth = 	 {},
  OPTnote = 	 {},
  OPTannote = 	 {},
  doi =          {10.1007/s000120050018}
}

@Book{Fletcher_Lindgren:Quasi_Uniform:Book:1982,
  author={Peter Fletcher and William F. Lindgren},
  title = 	 {Quasi-uniform spaces},
  publisher = 	 {Marcel Dekker},
  year = 	 {1982}
}

@article{Kopperman:All_topologies_metric:1988,
  title={All topologies come from generalized metrics},
  author={Kopperman, Ralph},
  journal={The American Mathematical Monthly},
  volume={95},
  number={2},
  pages={89--97},
  year={1988},
  publisher={Taylor \& Francis}
}

@phdthesis{gavazzo2019phd,
  title={Coinductive equivalences and metrics for higher-order languages with algebraic effects},
  author={Gavazzo, Francesco},
  school={University of Bologna, Italy},
  year={2019},
  publisher={alma}
}

@article{Raney:Subdirect:1953,
  title={A subdirect-union representation for completely distributive complete lattices},
  author={George N. Raney},
  journal={Proceedings of the American Mathematical Society},
  volume={4},
  number={4},
  pages={518--522},
  year={1953},
  OPTpublisher={JSTOR}
}

@article{RodriguezLopez_Romaguera:Vietoris_Hausdorff:2002,
  author={Rodr{\'\i}guez-L{\'o}pez, Jes{\'u}s and Romaguera, Salvador},
  title = {The relationship between the {Vietoris} topology and the {Hausdorff} quasi-uniformity},
  journal = {Topology and its Applications},
  volume = {124},
  number = {3},
  pages = {451--464},
  year = {2002},
  OPTissn = {0166-8641},
  doi = {10.1016/S0166-8641(01)00252-8}
}

@misc{Dagnino_Farjudian_Moggi:Robust_Topology:arXiv:2025,
  author = 	 {Francesco Dagnino and Amin Farjudian and Eugenio Moggi},
  title = 	 {Robust Topology and the {Hausdorff-Smyth} Monad on
                  Metric Spaces over Continuous Quantales},
  year={2025},
  eprint={2508.11623},
  archivePrefix={arXiv},
  primaryClass={cs.LO},
  OPTurl = {https://arxiv.org/abs/2508.11623},
  OPTdoi = {10.48550/arXiv.2508.11623}
}

@book{Johnstone86,
  title = {Stone spaces},
  publisher = {Cambridge University Press},
  year = {1986},
  author = {Johnstone, Peter T.},
  volume = {3},
  OPTpages = {xxii+370},
  series = {Cambridge Studies in Advanced Mathematics},
  address = {Cambridge},
  isbn = {0-521-33779-8},
}

@article{Vickers:Infosys:1993,
  author = {Steven Vickers},
  title = {Information systems for continuous posets},
  journal = {Theoretical Computer Science},
  volume = {114},
  number = {2},
  pages = {201--229},
  year = {1993},
  OPTissn = {0304-3975},
  doi = {10.1016/0304-3975(93)90072-2}
}

@Book{Willard:General_Topology:Book:1970,
  author={Stephen Willard},
  title = 	 {General Topology},
  publisher = 	 {Addison-Wesley},
  year = 	 {1970}
}

@book{Kelley1975,
  author    = {Kelley, John L.},
  title     = {General Topology},
  publisher = {Springer-Verlag},
  year      = {1975},
  address   = {New York, NY},
  series    = {Graduate Texts in Mathematics},
  volume    = {27}
}

\end{document}